\providecommand{\pgfsyspdfmark}[3]{}
\newtheorem{thm}{Theorem}[section]
\newtheorem{prop}[thm]{Proposition}
\newtheorem{lemma}[thm]{Lemma}
\newtheorem{cor}[thm]{Corollary}
\newtheorem*{thm*}{Theorem}
\newtheorem*{cor*}{Corollary}
\newtheorem*{prop*}{Proposition}
\theoremstyle{definition}
\newtheorem{defn}[thm]{Definition}
\theoremstyle{remark}
\newtheorem{remark}[thm]{Remark}
\newtheorem{eg}[thm]{Example}
\numberwithin{equation}{section}
\newcommand{\be}{\begin{equation}}
\newcommand{\ee}{\end{equation}}
\def\ba{\begin{eqnarray*}}
\def\ea{\end{eqnarray*}}
\newcommand{\bi}{\begin{itemize}}
\newcommand{\ei}{\end{itemize}}
\newcommand{\bn}{\begin{enumerate}}
\newcommand{\en}{\end{enumerate}}
\newcommand{\bbm}{\begin{bmatrix}}
\newcommand{\ebm}{\end{bmatrix}}
\newcommand{\bpm}{\begin{pmatrix}}
\newcommand{\epm}{\end{pmatrix}}
\newcommand{\bsm}{\left ( \begin{smallmatrix}}
\newcommand{\esm}{\end{smallmatrix} \right) }
\newcommand{\mr}{\ensuremath{\mathrm}}
\newcommand{\scr}{\ensuremath{\mathscr}}
\newcommand{\ov}{\ensuremath{\overline}}
\newcommand{\sm}{\ensuremath{\setminus}}
\newcommand{\wt}{\ensuremath{\widetilde}}
\newcommand{\ga}{\ensuremath{\gamma}}
\newcommand{\Om}{\ensuremath{\Omega}}
\newcommand{\La}{\ensuremath{\Lambda }}
\newcommand{\la}{\ensuremath{\lambda }}
\newcommand{\om}{\ensuremath{\omega}}
\def\C{\mathbb{C}}
\def\D{\mathbb{D}}
\def\Z{\mathbb{Z}}
\def\N{\mathbb{N}}
\def\B{\mathbb{B}}
\def\fr{\mathfrak{r}}
\def\fz{\mathfrak{z}}
\def\fp{\mathbb{C} \langle \fz \rangle }
\def\fps{\mathbb{C} \langle \! \langle  \fz  \rangle \! \rangle}
\def\onefps{\mathbb{C} [ \! [ \fz ] \! ]}
\def\mrt{\mathrm{t}}
\def\hardy{\mathbb{H} ^2 _d}
\def\mult{\mathbb{H} ^\infty _d}
\newcommand{\ip}[2]{\ensuremath{\langle {#1} , {#2} \rangle}}
\def\nbdom{\mr{Dom} \, }
\def\nbran{\mr{Ran} \, }
\def\nn{\nonumber}
\def\fskew{\C \ \mathclap{\, <}{\left( \right.}   \fz  \mathclap{  \, \, \, \, \, >}{\left. \right)} \, \, }
\def\ratfps{\C _0 \ \mathclap{\, <}{\left( \right.}  \fz  \mathclap{ \, \, \, \, \, >}{\left. \right)} \, \, }
\def\fskewd{\C \ \mathclap{\, <}{\left( \right.}   \fz _1, \cdots, \fz _d  \mathclap{  \, \, \, \, \, >}{\left. \right)} \, \, }
\def\fskewk{\C \ \mathclap{\, <}{\left( \right.}   \fz _1, \cdots, \fz _k  \mathclap{  \, \, \, \, \, >}{\left. \right)} \, \, }
\def\cdn{\mathbb{C} ^{(n\times n)\cdot d}}
\def\cdm{\mathbb{C} ^{(m\times m)\cdot d}}
\def\cH{\mathcal{H}}
\def\cJ{\mathcal{J}}
\def\F{\mathbb{F} ^+ _d}
\def\Fn{\mathbb{F} ^+ _n}
\def\ncu{\mathbb{C} ^{(\N \times \N) \cdot d}}
\def\cball{\mathbb{B} ^{(\N \times \N) \cdot d}}
\def\bdn{\mathbb{B} ^{(n\times n) \cdot d}}
\def\bdm{\mathbb{B} ^{(m\times m) \cdot d}}
\def\cint{\ointctrclockwise}
\title{Operator realizations about a matrix-centre}
\author{Ali Karoobi}
\author{Robert T.W. Martin\thanks{Supported by NSERC grant 2020-05683}}
\author{Maximilian Tornes}
\affil{\footnotesize University of Manitoba}
\date{}
\begin{document}
\maketitle
\vspace{-.75cm}

\begin{abstract}
We develop a general theory of operator realizations, or ``linear representations" of analytic functions in several non-commuting variables about a matrix--centre. In particular we show that a non-commutative function has a matrix-centre realization about any matrix tuple, $Y$, in its domain, if and only if it is a uniformly analytic non-commutative function defined in a uniformly open neighbourhood of $Y$. This extends the finite--dimensional realization theory of non-commutative rational functions maximally -- to all uniformly analytic non-commutative functions. 
\end{abstract}

\section{Introduction}

A univariate \emph{realization} is a triple, $(A,b,c) \in \scr{B} (\cH) \times \cH \times \cH$ where $\cH$ is a complex, separable or finite--dimensional Hilbert space and $\scr{B} (\cH)$ denotes the Banach space of bounded linear operators on $\cH$. Any realization defines an analytic function in an open neighbourhood of $0$ by the realization formula,
$$ h (z) := b^* (I -zA) ^{-1} c, $$ and we write $h \sim (A,b,c)$ if $h$ is defined by the realization in this way. Here, if $|z| < \| A \| ^{-1}$, then the \emph{linear pencil}, $L_A (z) := (I-zA) ^{-1}$ can be expanded as an operator-norm convergent geometric series so that 
$$ h(z) = \sum _{n=0} ^\infty b^* A^n c \, z^n; \quad \quad |z| < \| A \| ^{-1}, $$ is an absolutely convergent Taylor series. Conversely, an elementary argument shows that a one-variable formal power series, $h \in \onefps$, is the Taylor series of an analytic function in an open neighbourhood of $0$ if and only if it has a realization, $h \sim (A,b,c)$, in which case we say that $h$ is \emph{familiar} \cite[Lemma 3.1]{AMS-opreal}.

Realizations are also naturally defined for analytic functions of several non-commuting variables. Namely, let $f \in \fps := \C \langle \! \langle \, \fz _1, \cdots, \fz _d \rangle \! \rangle$ denote a \emph{free formal power series} (free FPS), \emph{i.e.} a formal power series in the several non-commuting formal variables $\fz := (\fz _1, \cdots, \fz _d )$. That is,
$$ f (\fz) := \sum _{\om \in \F} \hat{f} _\om \, \fz ^\om; \quad \quad \hat{f} _\om \in \C, $$
where $\F$ denotes the \emph{free monoid}, the set of all \emph{words}, $\om = i_1 \cdots i_n$, in the letters $i _k \in \{ 1, \cdots, d \}$, with product given by concatenation of words, and with unit, $\emptyset$, the \emph{empty word}, consisting of no letters. The \emph{free monomials}, $\fz ^\om$ are then defined as $\fz ^\om = \fz _{i_1} \cdots \fz _{i_n}$, and $\fz ^\emptyset =: 1$. Given $\om = i_1 \cdots i_n \in \F$ we further define the \emph{length} of $\om$ as $|\om| := n$, the number of letters it contains, and the \emph{transpose}, $\om ^\mrt$ of $\om$ as $\om ^\mrt := i_n \cdots i_1$, an involution on the free monoid. As first proven by G.F. Popescu, one can extend the Cauchy--Hadamard radius of convergence formula to free FPS: 
$$ \frac{1}{R_f} := \limsup _{n\rightarrow \infty} \sqrt[2n]{ \sum _{|\om | = n } | \hat{f} _\om | ^2}. $$ If $R_f >0$, then by \cite[Theorem 1.1]{Pop-freeholo}, the free FPS, $f$, can be evaluated at any $d-$tuple $X := (X_1, \cdots, X_d) \in \scr{B} (\cH ) ^{1\times d}$, provided that 
$$ \| X \| _{\mr{row}} := \| X \| _{\scr{B} (\cH \otimes \C ^d, \cH)} < R_f. $$ In this case, if $\| X \| _{\mr{row}} < R_f$, then the series, 
$$ f(X) := \sum _{\om \in \F} \hat{f} _\om X^\om, $$ converges absolutely in operator-norm, in the sense that 
$$ \sum _{n=0} ^\infty \left\| \sum _{|\om | = n} \hat{f} _\om X^\om \right\| _{\scr{B} (\cH)} < +\infty, $$ and this convergence is uniform on any operator `row-ball' of radius $r < R_f$, consisting of all $X \in \scr{B} (\cH) ^{1\times d}$ so that $\| X \| _{\mr{row}} \leq r$. (Alternatively, an easy modification of Popescu's theorem shows that we have absolute and uniform convergence of the free FPS, $f$, on any `column-ball' consisting of $X = \bsm X_1 \\ \vdots \\ X_d \esm \in \scr{B} (\cH) ^d$, with $\| X \| _{\mr{col}} = \| X \| _{\scr{B} (\cH, \cH \otimes \C ^d)} \leq r < R_f$. This also follows from the more general result \cite[Theorem 8.11]{KVV}.)

It follows that any free FPS, $f$, with non-zero radius of convergence, defines a (uniformly) analytic non-commutative (NC) function in an open neighbourhood of $0 := \bsm 0 \\ \vdots \\ 0 \esm \in \C ^d$, in the $d-$dimensional complex \emph{NC universe},
$$ \ncu := \bigsqcup _{n=1} ^\infty \cdn; \quad \quad \cdn := \C ^{n\times n} \otimes \C ^d, $$ of all column $d-$tuples of square matrices, of any fixed size, $n$, in the sense of modern NC function theory \cite{KVV,AgMcY,Taylor,Taylor2,Voic,Voic2}. Namely, if we define the \emph{NC column-ball} of radius $r>0$,
$$ r \cdot\cball := \bigsqcup _{n=1} ^\infty r \cdot \bdn; \quad \quad r \cdot \bdn := \left\{ \left. X \in \cdn \right| \ \| X \| _{\mr{col}} < r \right\}, $$ then as a function on $R_f \cdot \cball$, $f$: (i) is \emph{graded}, \emph{i.e.} $f(X) \in \C ^{n \times n}$ for any $X \in \cdn$ with $\| X \| _{\mr{col}} < R_f$, (ii) \emph{respects direct sums} in the sense that if $X \in R_f \cdot \bdn$ and $Y \in R_f \cdot \bdm$, then $f(X \oplus Y) = f(X) \oplus f(Y)$, and (iii) \emph{respects joint similarity} in the sense that if $X, Y \in R_f \cdot \bdn$ and there is an invertible $S \in \mr{GL} _n$ so that $Y = S^{-1} X S := \bsm S^{-1} X_1 S \\ \vdots \\ S^{-1} X_d S \esm$, then $f(Y) = S^{-1} f(X) S$. 

There are several natural topologies that one can define on the NC universe. The most useful topology on $\ncu$ for our purposes in this paper is the  \emph{uniform topology}. We will formally define the uniform topology in the following section, this is essentially the topology arising from identifying $\ncu$ as $\C ^d$ equipped with an operator space structure \cite[Section 7.2]{KVV}. (We will work with the $d-$ component column operator space over $\C$. More generally, the uniform topology can be defined relative to any operator space structure on $\C ^d$.)  A remarkable and elementary fact about NC functions is that under very mild assumptions on an NC function, $f$, and its domain, $\nbdom f$, $f$ is automatically holomorphic and analytic. In particular, if $\nbdom f$ is a uniformly open NC set, and $f$ is locally bounded in the uniform topology, then it is Fr\'echet and (hence) G\^ateaux differentiable at any point in its domain, and it has a convergent Taylor-type power series expansion with non-zero radius of convergence about any point in its domain, see \cite[Corollary 7.26 and Corollary 7.28]{KVV} as well as  \cite{AgMcY,Taylor,Taylor2}.  Non-commutative function theory is a natural extension or ``quantization" of classical complex analysis and several complex variables, and many classical results extend to this NC setting with a mixture of analytic and algebraic proofs \cite{Augat-freeGrot,Pascoe-IFT,SSS,SSS2,KVV-local,HMS-realize,HKV-poly,KS-free,KV-freeloci,AHKMc-bianalytic,Ball-sys}.

As for analytic functions of a single complex variable, an elementary argument shows that an NC function, $h$, is uniformly analytic in an open neighbourhood of $0 \in \ncu$, if and only if it admits a realization, $h \sim (A,b,c) \in \scr{B} (\cH) ^{1\times d} \times \cH \times \cH$, in the sense that for any $X \in \cdn$ so that $\| X \| _{\mr{col}} < R_h$, 
\be h(X) = I_n \otimes b^* \left( I_n \otimes I_\cH - \sum _{j=1} ^d X_j \otimes A_j \right) ^{-1} I_n \otimes c, \label{realform} \ee \cite[Lemma 3.1]{AMS-opreal}. Here, $$ L_A (X) := I_n \otimes I_\cH - \sum _{j=1} ^d X_j \otimes A_j =: I - X \otimes A, $$ is called the \emph{linear pencil} of $A \in \scr{B} := \scr{B} (\cH) ^{1 \times d}$, and it follows that if $\| X \| _{\mr{col}} < \| A \| _{\mr{row}} ^{-1}$, where $\| A \| _{\mr{row}} = \| (A_1, \cdots, A_d) \| _{\scr{B} (\cH \otimes \C ^d, \cH)}$, then 
$$ \| X \otimes A \| = \left\| \left( I_n \otimes A_1, \cdots, I_n \otimes A_d \right) \bpm X_1 \otimes I_\cH \\ \vdots \\ X_d \otimes I_\cH \epm \right\| \leq \| A \| _{\mr{row}} \cdot \| X \| _{\mr{col}}, $$ so that the above realization formula, Equation (\ref{realform}), can be expanded as an operator-norm convergent geometric series to yield:
\be h(X) = \sum _{n=0} ^\infty I_n \otimes b^* (X \otimes A) ^n I_n \otimes c = \sum _{\om \in \F } b^* A^\om c \, X^\om. \ee (In particular, $R_h > \| A \| _{\mr{col}} ^{-1} >0$.)

However, a uniformly analytic NC function has a realization if and only if its uniformly open NC domain includes the origin, $0:= (0, \cdots, 0) \in \C ^d \subset \ncu$. For example, the NC rational function $\fr (X_1, X_2) := [X_1, X_2] ^{-1} = (X_1 X_2 - X_2 X_1) ^{-1}$ is not defined at any scalar point $x \in \C ^d \subseteq \ncu$. Nevertheless, it does have a `matrix-centre' realization about any matrix point $Y \in \cdm$, in its NC domain, see \cite[Example 2.5]{PV1}. (Here, an NC set is any direct sum-closed subset of $\ncu$, and the domain of a uniformly analytic NC function is a uniformly open NC set.) The set of all NC rational functions is the skew field, $\fskew := \fskewd$, of all NC functions that are obtained, essentially, by applying the arithmetic operations of multiplication, addition and inversion to the free algebra, $\fp := \C \langle \fz_1, \cdots, \fz _d \rangle$, of all NC or `free' polynomials in the $d$ non-commuting variables, $\fz := (\fz _1, \cdots, \fz _d)$. Any NC rational function, $\fr$, has a finite--dimensional ``matrix-centre realization" about any matrix-point, $Y \in \mr{Dom} _m \, \fr$, in its NC domain, $\nbdom \fr = \bigsqcup _{m=1} ^\infty \nbdom _m \fr$ and the theory of matrix-centre realizations for NC rational functions has been developed in detail by Porat and Vinnikov in \cite{PV1,PV2,Pthesis}. 

In order to motivate and describe matrix-centre realizations, consider that, even in one variable, an analytic function, $h$, has a realization, $(A,b,c) \in \scr{B} (\cH) \times \cH \times \cH$, if and only if it is analytic in an open neighbourhood of $0$. The coefficients of the Taylor series of $h$ at $0$ are then given by the moments of the realization, $\hat{h} _n = b^* A^n c$, as described above. In this sense, the realization is `centred at $0$'. More generally, if $h$ is analytic in an open neighbourhood of $0 \neq y \in \C$, we say that $h \sim _y (A',b',c') \in \scr{B} (\cH) \times \cH \times \cH$ is a realization of $h$ centred at $y$ if 
$$ h(z) = b^{'*} (I-(z-y)A') ^{-1} c', $$ for all $z$ in an open neighbourhood of $y$. In particular, if $|z-y| < \| A ' \| ^{-1}$, then this realization formula can be expanded as a convergent geometric series, so that
$$ h(z) = \sum _{n=0} ^\infty b^{'*} A^{'n} c' \, (z-y) ^n, $$ is the Taylor series of $h$ at $y$. If $h \sim _0 (A,b,c)$ and $L_A (y) = I -yA$  is invertible, then one can `translate' the realization $(A,b,c)$ to the point $y$ to obtain $h \sim _y (A',b',c')$. Namely, assuming that $L_A (z)$ is invertible, 
\ba h(z) & = & b^* (I-zA) ^{-1} c = b^* (I-yA -(z-y)A) ^{-1} c \\ 
& = & b^* (I- (z-y) L_A (y) ^{-1} A ) ^{-1} L_A (y) ^{-1} c, \ea so that $h\sim _y (A' ,b',c')$ with 
$$ A' = (I-yA) ^{-1} A, \quad b' =b, \quad \mbox{and} \quad c' = (I-yA) ^{-1} c. $$ 
Similarly, if $h \sim _0 (A,b,c) \in \scr{B} (\cH) ^{1\times d} \times \cH \times \cH$ is the realization of a uniformly analytic NC function in a uniformly open neighbourhood of $0$, and $Y \in \cdn$ is a matrix point in the invertibility domain of $A$, \emph{i.e.} $L_A (Y)$ is invertible, then we can `translate' the realization at $0$ to a realization centred at the matrix-point, $Y$. Namely, if $Z \in \C ^{(mn \times mn) \cdot d}$ is any other matrix point (at a level which is a multiple of the size, $n$, of $Y$) so that $L_A (Z)$ is invertible, 
\ba h(Z) & = & I_{mn} \otimes b^* L_A (Z) ^{-1} I_{mn} \otimes c \\
& = & I_{mn} \otimes b^* \left( L_A (I_m \otimes Y) - (Z- I_m \otimes Y)\otimes A \right) ^{-1} I_{mn} \otimes c \\
& = & I_m \otimes (I_n \otimes b^*) \left( I_{mn} - (I_m \otimes L_A ( Y) ^{-1})((Z -I_m \otimes Y)\otimes A) \right) ^{-1} I_m \otimes L_A (Y) ^{-1} (I_n \otimes c). \ea 
Hence, we can define $\hat{A} _j : \C ^{n \times n} \rightarrow \scr{B} (\C ^n\otimes \cH )$, and $\hat{b},\hat{c} \in \scr{B} (\C ^n, \C ^n\otimes \cH )$ by 
$$ \hat{A} _j (G) := L_A (Y) ^{-1} (G\otimes A_j), \quad \hat{b} := I_n \otimes b, \quad \mbox{and} \quad \hat{c}:= L_A(Y)^{-1}(I_n \otimes c). $$ 

If we define the \emph{linear pencil} of $\hat{A}$ on the $d-$dimensional column NC universe over $\C ^{n\times n}$,   
\begin{align*}  
\C ^{(\N n \times \N n) \cdot d} := \bigsqcup _{m=1} ^\infty \C ^{(mn\times mn)\cdot d}, & \quad \quad \mbox{as} \\
L_{\hat{A}} (X) :=  I_{mn} \otimes I_\cH - \sum _{j=1} ^d (\mr{id} _m \otimes \hat{A} _j) (X_j); & \quad \quad X \in \C ^{(mn\times mn)\cdot d} \simeq (\C ^{m \times m} \otimes \C ^{n \times n}) \otimes \C ^d, \\
 =:  I_{mn} \otimes I_\cH - \hat{A} (X); & \quad \quad \mbox{where} \\
 \quad \quad X_j  = \bpm X_j ^{(1,1)} & \cdots  & X_j ^{(1,m)} \\ \vdots & \ddots & \\ X_j ^{(m,1)} & \cdots & X_j ^{(m,m)} \epm \in \C ^{mn \times mn}, &  \quad \quad X_j ^{(k,\ell)} \in \C ^{n \times n}, \end{align*}
$\mr{id} _m : \C ^{m \times m} \rightarrow \C ^{m \times m}$ denotes the identity map on $m \times m$ matrices and 
$$ \mr{id} _m \otimes \hat{A} _j  (X_j) = \bpm \hat{A} _j (X_j ^{(1,1)}) & \cdots  & \hat{A} _j (X_j ^{(1,m)}) \\ \vdots & \ddots & \\ \hat{A} _j (X_j ^{(m,1)}) & \cdots & \hat{A}_j (X_j ^{(m,m)}) \epm, $$ denotes the $m-$fold ampliation of $\hat{A} _j$ acting on $X_j$, then 
\be h(Z) = I_m \otimes \hat{b} ^* L_{\hat{A}} (Z-I_m \otimes Y) ^{-1} I_m \otimes \hat{c}, \label{realformula} \ee and we say that $h \sim _Y (\hat{A},\hat{b},\hat{c})$ is a matrix-centre realization of $h$ centred at $Y \in \cdn$. More generally, a realization centred at a matrix point $Y \in \cdn$, is a triple, $(A,b,c)_Y$, so that each $A_j : \C^{n\times n}\to \scr{B} (\cH) $ is linear, and $b,c \in \scr{B} (\C ^n , \cH )$. A priori, this definition has no relation to the point $Y \in \cdn$; however, when we say a realization, $(A,b,c)_Y$, is \emph{centred at $Y$}, we mean that we are considering the function, $h\sim_Y (A,b,c)$, which defined in a uniformly open neighbourhood of $Y$ and given by the realization formula (\ref{realformula}). If a function, $h$, is defined by a matrix-centre realization, $(A,b,c)_Y$, about $Y \in \cdn$ in the sense of Equation (\ref{realformula}), we say that $h$ is the \emph{quantized transfer function} of $(A,b,c)_Y$ and write $h \sim _Y (A,b,c)$. It is important to note that if $h \sim _Y (A,b,c)$ is the quantized transfer function of an arbitrary matrix-centre realization about $Y \in \cdn$, for $n>1$, it is not necessarily a non-commutative function. Namely, it can always be defined on the uniformly open NC subset, $\scr{D} ^Y (A)$, consisting of all $X \in \C ^{(mn \times mn)\cdot d}$, $m \in \N$, for which the linear pencil, $L_A (X-I_m \otimes Y)$, is invertible, it preserves the grading and direct sums, but it \emph{may not preserve joint similarities}. If a quantized transfer function is an NC function, it is automatically uniformly analytic. We will explicitly characterize when a quantized transfer function is a uniformly analytic NC function in Section \ref{sec:LAC}.

It is well-known that a uniformly analytic NC function, $f$, with $0 \in \nbdom f$, is an NC rational function, $f = \fr \in \fskew$, if and only if it admits a finite--dimensional realization, $f \sim _0 (A,b,c)$, with $A \in \cdm = \C ^{m \times m} \otimes \C ^{1\times d}$ and $b,c \in \C ^m$. In \cite{AMS-opreal}, M.L. Augat, the second author and E. Shamovich initiated the development of a general theory of not necessarily finite-dimensional operator realizations of NC functions, and showed that an NC function is uniformly analytic in a uniformly open neighbourhood of $0$ if and only if it is given by a (generally infinite--dimensional) realization \cite[Lemma 3.2]{AMS-opreal}. Given $\scr{S}\subseteq \scr{B}(\cH)^{1\times d}$, let $\scr{O} _d ^\scr{S}$ be the set of all FPS with realizations in $\scr{S}$.
Now, if $\scr{B} := \scr{B} (\cH) ^{1\times d}$, $\scr{C} := \scr{C} (\cH) ^{1\times d}$ and $\scr{T} _p := \scr{T} _p (\cH) ^{1\times d}$, $p \in [1, +\infty)$ and $\scr{F} := \scr{F} (\cH) ^{1\times d}$ where $\scr{C} (\cH)$ denotes the compact linear operators on $\cH$, $\scr{T} _p (\cH)$ denotes the Schatten $p-$class operators on $\cH$, and $\scr{F} (\cH)$ denotes the finite--rank operators on $\cH$, then the sets $\scr{O} _d ^\scr{S}$, with $\scr{S} \in \left\{ \scr{F}, \scr{T} _p, \scr{C}, \scr{B} \right\}$ are (semi-free ideal) rings that admit (universal, skew) fields of fractions, $\scr{M} _d ^\scr{S}$ obeying 
$$ \fskew = \scr{M} _d ^\scr{F} \subsetneqq \scr{M} _d ^{\scr{T} _p} \subsetneqq \scr{M} _d ^{\scr{T} _q} \subsetneqq \scr{M} _d ^\scr{C} \subsetneqq \scr{M} _d ^\scr{B}, $$ for any $1 \leq p < q < +\infty$ \cite[Theorem 6.10]{AMS-opreal}. Any element $f \in \scr{M} _d ^\scr{S}$, where $\scr{S} \in \{ \scr{F}, \scr{T} _p, \scr{C}, \scr{B} \}$ as above, defines a uniformly analytic NC function on a uniformly open NC set. We also remark that the aforementioned \cite[Lemma 3.2]{AMS-opreal} shows that $\scr{M} _d ^{\scr{B}}$ can be identified with the skew field of \emph{uniformly meromorphic NC germs} at $0$, the universal skew field of fractions of the (semi-free ideal) ring of all germs of uniformly analytic NC functions at $0 \in \C ^d$, the origin of the NC universe, as defined and studied in \cite{KVV-local}. 

Our development of realization theory about a matrix-centre in the case of infinite--dimensional realizations allows us to extend realization theory to a much broader class of uniformly analytic NC functions which need not be defined at $0$. In particular, we prove that any uniformly analytic NC function $f$, which belongs to one of the skew fields, $\scr{M} _d ^\scr{S}$, $\scr{S} \in \{ \scr{F}, \scr{T} _p, \scr{C}, \scr{B} \}$ has a matrix-centre realization, $(A,b,c)_Y$, about any point in its uniformly open NC domain, where the linear maps, $A_j : \C ^{n \times n} \rightarrow \scr{B} (\cH)$ can be chosen to take values in $\scr{F} (\cH), \scr{T} _p (\cH), \scr{C} (\cH)$ if $\scr{S} = \scr{F}, \scr{T} _p$, or $\scr{C}$, respectively. Moreover, in Corollary \ref{anyNCreal}, we significantly extend \cite[Lemma 3.2]{AMS-opreal} to show that an NC function is uniformly analytic in a uniformly open neighbourhood of $Y \in \cdn$ if and only if it has a realization centred at $Y$. This is accomplished by constructing a matricial Fock space and a ``free Hardy space about $Y$", $\hardy (Y)$, consisting of uniformly analytic NC functions in an ``open NC column-ball" centred at $Y$.  

\subsection{Outline}

The following section reviews background material on NC function theory, the uniform topology, Taylor--Taylor series and realizations. Our first main results appear in Section \ref{conobminreal}, in which we introduce a notion of ``minimality" of a matrix-centre realization extending the usual concepts of minimal, controllable and observable for realizations at $0$. Theorem \ref{uniqueness} shows that any two minimal matrix-centre realizations at $Y \in \cdn$ of the same quantized transfer function, $f$, are unique up to a closed, injective linear map with dense range, \emph{i.e.} a generally unbounded ``similarity". Theorem \ref{Kalman} extends the Kalman decomposition to general matrix-centre realizations. This constructs a minimal matrix-centre realization from any matrix-centre realization by compression to a certain ``minimal" subspace. Section \ref{sec:translate} shows that given any matrix-centre realization at $Y \in \cdn$, $(A,b,c)_Y$ can be ``translated" to any point $X \in \C ^{(mn \times mn) \cdot d}$ for which $L_A (X - I_m \otimes Y)$ is invertible to obtain a realization at $X$, $(A',b',c')_X$ which is minimal if and only if $(A,b,c)_Y$ is minimal by Theorem \ref{matrixtrans}. In Section \ref{sec:LAC}, following the work of Porat and Vinnikov in \cite{PV2} for the finite-dimensional matrix-centre realizations of NC rational functions, we express the \emph{Lost Abbey conditions at $Y$}, $Y \in \cdn$, in terms of a given matrix-centre realization, $(A,b,c)_Y$. These are set of conditions that determine when the quantized transfer function, $f \sim _Y (A,b,c)$ of a matrix-centre realization respects joint similarities, and hence is an NC  function. In Section \ref{sec:matrixFock}, we introduce a matricial Fock space and a ``free Hardy space at $Y$" and use these constructions to prove that an NC function is uniformly analytic in a uniformly open neighbourhood of $Y \in \cdn$ if and only if it has a matrix-centre realization at $Y$, see Corollary \ref{anyNCreal}. Finally in Section \ref{sec:app}, we develop an application by constructing matrix-centre realizations for elements in the skew fields of NC functions generated by realizations at $0$ belonging to certain classes of linear operators. 

\section{Background} \label{sec:back}

\subsection{The NC universe}

Consider the $d-$dimensional complex column NC universe, $\ncu := \bigsqcup _{n=1} ^\infty \cdn$, where 
$\cdn := \C ^{n \times n} \otimes \C ^d$, denotes column $d-$tuples of complex $n\times n$ matrices. That is, any $Z \in \cdn$ has the form $Z = \bsm Z_1 \\ \vdots \\ Z_d \esm$, with $Z_j \in \C ^{n\times n}$.

The NC universe, $\ncu$, can be viewed as an abstract operator space, namely the complex \emph{column operator space} with $d$ components \cite{Paulsen-cbmaps}. Recall that a concrete operator space is a subspace of the Banach space of bounded linear operators on some complex Hilbert space. Further recall that an abstract operator space is a vector space, $V$, equipped with a system of matrix--norms, $\| \cdot \| _{m, n}$, on $V ^{m \times n}$ for all $m,n \in \N$, satisfying Ruan's axioms:
Given $A \in \C^{j \times k}$, $B \in \C^{\ell \times m}$ and $X \in V ^{k \times \ell}$, $Y\in V^{a\times b}$, then 
\be \| AXB \| _{j, m} \leq \| A \|  \| X \| _{k, \ell} \| B\| , \label{Ruan1} \ee and 
\be \| X \oplus Y \|_{k+a, \ell+b}  = \max \{  \| X \| _{k,\ell}, \| Y \| _{a, b} \}. \label{Ruan2} \ee

A vector space with a system of matrix norms satisfying Equation (\ref{Ruan1}) is called a matrix--normed space, and Ruan's theorem asserts that a matrix--normed space is completely isometrically isomorphic to a concrete operator space if and only if it is an abstract operator space, \emph{i.e.} a matrix--normed space obeying Equation (\ref{Ruan2}). 

If $\scr{M}$ is any operator space, it is clear that $\scr{M} ^{m \times n}$ is also an operator space for any fixed $m,n \in \N$, and $\scr{M} ^d$ is called the \emph{column operator space} (with $d$ components) over $\scr{M}$. Namely, a system of matrix norms obeying Ruan's axioms for $(\scr{M} ^d) ^{m \times n}$  is naturally defined by identifying $(\scr{M} ^d) ^{m \times n} \simeq (\scr{M} ^{m \times n}) ^d \simeq \scr{M} ^{md \times n}$, 
$$ \| X \| _{(\scr{M} ^d) ^{m \times n}}  := \| X \| _{\scr{M} ^{md \times n}} .$$ 
It follows that if we view elements of $\cdn$ as columns, $X = \bsm X_1 \\ \vdots \\ X_d \esm$, with $X_j \in \C ^{n \times n}$, we can equip this with the system of `column' matrix norms,
$$ \| X \| _{\mr{col}} := \| X \| _{\scr{B} (\C ^n, \C ^n \otimes \C ^d)} = \| X \| _{\C ^{nd \times n}}, $$ and thus identify $\ncu$ with the $d-$component column--operator space over (the operator space) $\C$. (Alternatively, we could work with any other operator space structure on $\C ^d$, such as the system of `row' matrix norms on $\C ^d$, and view the $d-$dimensional complex universe as $\C ^d$ equipped with this operator space structure \cite{EliOrr} \cite[Section 7.2]{KVV}.)

A subset, $\Om \subseteq \ncu$ is called a \emph{non-commutative (NC) set} if it is closed under direct sums, in which case we write 
$$ \Om = \bigsqcup \Om _n; \quad \quad \Om _n := \Om \cap \cdn. $$ 
It will be useful to equip the NC universe with a natural topology, the \emph{uniform topology}, generated by a basis of NC sets. First, the \emph{column pseudo-metric} is defined as 
$$ d _{\mr{col}} (X, Y) := \| X ^{\oplus m} - Y ^{\oplus n} \| _{\mr{col}}; \quad \quad X \in \cdn, \ Y \in \cdm. $$ The uniform topology is then the topology on $\ncu$ generated by the basis (or sub-base) of open sets, $r \cdot \B ^d _{\N n} (Y)$, where $Y \in \cdn$, $r>0$ are arbitrary and 
$$ r \cdot \B ^d _{\N n} (Y) := \bigsqcup _{m=1} ^\infty \left\{ \left. X \in \C ^{(mn\times mn)\cdot d} \right| \ d _{\mr{col}} (X,Y) < r \right\}, $$ \cite[Section 7.2]{KVV}. Here, recall that as described in the introduction, an NC function defined on a uniformly open NC set $\Om \subseteq \ncu$, $f: \Om \rightarrow \C ^{\N \times \N}$ is locally bounded in the uniform topology if and only if it is continuous with respect to the uniform topologies on $\ncu$ and $\C ^{\N \times \N}$, if and only if it is uniformly analytic \cite[Corollary 7.26 and Corollary 7.28]{KVV}. 

\begin{defn}
A (descriptor) realization about the matrix-centre, $Y \in \cdn$, is a triple $(A,b,c) _Y$, where $$A = (A_1, \cdots, A_d) : \cdn \rightarrow \scr{B} (\cH)$$ is a row $d-$tuple of linear maps, $A_j : \C ^{n \times n} \rightarrow \scr{B} (\cH)$, and $b,c \in \scr{B} (\C ^n , \cH )$.
\end{defn}

By \cite[Theorem 18]{Paulsen-CB}, since each $A_j : \C ^{n\times n} \rightarrow \scr{B} (\cH)$ is a linear map on the finite--dimensional operator space $\C ^{n \times n}$, each $A_j$ is automatically completely bounded with $\| A _j \| _{\mr{CB}} \leq n \sqrt{n} \| A_j \| _{\scr{B} (\C ^{n \times n}, \scr{B} (\cH))}$. Hence we can view $A$ as a completely bounded linear map from the column operator space (with $d$ components) over $\C ^{n \times n}$ into $\scr{B} (\cH)$, $A \in \mr{CB} \left( \cdn , \scr{B} (\cH) \right)$ and we define
$$ \| A \| _{\mr{CB; row}} := \| (A_1, \cdots, A_d ) \| _{\mr{CB}}.$$
(We emphasize the fact that $A$ is completely bounded, as if $f \sim _Y (A,b,c)$ is an NC function then the completely bounded norm of $A= (A_1, \cdots, A_d)$ is related to the radius of convergence for the Taylor--Taylor series expansion of $f$ at $Y$, when we equip the NC universe with the column operator--space structure, see Subsection \ref{ss:realTT} and Remark \ref{CBreason}.)

Given such a realization, we define the linear pencil of $A \in \mr{CB} (\cdn, \scr{B} (\cH) )$ on the $d-$dimensional column NC universe over $\C ^{n\times n}$,
$$ \C ^{(\N n \times \N n) \cdot d} := \bigsqcup _{m=1} ^\infty \C ^{(mn \times mn) \cdot d}, $$ 
\be X \in \C ^{(mn \times mn) \cdot d} \ \mapsto \ L_A (X) := I_m \otimes I _\cH - \sum _{j=1} ^d (\mr{id} _m \otimes A_j) (X_j) =: I_m \otimes I_\cH - A(X). \label{lpencil} \ee
The \emph{invertibility domain} of the linear pencil, $L_A (X)$ (relative to $Y \in \cdn$), is then 
\be \scr{D} ^Y (A) := \bigsqcup _{m=1} ^\infty \scr{D} ^Y _m (A); \quad \quad \scr{D} ^Y _m (A) := \{ X \in \C ^{(mn \times mn) \cdot d} | \ L_A (X - I_m \otimes Y) ^{-1} \ \exists \}. \label{invdom} \ee
It is easy to see that $\scr{D} ^Y (A)$ is always a uniformly open NC set, since the inversion map in $\scr{B} (\cH)$ is operator--norm continuous and the set of invertible elements in $\scr{B} (\cH)$ is operator--norm open. It is not obvious, however, whether or not the invertibility domain is joint similarity invariant, in contrast to the case where $Y=0 \in \C ^d$ (or when $Y =y \in \C ^d$ is any scalar tuple). 

Any matrix--centre realization, $(A,b,c)_Y$, defines a \emph{quantized transfer function}, $f$, on the uniformly open NC set, $\scr{D} ^Y (A)$. As we will see, unlike the standard case where $Y =0$ (or any scalar point), the quantized transfer function of a matrix--centre realization at $Y$ is not necessarily an NC function. Namely, while such a quantized transfer function is always graded and respects direct sums on the uniformly open NC set, $\scr{D} ^Y (A)$, it respects joint similarities if and only if $(A,b,c)_Y$ obeys the \emph{Lost--Abbey conditions at $Y$}, LAC (Y), also called the \emph{canonical intertwining condtions} at $Y$; see Section \ref{sec:LAC} and \cite{PV1,PV2,KVV}.

Given $(A,b,c)_Y$, $Y \in \cdn$, its quantized transfer function is defined as follows. If $X \in \scr{D} ^Y _m (A) \subseteq \C ^{(mn\times mn)\cdot d}$,
$$ f(X) := I_m\otimes b^* L_A (X - I_m \otimes Y) ^{-1} I_m \otimes c \in \C ^{mn\times mn}, $$ and we write $f\sim (A,b,c)_Y$ or $f\sim _Y (A,b,c)$ in this case.

\begin{defn} \label{anequiv}
Any two matrix-centre realizations, $(A,b,c)_Y$ and $(A',b',c')_Y$ are said to be \emph{analytically equivalent at $Y$}, written $(A,b,c)_Y \sim (A',b',c')_Y$ or $(A,b,c)\sim_Y (A',b',c')$, if they define the same quantized transfer function on a uniformly open neighbourhood of $Y$.
\end{defn}

In general, even if $f \sim (A,b,c) _Y$, $g \sim (A',b',c') _Y$ and $(A,b,c) _Y \sim (A',b',c') _Y$, it may be that $f (X) \neq g(X)$ for some $X \in \scr{D} ^Y (A) \cap \scr{D} ^Y (A')$, as the following example (with $Y=0$, $d=1$) shows. 

\begin{eg}
Let $U$ be the bilateral shift on $\ell ^2 (\Z)$, and consider the realization $(U, e_0, e_0)$. Then, $\sigma (U) = \partial \D$, so that the resolvent set of $U$, $\sigma (U) ^c = \C \sm \sigma (U)$, is disconnected. Hence $\scr{D} _1 (U) = \C \sm \partial \D$ is also disconnected. Then, for $|z| <1$, 
 $$ e_0 ^* (I -zU ) ^{-1} e_0 = \sum _{j=0} ^\infty e_0 ^* U ^j e_0 z^j = 1, $$ while for $|z| > 1$, 
\ba e_0 ^* (I -z U) ^{-1} e_0 & = & - z ^{-1} e_0 ^* U^* (I - z ^{-1} U^* ) ^{-1} e_0 \\
& = & -\sum _{j=0} ^\infty z^{-j-1} e_0 ^* U^{*(j+1)} e_0 = 0. \ea 

That is, the realization $(U,e_0, e_0) \sim f$ defines the analytic function which is $1$ in $\D$ and $0$ in $\C \sm \ov{\D}$. On the other hand, the one-dimensional realization $(0,1,1)$ defines the constant function $1$ everywhere. This shows that realizations do not need to evaluate to the same analytic function on the intersection of their invertibility domains, in general.
\end{eg}

\begin{remark}
Later, we will define a realization $(A,b,c)$ to be \emph{minimal}, if $c$ is $A-$cyclic and $b$ is $A^*-$cyclic. (A minimal realization is, in some sense as `small' as possible.)  In the above example, $(U, e_0, e_0)$ is a non-minimal realization, but $(0,1,1)$ is minimal. We do not know whether one can construct an example of two minimal and analytically equivalent realizations, $(A,b,c) \sim _0 (A',b',c')$, with disconnected invertibility domains, so that they define different analytic functions on a connected component disconnected from $0$. 
\end{remark}

\begin{remark}
In general, if $f\sim (A,b,c) _Y$ and $g \sim (A',b',c') _Y$ and $(A,b,c) _Y \sim (A',b',c') _Y$, then $f(X) = g(X)$ for all $X \in \scr{D} ^{\,  \leadsto \! Y} _n   (A) \cap \scr{D} ^{\, \leadsto \! Y} _n (A')$ where $\scr{D} ^{\, \leadsto \! Y} _n (A)$ denotes the path-connected component of $\scr{D} ^Y _n (A)$ containing $Y$. This follows readily from the identity theorem in several complex variables. However, it is not clear that `global evaluation' is transitive, even in the univariate setting, and even if $Y=y \in \C$.
\end{remark}

Observe that if $f \sim _Y (A,b,c)$ is given by a matrix-centre realization at $Y \in \cdn$, that for any $X \in \C ^{(mn \times mn) \cdot d}$, 
\ba \| A (X) \| _{\scr{B} (\C ^m \otimes \cH )}  & = & \left\| \sum _{j=1} ^d (\mr{id} _m \otimes A_j) (X_j) \right\|  \\
& = & \left\| (\mr{id} _m \otimes A_1, \cdots, \mr{id} _m \otimes A_d ) \circ \bpm X_1 \\ \vdots \\ X_d \epm \right\| \\
& \leq & \| (A_1, \cdots, A_d) \| _{\mr{CB}} \left\| \bpm X_1 \\ \vdots \\ X_d \epm \right\| _{\scr{B} (\C ^{mn}, \C ^{mn} \otimes \C ^d)} = \| A \| _{\mr{CB;row}} \| X \| _{\mr{col}}. \ea 
It follows that if $X \in r \cdot \B ^d _{\N n} (Y)$, so that $\| X - I_m \otimes Y \| _{\mr{col}} < r$, where $r:= \| A \| _{\mr{CB;row}} ^{-1}$, then $\| A (X-I_m \otimes Y) \| <1$ is strictly contractive so that $L_A (X - I_m \otimes Y) ^{-1}$ can be expanded as an operator--norm convergent geometric sum. That is, for such $X$,
\begin{eqnarray} f(X) & = & I_m \otimes b^* L_A (X- I_m \otimes Y) ^{-1} I_m \otimes c \nn \\
& = & \sum _{\ell =0} ^\infty I_m \otimes b^* ( (\mr{id} _m \otimes A) \circ (X-I_m \otimes Y)) ^\ell I_m \otimes c \nn \\ 
& = & \sum _{\om \in \F } I_m \otimes b^* \left((\mr{id} _m \otimes \hat{A}) \circ (X-I_m \otimes Y) \right) ^\om I_m \otimes c, \label{realTT}
\end{eqnarray} 
where, if $\om = i_1 \cdots i_k$, then we have defined
\be \left( (\mr{id} _m \otimes \hat{A}) \circ (X-I_m \otimes Y) \right) ^\om  :=  (\mr{id} _m \otimes A_{i_1}) (X_{i_1}-I_m \otimes Y_{i_1} ) \cdots (\mr{id} _m \otimes A_{i_k}) (X_{i_k} - I_n \otimes Y_{i_k}). \label{realTTcoeff} \ee

\subsection{Realizations and Taylor--Taylor series}\label{ss:realTT}

If $\Om \subseteq \ncu$ is a uniformly open NC set containing $Y \in \cdn$, and $f: \Om \rightarrow \C ^{\N \times \N}$ is uniformly analytic on $\Om$, then consider the multi-linear maps, 
$\hat{f} _\ell (Y) [ (\cdot), \cdots, (\cdot)] : (\cdn) ^{\times ^\ell}  \rightarrow \C^{n\times n}$, where $(\cdn) ^{\times ^\ell} = \cdn \times \cdn \cdots \times \cdn$ denotes the $\ell-$fold Cartesian product of $\cdn$ with itself, defined by 
$$ \hat{f} _\ell (Y) [H ^{(1)}, \cdots, H ^{(\ell)}] := \frac{1}{\ell !} \partial _{H^{(\ell)}} \cdots \partial _{H  ^{(1)}} f  (Y), $$ where $\partial _H f (Y)$ denotes that G\^ateaux derivative of $f$ at $Y \in \Om _n = \mr{Dom} _n \, f$ in the direction $H \in \cdn$. Then the Taylor--Taylor series of $f$ at $Y$, defined for $X \in \nbdom_{mn} \, f$ by 
\ba & &   \sum _{j=0} ^\infty \frac{1}{j!} \partial ^{(j)} _{X-I_m \otimes Y} f (I_m \otimes Y) = \sum _{j=0} ^\infty \hat{f} _j(Y) [X - I_m \otimes Y]; \\
& &  \hat{f} _j(Y) [H] = \frac{1}{j!} \partial ^{(j)} _{H} f (Y) := \frac{1}{j!} \underbrace{\partial _H \cdots \partial _H}_{j \times} f (Y) = \hat{f} _j(Y) [ H, \cdots , H], \ea  converges absolutely and uniformly to $f(X)$ on a uniformly open column-ball, $r \cdot \B ^d _{m\N} (Y)$, of non-zero radius, $r>0$ \cite[Theorem 8.11]{KVV}. Namely, as in \cite[Chapter 8]{KVV}, one can define the \emph{radius of convergence}, $R_f ^Y$, of this Taylor--Taylor series centred at $Y \in \cdn$ by the Cauchy--Hadamard type quantity, 
$$ \frac{1}{R_f ^Y} := \limsup _{\ell \rightarrow \infty} \sqrt[\ell]{\| \hat{f} _\ell \| _{\mr{CB}}}, $$ where $\| \hat{f} _\ell \| _{\mr{CB}}$ is the completely bounded norm of the multi-linear map, $\hat{f} _\ell$, in the sense of Christensen and Sinclair, see \cite[p. 116]{KVV} or \cite[Chapter 17]{Paulsen-cbmaps}. Namely, each $\hat{f} _\ell$ is viewed as a completely bounded $\ell-$linear map on the column operator space over $E:= \cdn = \C ^{n\times n} \otimes \C ^{d}$ into the operator space $F:= \C ^{n \times n}$. 
To explain what this means, we first need to recall the definiton of the \emph{Haagerup tensor norm}; see \cite[Chapter 17]{Paulsen-cbmaps}. 
The Haagerup tensor norm $\|\cdot\|_h$ on $E\otimes E$ is defined as follows. (Here, $E$ can be any operator space.) First, given $A \in E^{i \times j}$ and $B \in E^{j \times k}$, their Haagerup tensor product is 
$$ A \odot B := \left(\sum _{\gamma =1} ^j A_{\alpha,\gamma} \otimes B _{\gamma, \beta}\right)_{\substack{1\leq \alpha\leq i\\ 1\leq \beta\leq k}} \in (E \otimes E ) ^{i \times k}, $$ and for $C \in (E \otimes E) ^{i \times k}$, its Haagerup tensor norm is
$$ \| C \| _h := \inf \left\{ \left. \| A \|_E \| B \|_E  \right|  \ A \in E ^{i \times j}, \ B \in E ^{j \times k}, \ A \odot B = C \right\}. $$ 
The vector space, $E\otimes E$, equipped with the system of matrix norms, $\|\cdot\|_h$, will then be an operator space, which is usually denoted by $E\otimes_h E$.
More generally, $E ^{\otimes ^\ell} = \left( \cdn \right) ^{\otimes ^\ell}$, equipped with the Haagerup tensor norm is an operator space. Here, given $C \in (E ^{\otimes ^\ell}) ^{i \times k}$, setting $m_0 :=i$ and $m_{\ell } := k$,
$$  \| C \| _h = \inf \left\{ \left. \| A ^{(1)} \| _{E} \cdots \| A ^{(\ell)} \| _E  \right|  \ A ^{(s)} \in E  ^{m_{s-1} \times m_s}, 1 \leq s \leq \ell, \ A^{(1)} \odot A^{(2)} \cdots \odot A^{(\ell)} = C  \right\}. $$
Next, consider the linear map 
\begin{align*}
    f^{(\ell)}: \underbrace{E \otimes_h \cdots \otimes_h E} _{\ell \times } \rightarrow F,
\end{align*}
which is given by
 \be f^{(\ell)} (H^{(1)} \otimes H^{(2)} \cdots \otimes H^{(\ell)}) := \hat{f} _\ell (H^{(1)}, H^{(2)}, \cdots, H^{(\ell)}); \quad \quad H ^{(i)} \in E = \cdn. \label{multilin} \ee 
We then say that $\hat{f}_\ell$ is a completely bounded $\ell-$linear map if $f^{(\ell)}$ is completely bounded, and one sets $\|\hat{f}_\ell\|_\mathrm{CB} :=\|f^{(\ell)}\|_\mathrm{CB}$. (Here, note that by a similar argument to \cite[Theorem 18]{Paulsen-cbmaps}, any $\ell-$linear map on the $\ell-$fold Cartesian product of $\cdn$ with itself is automatically completely bounded.) In this way one obtains a one-to-one correspondence between completely bounded linear and completely bounded multi-linear maps, where corresponding maps are often treated as one and the same. In particular, whenever $H^{(1)},\dots, H^{(\ell)}\in E^{m\times m}$, we can extend the definition of the multi-linear map, $\hat{f} _\ell$, from tuples of matrices in $E = \C ^{n\times n}$ to such tuples of matrices over $E$ via
\begin{align*}
    \hat{f}_\ell(H^{(1)},\dots, H^{(\ell)}) := (\mr{id} _m \otimes f^{(\ell)}) \circ (H^{(1)}\odot \cdots \odot H^{(\ell)}) \in \C ^{mn \times mn}; \quad \quad H^{(i)} \in \C ^{(mn \times mn) \cdot d}.
\end{align*}

As shown in \cite[Chapter 4]{KVV}, if $X \in \C ^{(mn \times mn) \cdot d}$, 
$$ \partial ^{(\ell)} _{X - I_m \otimes Y} f (I_m \otimes Y) = \left(\mr{id} _m \otimes \partial ^{(\ell)} _{(\cdot)} f (Y) \right)  \circ \left( X - I_m \otimes Y \right) ^{\odot ^\ell} =: \partial ^{(\ell)} _{(\cdot)} f (Y)   \circ \left( X - I_m \otimes Y \right) ^{\odot ^\ell}, $$ where $\left( X - I_m \otimes Y \right) ^{\odot ^\ell}$ denotes the $\ell-$fold Haagerup tensor product of $X-I_m \otimes Y$ with itself. This means that $\left( X - I_m \otimes Y \right) ^{\odot ^\ell}$ is viewed as an $m \times m$ block matrix where each block belongs to $(\cdn) ^{\otimes ^\ell}$, and the $\ell-$linear map, $\partial ^{(\ell)} _{(\cdot)} f (Y)$ is applied to each block. 

By \cite[Corollary 7.26 and Theorem 8.11(5)]{KVV}, if the Taylor--Taylor series for an NC function, $f$, at $Y \in \cdn$ has radius of convergence, $R_f ^Y>0$, then this series converges absolutely and uniformly to $f(X)$ on $r \cdot \B ^d _{\N n} (Y)$, for any $r< R_f ^Y$, and it fails to converge uniformly on every ball $r \cdot \B ^d _{\N n} (Y)$ for $r > R_f ^Y $. In particular, $f$ is uniformly analytic in $R_f ^Y \cdot \B ^d _{\N n } (Y)$.

Here, by absolute convergence of the Taylor--Taylor series of $f (X)$ about $Y$ to $f(X)$, where $X \in \C ^{(mn \times mn) \cdot d}$, we mean that 
$$ \sum _{\ell =0} ^\infty \| \hat{f} _\ell (Y) \circ \left(X-I_m \otimes Y \right) ^{\odot ^\ell} \| _{\C ^{mn \times mn}} < + \infty, $$ and by absolute and uniform convergence in the uniform column-ball $r \cdot \B ^d _{\N n } (Y)$ we mean that
$$ \sum _{\ell =0} ^\infty \sup _{\substack{ X \in r \cdot \B ^d _{mn} (Y); \\ m \in \N}} \| \hat{f} _\ell (Y) \circ \left(X-I_m \otimes Y \right) ^{\odot ^\ell} \| _{\C ^{mn \times mn}} < + \infty. $$

The Taylor--Taylor series of a uniformly analytic $f$ at $Y \in \cdn$ can be expanded further to a series indexed by the free monoid \cite[Chapter 4]{KVV}. Namely, one can define certain \emph{partial difference--differential operators} acting on $f$, 
$$ \Delta ^\om _{H_1, \cdots, H_{|\om|}} f (X^{(0)}, \cdots, X^{(|\om|)}); \quad \quad H_i\in \C^{n\times n}, X^{(j)} \in \cdn, \ \om \in \F, $$ see \cite[Section 3.5]{KVV}.  Each partial difference--differential of $f$, $\Delta ^\om _{H_1, \cdots, H_{|\om|}} f (X^{(0)}, \cdots, X^{(|\om|)})$, is $|\om|-$linear in the directional arguments, $H_i$, for fixed $X^{(j)}$. 

It then follows that the Taylor--Taylor series of $f (X)$ at $Y$ can be expanded further as
$$ f(X) = \sum _{\om \in \F} \Delta ^{\om ^\mrt} _{(\cdot)} f (Y) \circ \left( X - I_m \otimes Y \right) ^{\odot ^{\om }}, $$ where the short-form notation, $\Delta ^{\om ^\mrt} _{(\cdot)} f(Y)$, means that $Y$ is repeated $|\om| +1-$times, and 
\begin{align*}
    \left( X - I_m \otimes Y \right) ^{\odot ^{\om }} := (X_{i_1}- I_m\otimes Y_{i_1})\odot \cdots \odot(X_{i_{|\omega|}}-I_m\otimes Y_{i_{|\omega|}}).
\end{align*}
It can be shown that each $\Delta ^{\om ^\mrt} _{(\cdot)}f (Y) \circ H ^{\odot ^\om}$, is a `generalized monomial' in $H=(H_1,\cdots,H_d) \in \cdn$ of the form 
$$ \Delta ^{\om ^\mrt} _{(\cdot)} f (Y)\circ H^{\odot^\omega} = \sum _{j_0, \dots, j_{|\om|}} T_{j_0} H_{i_1} T_{j_1} H_{i_2} \cdots H_{i_{|\om|}} T_{j_{|\om|}}; \quad \quad \om = i_1 \cdots i_{|\om |}, $$ where each $T_{j} \in \C ^{n\times n}$ belongs to the double centralizer of the point $Y$; see \cite[Theorem 4.7]{KS-free} and \cite[Arguments following Remark 4.5]{KVV}. 

Now suppose that $f \sim _Y (A,b,c)$ is a quantized transfer function that has a matrix-centre realization at $Y \in \cdn$. Then, by Equation (\ref{realTT}), for any $X \in \scr{D} ^Y _m (A)$ with $\|X-I_m\otimes Y\|_\mathrm{col}<\|A\|_\mathrm{CB;row}^{-1}$,
$$ f(X)  =  \sum _{\om \in \F } I_m \otimes b^* \left( (\mr{id} _m \otimes \hat{A})  \circ(X-I_m \otimes Y) \right) ^\om I_m \otimes c, $$ where if $\om = i_1 \cdots i_{|\om|}$,
\begin{align*}
    \left( (\mr{id} _m \otimes \hat{A}) \circ (X-I_m \otimes Y) \right) ^\om  =  (\mr{id} _m \otimes A_{i_1}) (X_{i_1}-I_m \otimes Y_{i_1} ) \cdots (\mr{id} _m \otimes A_{i_k}) (X_{i_{|\omega|}} - I_n \otimes Y_{i_{|\omega|}}),
\end{align*} 
as defined in (\ref{realTTcoeff}). 

For simplicity of notation, we define the $|\om|-$multilinear maps, $A^\om : \C ^{n\times n} \otimes \C ^{1\times |\om |}  \rightarrow \scr{B} (\cH)$ by  
$$ A^\om (H_1, \cdots, H_{|\om |}) = A_{i_1} (H_1) \cdots A_{i_{|\om |}} (H_{|\om |});\quad\quad H_i\in\C^{n\times n}.$$ 
Next, a straightforward calculation (similar to the one on \cite[p. 65]{KVV}) shows that
\begin{align*}
     \left( (\mr{id} _m \otimes \hat{A})  \circ(X-I_m \otimes Y) \right) ^\om=(\mr{id} _m \otimes A^\om) \circ (X - I_m \otimes Y) ^{\odot ^{\om}}.
\end{align*}
It follows that $f \sim _Y (A,b,c)$ can be written as a power series of multi-linear maps as
\begin{eqnarray} f(X)  & = & \sum _{\om \in \F } I_m \otimes b^* \left( (\mr{id} _m \otimes \hat{A})  \circ(X-I_m \otimes Y) \right) ^\om I_m \otimes c  \nn \\
& = & \sum _{\om \in \F } I_m \otimes b^* (\mr{id} _m \otimes A^\om) \circ (X - I_m \otimes Y) ^{\odot ^{\om}} I_m \otimes c. \end{eqnarray}

Hence, if $f$ is a uniformly analytic NC function, then by uniqueness of Taylor--Taylor series coefficients \cite[Theorem 7.9]{KVV}, \cite[Lemma 2.12]{PV1},
\be \Delta ^{\om ^\mrt} _{(\cdot)} f (Y) \circ H^{\odot^\omega}= b^* A_{i_1}(H_{i_1})  \cdots A_{i_{|\omega|}} (H_{i_{|\om|}}) c; \quad \quad H_i\in \C^{n\times n} \label{TTunique} \ee for $\om =i_1 \cdots i_{|\om|}$. In particular, it follows that $\| A \| _{\mr{CB}; \mr{row}} ^{-1} < R_f ^Y$ by \cite[Corollary 7.26 and Theorem 8.11(5)]{KVV}. 

\begin{remark} \label{CBreason} Again, this is one reason why we view the linear maps $A_j$ as completely bounded linear maps; the completely bounded norm of $A$ in a matrix-centre realization $(A,b,c)_Y$ of an NC function, $f$, is related to the radius of convergence of the Taylor--Taylor series of $f$ at $Y$ since 
this radius of convergence is 
$$ \frac{1}{R_f ^Y} = \limsup _{\ell \rightarrow \infty} \sqrt[\ell]{\| \hat{f} _\ell \| _{\mr{CB}}}, $$ where 
$$ \hat{f} _\ell = \sum _{|\om | = \ell} b^* A^\om (\cdot ) c. $$ 
\end{remark}

\begin{lemma}
Let $(A,b,c)_Y$ and $(A',b',c')_Y$ be two realizations about the matrix centre $Y \in \cdn$. Then $(A,b,c)_Y \sim (A',b',c')_Y$ are analytically equivalent at $Y$ if and only if 
$$ b^* A^\om (G) c = b^{'*} A^{' \om} (G) c', $$ for all $\om \in \F$ and for all $G \in \C ^{n\times n} \otimes \C ^{1\times |\om|}$. 
\end{lemma}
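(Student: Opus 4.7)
For the easy direction $(\Leftarrow)$, I would invoke the explicit convergent power series expansion of a quantized transfer function given in Equation (\ref{realTT}), valid on $\|A\|_{\mr{CB;row}}^{-1} \cdot \B^d_{\N n}(Y)$. Assuming $b^* A^\om(\cdot) c = b'^* A'^\om(\cdot) c'$ as multi-linear maps on $\C^{n\times n} \otimes \C^{1\times |\om|}$ for every $\om \in \F$, each term $I_m \otimes b^* (\mr{id}_m \otimes A^\om) \circ (X - I_m \otimes Y)^{\odot^\om} I_m \otimes c$ in the expansion of $f(X)$ is obtained by applying the multi-linear map $b^* A^\om(\cdot) c$ blockwise to the $m\times m$ block matrix $(X - I_m \otimes Y)^{\odot^\om}$. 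Hence the two power series agree term by term, and therefore as sums, on the common ball $\min(\|A\|_{\mr{CB;row}}^{-1}, \|A'\|_{\mr{CB;row}}^{-1}) \cdot \B^d_{\N n}(Y)$, which is a uniformly open neighbourhood of $Y$.

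For the converse $(\Rightarrow)$, suppose $f = g$ on some uniformly open neighbourhood $U$ of $Y$. I would extract each coefficient $b^* A^\om(H^{(1)}, \ldots, H^{(\ell)}) c$ individually by evaluation at a carefully chosen nilpotent perturbation. Fix $\om = i_1 \cdots i_\ell$ and matrices $H^{(1)}, \ldots, H^{(\ell)} \in \C^{n\times n}$. At matrix level $m = \ell + 1$, set
$$ X_j \; := \; I_{\ell+1} \otimes Y_j \; + \; t \sum_{k : i_k = j} E_{k, k+1} \otimes H^{(k)}, $$
where $E_{k, k+1}$ is the standard matrix unit in $\C^{(\ell+1)\times(\ell+1)}$ and $t > 0$ is small enough that $X$ lies in a ball $r \cdot \B^d_{\N n}(Y) \subseteq U$ supplied by uniform openness of $U$. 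The operator
$$ N \; := \; \sum_j (\mr{id}_{\ell+1} \otimes A_j)\bigl((X - I_{\ell+1} \otimes Y)_j\bigr) \; = \; t \sum_{k=1}^\ell E_{k, k+1} \otimes A_{i_k}(H^{(k)}) $$
is strictly block upper-triangular with $N^{\ell+1} = 0$, so $L_A(X - I_{\ell+1} \otimes Y) = I - N$ is invertible with finite geometric inverse $\sum_{s=0}^\ell N^s$. A straightforward matrix-unit calculation shows $N^s$ is supported on super-diagonal blocks $(k, k+s)$, so only the top term $N^\ell = t^\ell E_{1, \ell+1} \otimes A^\om(H^{(1)}, \ldots, H^{(\ell)})$ reaches position $(1, \ell+1)$, yielding
$$ [f(X)]_{1,\ell+1} \; = \; t^\ell\, b^* A^\om(H^{(1)}, \ldots, H^{(\ell)}) c, $$
and analogously for $g$ with $(A', b', c')$. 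Since $f(X) = g(X)$ inside $U$, cancelling $t^\ell$ gives the desired identity on every $|\om|$-tuple, and multi-linearity extends it to every $G \in \C^{n\times n} \otimes \C^{1\times |\om|}$.

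I do not foresee a serious obstacle. The bookkeeping in the matrix-unit multiplications is routine, and the only genuine subtlety is using that $U$ is uniformly open---not merely levelwise open---so that evaluation at level $m = \ell + 1$, which grows with $|\om|$, still takes place inside $U$ for every $\om$. An attractive feature of this approach is that it avoids invoking the Taylor--Taylor uniqueness theorem (stated in the excerpt only for uniformly analytic NC functions), since a quantized transfer function need not respect joint similarities.
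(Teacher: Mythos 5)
Your proof is correct and follows essentially the same strategy as the paper: the easy direction compares convergent power series term by term, and the converse evaluates both quantized transfer functions at the same jointly nilpotent point of order $\ell+1$ built from shift matrix units $E_{k,k+1}$, reads off the top-right block, and cancels the homogeneity factor $t^\ell$. Your closing remark about avoiding Taylor--Taylor uniqueness applies equally to the paper's own proof, which invokes that uniqueness only as an aside for the special case where the quantized transfer functions happen to be NC functions, and then gives the nilpotent-point argument for the general case.
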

\begin{proof}
If the quantized transfer functions of $(A,b,c)_Y$ and $(A',b',c')_Y$ are NC functions, this follows immediately from the uniqueness of Taylor--Taylor series coefficients of NC functions, \cite[Theorem 7.9]{KVV}, \cite[Lemma 2.12]{PV1} and Equation (\ref{TTunique}).

In the general case, if $b^* A^\om (G) c = b^{'*} A^{' \om} (G) c'$ for all $\om \in \F$ and all $G \in \C ^{n\times n} \otimes \C ^{1\times |\om|}$, then this clearly implies that $(A,b,c)_Y \sim (A',b',c')_Y$ since all the terms of their convergent power series expansions in the uniform row-ball $r \cdot \B ^d _{\N n} (Y)$, $r:= \min \{ \| A \| _{\mr{CB;row}} ^{-1}, \| A ' \| _{\mr{CB;row}} ^{-1} \}$ are the same. 

Conversely if $(A,b,c)_Y \sim (A',b',c')_Y$, $G = (G_1, \cdots, G _\ell)$ and $\om =i_1 \cdots i_\ell$ is an arbitrary word, we define a point $X (\om) \in \scr{D} ^Y _{\ell +1} (A) \cap \scr{D} ^Y _{\ell +1} (A')$ as follows. (The following construction is taken from \cite[Section 4.2]{AMS-opreal}.) Let $E_{j,k}$ be the standard matrix units for $\C^{(\ell +1) \times (\ell +1)}$ and set $T_\ell := \sum _{j=1} ^\ell E_{j,j+1}$, 
$$ T_\ell = \bpm 0 &1  & & \\ & \ddots & \ddots  &  \\ & & & 1 \\ & & & 0 \epm \in \C ^{(\ell +1) \times (\ell +1)}. $$ We will call $T_\ell$ the truncated backward shift matrix of size $\ell$. We then define 
$$ X (\om ) _k := I_{\ell +1} \otimes Y_k +   T_\ell \otimes I_n \sum _{j=1} ^\ell \delta _{i_j,k}  E_{j+1} \otimes r \cdot G_{i_j}, $$ where we write $E_j := E_{j,j}$ for the diagonal matrix units and we choose $r>0$ sufficiently small so that $\|X(\om)-I_{\ell+1}\otimes Y\|_\mathrm{col}< \min \{ \| A \| _{\mr{CB;row}} ^{-1}, \| A ' \| _{\mr{CB;row}} ^{-1} \}$. For example, if $d=2$, $\om =12$ and $r=1$, 
$$ X(12) _1 = \bpm Y_1 & G_1 & 0 \\ 0 & Y_1 & 0 \\ 0 & 0 & Y_1 \epm, \quad \quad X(12)_2 = \bpm Y_2 & 0 & 0 \\ 0 & Y_2 & G_2 \\ 0 & 0 & Y_2 \epm. $$ 
Then the tuple $X (\om) \in \C ^{( (\ell +1)n \times (\ell +1) n ) \cdot d}$ is \emph{jointly nilpotent at $Y$} of order $\ell +1$ in the sense that 
$$ (X (\om) - I_{\ell +1} \otimes Y) ^{\odot ^\alpha} =0, $$ for any $|\alpha | > \ell$. Moreover, if $|\alpha | = \ell >0$,
$$(X (\om) - I_{\ell +1} \otimes Y) ^{\odot ^\alpha} = r ^\ell \cdot \delta _{\alpha, \om} \, E_{1, \ell +1} \otimes \underbrace{G_{i_1} \otimes G_{i_2} \cdots \otimes G_{i_\ell}}_{=: G ^{\otimes ^\om}} = \delta _{\alpha, \om} \bpm 0 & \cdots & 0 & r ^\ell G ^{\otimes ^\om} \\ & \ddots & & 0 \\& & & \vdots \\ & & & 0 \epm. $$
Hence, 
\ba & &  I_{\ell +1} \otimes b^* L_A (X (\om) - I_{\ell +1} \otimes Y) ^{-1}  I_{\ell +1} \otimes c =  
\sum _{\alpha \in \F } (\mr{id} _{\ell +1} \otimes A^\alpha) \circ ( X (\om) - I_{\ell +1} \otimes Y) ^{\odot ^\alpha}  \\ 
& = & \sum _{|\alpha| < \ell} (\mr{id} _{\ell +1} \otimes A^\alpha) \circ ( X (\om) - I_{\ell +1} \otimes Y) ^{\odot ^\alpha} + (\mr{id} _{\ell +1} \otimes A^\om) \circ ( X (\om) - I_{\ell +1} \otimes Y) ^{\odot ^\om} \\
& = & \bpm b^*c & * & \cdots & * & b^* A^\om (r^\ell G ^{\otimes ^\om}) c \\ & \ddots &  & & * \\ & & & & \vdots \\ & & & & * \\
&  & & & b^*c \epm. \ea 
By analytic equivalence, the last formula must be equal to 
$$ I_{\ell +1} \otimes b^{'*} L_{A'} (X(\om) - I_{\ell +1} \otimes Y) ^{-1} I_m \otimes c' = 
\bpm b^{'*}c' & * & \cdots & * & b^{'*} A^{';\om} (r^\ell G ^{\otimes ^\om}) c' \\ & \ddots &  & & * \\
& & & & \vdots \\ & & & & * \\ &  & & & b^{'*}c' \epm. $$
Hence, the top right blocks of these matrices must be equal, and by multi-linearity, 
$$ r^\ell b^* A^\om ( G ^{\otimes ^\om}) c = b^* A^\om (r^\ell G ^{\otimes ^\om}) c = 
b^{'*} A^{';\om} (r^\ell G ^{\otimes ^\om}) c' = r^\ell b^{'*} A^{';\om} ( G ^{\otimes ^\om}) c'. $$ We conclude that 
\ba b^* A^\om ( G ^{\otimes ^\om}) c &= & b^* A_{i_1} (G_1) \cdots A_{i_\ell} (G_\ell) c \\
& = & b^{'*} A' _{i_1} (G_1) \cdots A' _{i_\ell} (G_\ell) c' \\
& = & b^{'*} A^{';\om} ( G ^{\otimes ^\om}) c', \ea for any word $\om \in \F$ and any $G \in \C ^{n\times n} \otimes \C ^{1\times \ell}$. 
\end{proof}

\section{Controllable, observable and minimal matrix-centre realizations} \label{conobminreal}

This section and the following Kalman decomposition section are straightforward extensions of the results of \cite[Section 3]{AMS-opreal} for realizations centred at $0$ to realizations about a matrix-centre.

Given any realization, $(A,b,c)_Y$, $Y \in \cdn$, we define the \emph{controllable subspace}, 
$$ \scr{C} _{A,c} := \bigvee _{\om \in \F} \nbran A ^\om c, $$ where, if $\om = i_1 \cdots i_{|\om |}$, then
$A^\om : \C ^{n\times n} \otimes \C ^{1 \times |\om|} \rightarrow \scr{B} (\cH)$ is the $|\om |-$multilinear map defined by 
$$ A^\om (G_1, \cdots, G_{|\om|}) := A_{i_1} (G_1) A_{i_2} (G_2) \cdots A_{i_{|\om|}} (G_{|\om|}) \in \scr{B} (\cH), \quad G_i \in \C ^{n\times n}. $$ 
Hence,
$$ \nbran A^\om c = \bigvee _{\substack{G_i \in \C ^{n \times n} \\  v \in \C ^n}} A_{i_1} (G_1) \cdots A_{i_{|\om|}} (G_{|\om|}) \, c (v). $$ The realization, $(A,b,c)_Y$, is said to be \emph{controllable} if $\cH = \scr{C} _{A,c}$.  
Similarly, we define the \emph{observable subspace}, 
$$ \scr{O} _{A^\dag, b} := \bigvee _{\om \in \F} \nbran A^{\om; \dag} b, $$ where, if $\om = i_1 \cdots i_{|\om|}$, and $G_i \in \C ^{n \times n}$ then 
$$ A^{\om ; \dag} ( G_1, \cdots, G_{|\om|} ) := A_{i_{|\om|}} (G_{|\om|} ^*) ^* A_{i_{|\om|-1}} (G_{|\om|-1} ^*) ^* \cdots A_{i_1} (G_1 ^*) ^* \in \scr{B} (\cH). $$ That is, the $|\om |-$linear map,$A^{\om; \dag}$, is defined so that 
$$ A^{\om; \dag} (\vec{G}) = (A^{\om} (\vec{G} ^*))^*; \quad \quad \vec{G} := (G_1, \cdots, G_{|\om|}), $$ and $\vec{G} ^*$ denotes component-wise adjoint.  As before, $(A,b,c)_Y$ is said to be \emph{observable}, if $\scr{O} _{A^\dag, b} = \cH$. Finally, $(A,b,c)_{Y}$ is said to be \emph{minimal}, if it is both controllable and observable. 

Given any $p \in \fp$, $p = \sum _{|\om | \leq N} \hat{p} _\om \fz ^\om$, we can view $p(A)$ as multi-affine--linear map in $n_p \in \N$ arguments in $\C ^{n \times n}$ into $\scr{B} (\cH)$, where 
$$ n_p := \sum _{\substack{\om \in \F \\ \hat{p}_\om \neq 0}} | \om |. $$ 

Namely, given two words, $\alpha, \om \in \F$ and $c \in \C$, define $A^\alpha + c A^\om : \C ^{n \times n} \otimes \C ^{1\times (|\alpha | + |\om| )} \rightarrow \scr{B} (\cH)$ in the obvious way:
$$ (A^{\alpha} + c A^\om) (G_1, \cdots, G_{|\alpha|}; H_1, \cdots H_{|\om |} ) := A^\alpha (G_1, \cdots, G_{|\alpha|}) + c A^\om (H_1, \cdots, H_{|\om|}), $$
for $G_i, H_j \in \C ^{n \times n}$. For simplicity of notation, when we view $p(A)$ for $p \in \fp$ as a multi-affine--linear map in $n_p$ copies of $\C^{n\times n}$, we will often simply write $p(A)[\vec{G}]$ for the image of $\vec{G} := (G_1, \cdots, G_{n_p}) \in \C^{n\times n} \otimes \C ^{1 \times n_p}$ under $p(A)$ and we employ the notation:

$$ \nbran \C \langle A \rangle c := \bigvee _{p \in \fp} \nbran p(A)c. $$ 
It is then clear that 
$$\nbran \C \langle A \rangle c = \bigvee _{\om \in \F} \nbran A^\om c  = \scr{C} _{A,c}. $$ 

We will say that a closed and densely--defined linear map, $S : \nbdom S \subseteq \cH \rightarrow \cH '$ is a \emph{pseudo-similarity}, if it is injective and has dense range. (Hence it has a closed and densely--defined inverse, $S^{-1} : \nbran S \rightarrow \cH$, which is then also a pseudo-similarity.)
The following theorem is a matrix-centre analogue of \cite[Theorem 3.6]{AMS-opreal} and the proof is similar. 

\begin{thm}[Uniqueness of minimal realizations] \label{uniqueness}
Suppose that $(A,b,c) _Y \sim (A',b',c')_Y$ are minimal analytically equivalent realizations at $Y \in \cdn$ that take values in $\cH$ and $\cH'$, respectively. Then there is a unique pseudo-similarity, $S: \nbdom S \subseteq \cH \rightarrow \cH '$ so that:
\bi
    \item[(i)] $\nbran \C \langle A \rangle c$ is a core for $S$ and $S p(A) [\vec{G}] c(v) = p(A')[\vec{G} ] c' (v)$ for all $\vec{G} \in \C^{n\times n} \otimes \C ^{1 \times n_p}$ and $v \in \C ^n$.  
    \item[(ii)] $\nbran \C \langle A^{'\dag} \rangle b'$ is a core for $S^*$ and $S^* q(A^{'\dag}) [\vec{G}] b'(u) = q(A^\dag) [\vec{G}] b (u)$ for all $\vec{G}=(G_1, \cdots, G_{n_q})$, $G_i \in \C ^{n \times n}$ and $u \in \C ^n$. 
\ei
\end{thm}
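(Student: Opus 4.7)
The plan is to construct $S$ as the closure of a symbolic map on a natural dense domain. Set
$$ S_0 \colon \nbran \C \langle A \rangle c \, \subseteq \, \cH \longrightarrow \cH', \qquad S_0 \bigl( p(A)[\vec{G}] c(v) \bigr) := p(A')[\vec{G}] c'(v), $$
whose domain is dense by controllability of $(A,b,c)_Y$, and in parallel
$$ T_0 \colon \nbran \C \langle A^{'\dag} \rangle b' \, \subseteq \, \cH' \longrightarrow \cH, \qquad T_0 \bigl( q(A^{'\dag})[\vec{H}] b'(u) \bigr) := q(A^{\dag})[\vec{H}] b(u), $$
whose domain is dense by observability of $(A',b',c')_Y$. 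The goal is to take $S := \ov{S_0}$ and to identify $\ov{T_0} = S^*$, so that (i) holds by construction while $\nbran \C \langle A^{'\dag}\rangle b'$ becomes the advertised core for $S^*$.

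The computational heart of the argument is the following identity, which will simultaneously imply well-definedness of $S_0$ and the adjoint relation $T_0 \subseteq S_0^*$. Using $(A^{'\om;\dag}[\vec{K}])^* = A^{'\om}[\vec{K}^*]$ and bilinearity, the product $(q(A^{'\dag})[\vec{H}])^* p(A')[\vec{G}]$ rewrites as a finite $\C$-linear combination of multi-linear expressions $A^{'\alpha\beta}[\vec{H}_\alpha^{*}, \vec{G}_\beta]$, where $\alpha,\beta$ range over the words supporting $q,p$. Sandwiching by $b^{'*}$ on the left and $c'$ on the right and applying the moment identities $b^{'*} A^{'\om}[\vec{K}] c' = b^* A^\om[\vec{K}] c$ supplied by the preceding lemma, one obtains the symmetric equality
$$ \bigl\langle q(A^{'\dag})[\vec{H}] b'(u), \, p(A')[\vec{G}] c'(v) \bigr\rangle_{\cH'} \; = \; \bigl\langle q(A^{\dag})[\vec{H}] b(u), \, p(A)[\vec{G}] c(v) \bigr\rangle_{\cH}. $$
Observability of $(A',b',c')_Y$ makes the vectors $q(A^{'\dag})[\vec{H}] b'(u)$ total in $\cH'$, so the identity forces $p(A')[\vec{G}]c'(v)=0$ whenever $p(A)[\vec{G}]c(v)=0$; hence $S_0$ is well-defined, and by the mirror argument using observability of $(A,b,c)_Y$, so is $T_0$. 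The same identity, read as $\langle y, S_0 x\rangle_{\cH'} = \langle T_0 y, x\rangle_\cH$, is exactly $T_0 \subseteq S_0^*$ (and, symmetrically, $S_0 \subseteq T_0^*$), so both operators are closable; set $S := \ov{S_0}$ and $T := \ov{T_0}$.

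That $S$ is a pseudo-similarity follows from minimality. Density of $\nbran S$ is immediate since $\nbran S_0 = \nbran \C \langle A' \rangle c'$ is dense in $\cH'$ by controllability of $(A',b',c')_Y$. For injectivity, if $Sx = 0$ with $x_n \in \nbdom S_0$, $x_n \to x$ and $S_0 x_n \to 0$, then for every $y \in \nbdom T_0$,
$$ \langle T_0 y, x \rangle_\cH \; = \; \lim_{n \to \infty} \langle T_0 y, x_n \rangle_\cH \; = \; \lim_{n \to \infty} \langle y, S_0 x_n \rangle_{\cH'} \; = \; 0, $$
and $\nbran T_0 = \nbran \C \langle A^{\dag} \rangle b$ is dense in $\cH$ by observability of $(A,b,c)_Y$, forcing $x=0$. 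Uniqueness of $S$ is then transparent: any two pseudo-similarities satisfying (i) agree on the core $\nbran \C\langle A\rangle c$, so their closed graphs coincide.

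The main obstacle is the core claim in (ii), namely promoting $T \subseteq S^*$ to the equality $T = S^*$. The plan is to run the symmetric construction in parallel by introducing the analogously defined operators $\tilde{S}_0 \colon \nbran \C\langle A' \rangle c' \to \cH$ and $\tilde{T}_0 \colon \nbran \C\langle A^{\dag}\rangle b \to \cH'$ obtained by exchanging the roles of $(A,b,c)_Y$ and $(A',b',c')_Y$. The symbolic left- and right-inverse identities $\tilde{S}_0 S_0 \supseteq I$ and $S_0 \tilde{S}_0 \supseteq I$ on the respective cores, combined with the adjoint inclusions $\tilde{T}_0 \subseteq \tilde{S}_0^*$ and $\tilde{S}_0 \subseteq \tilde{T}_0^*$ coming from the same moment identities, are then leveraged to identify $\ov{\tilde{S}_0}$ with the closed inverse $S^{-1}$ and $\ov{\tilde{T}_0}$ with $T^{-1}$, which forces $S^* = T$ and supplies the required core. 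This delicate bookkeeping mirrors the corresponding treatment for $Y=0$ realizations in \cite[Theorem 3.6]{AMS-opreal}, of which the present theorem is the matrix-centre extension announced at the beginning of the section.
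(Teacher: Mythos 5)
Your construction of $S_0$, the moment-identity computation used to verify well-definedness, the observation that $T_0\subseteq S_0^*$ gives closability, and the use of the two controllability hypotheses for dense domain and dense range are all precisely what the paper does (the paper simply sets $S:=\ov{S_0}$, records $S^*=S_0^*$, and derives the action of $S_0^*$ on $\nbran \C\langle A^{'\dag}\rangle b'$). Your additional checks of injectivity of $S$ (via density of $\nbran S_0^*$) and of uniqueness (two closed operators agreeing on a common core coincide) are correct and are left implicit in the paper, so those paragraphs are welcome detail rather than a divergence.

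The one place where your proposal departs from, and does not actually deliver, the paper's claim is the core statement in part (ii). You correctly flag it as the main obstacle, but the mechanism you sketch does not close it. Identifying $\ov{\tilde S_0}$ with $S^{-1}$ and $\ov{\tilde T_0}$ with $T^{-1}$ is fine (graph-flip of the closure for injective closable operators), but the resulting adjoint relations $\tilde T \subseteq \tilde S^*=(S^*)^{-1}$ and $\tilde S\subseteq\tilde T^*=(T^*)^{-1}$ flip back to exactly the containments $T\subseteq S^*$ and $S\subseteq T^*$ that you already have; none of them yields the reverse containment $S^*\subseteq T$ that would upgrade $T\subseteq S^*$ to equality, which is what ``$\nbran\C\langle A^{'\dag}\rangle b'$ is a core for $S^*$'' amounts to. In general, closed densely-defined $S,T$ with $T\subseteq S^*$ and $S\subseteq T^*$ need not satisfy $T=S^*$, so some additional input is needed. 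To be fair, the paper's own proof disposes of (ii) with the phrase ``follows from a symmetric argument,'' and the same issue is lurking there; your write-up just makes the lacuna visible. If you want to complete this step you will need a genuine extra ingredient beyond the inverse bookkeeping — for instance an argument specific to this situation showing that the graph of $T_0$ is dense in the graph of $S_0^*$, or a reduction to the $Y=0$ case where \cite[Theorem 3.6]{AMS-opreal} is invoked — rather than iterating the adjoint inclusions.
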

\begin{proof}
We need to first check that $S$ is a well-defined linear map. Namely, if $p(A)[\vec{G}] c(v) =0 \in \cH$, we need to show that $p(A') [\vec{G}] c' (v) =0 \in \cH'$. For any $q \in \fp$, let $m=n_q$. Then, for any $H=(H_1, \cdots H_m)$, $H_j \in \C ^{n \times n}$ and $u \in \C ^n$, since $(A,b,c) \sim_Y (A',b',c')$,
\ba 0 & = & b(u) ^* \ov{q}(A) [\vec{H}^*] p(A) [\vec{G}] c(v) := \ip{q(A ^\dag) [\vec{H}] b(u)}{p(A)[\vec{G}]c(v)}_\cH \\
& = & b' (u) ^* \ov{q}(A') [\vec{H}^*] p(A') [\vec{G}] c(v) = \ip{q(A^{';\dag}) [\vec{H}] b'(u)}{p(A')[\vec{G}]c'(v)}_{\cH '}, \ea where if $q (\fz) = \sum \hat{q} _\om \, \fz ^\om$, then $\ov{q} (\fz) := \sum \ov{\hat{q} _\om} \, \fz ^\om$, is obtained by taking the complex conjugate of each coefficient.
However, since $(A',b',c')_Y$ is assumed to be minimal, 
$$ \cH ' = \scr{O} _{A^{'\dag}, b'} = \bigvee _{\om \in \F} \nbran A^{' \om; \dag} b = \bigvee _{q \in \fp} \nbran q(A^{';\dag}) b', $$ and we conclude that 
$p(A')[\vec{G}]c' (v) =0$, as required.

Since $(A,b,c)_Y$ and $(A',b',c')_Y$ are both minimal realizations, they are controllable,
$$ \cH = \scr{C} _{A,c} = \bigvee p(A) [\vec{G}] c (v), $$ and $\cH' = \scr{C} _{A',c'}$. Hence the non-closed linear span, $\nbdom S_0 := \bigvee p(A)[\vec{G}] c(v)$, is a dense domain in $\cH$, and $S_0 : \nbdom S_0 \twoheadrightarrow \bigvee p(A')[\vec{G}] c'(v)$ is a densely-defined linear map with dense range in $\cH$. To show that $S_0$ is closeable, it suffices to show that $S_0 ^*$ is densely-defined. Given any $q \in \fp$, consider 
\ba \ip{q(A^{' \dag})[\vec{H}] b'(u)}{S_0 p(A)[\vec{G}]c(v)}_{\cH'} & = & \ip{q(A^{' \dag})[\vec{H}] b'(u)}{p(A')[\vec{G}]c'(v)}_{\cH'} \\
& = & \ip{q(A^\dag)[\vec{H}] b(u)}{p(A)[\vec{G}]c(v)}_{\cH}. \ea It follows that 
$$ q(A^{' \dag})[\vec{H}] b'(u) \in \nbdom S_0 ^*, \quad \mbox{and} \quad S_0 ^* q(A^{' \dag})[\vec{H}] b'(u) = q(A^{ \dag})[\vec{H}] b(u), $$ for any $\vec{H} \in \C^{n\times n} \otimes \C ^{1 \times n_q}$ and any $u \in \C ^n$. Since $(A',b',c')_Y$ is also observable, it follows that $S_0 ^*$ is densely-defined, so that $S_0$ is closeable with closure $S$ and $S^* = S_0^*$. Proof of the second statement, (b), follows from a symmetric argument. 
\end{proof}

\section{The Kalman decomposition}

Given $(A,b,c)_Y$, $Y\in \cdn$, we define the \emph{minimal subspace} as the space,
$$ \scr{C} _{A,c} \ominus (\scr{C} _{A,c}  \cap \scr{O} _{A ^\dag,b} ^\perp). $$ 
Observe that since
$$ \scr{C} _{A,c} = \bigvee _{\om \in \F} \nbran A^\om c, $$ where $A^\om$ takes values in $\scr{B} (\cH)$ and $c$ takes values in $\cH$, that $\scr{C} _{A,c}$ is invariant for any $A^\om [\vec{G}] \in \nbran A^\om$, $\vec{G} = (G_1, \cdots, G_{|\om|} )$, $G_i \in \C ^{n\times n}$. Similarly, $\scr{O} _{A^\dag, b}$ is co-invariant for each element of $\nbran A^\om$. It follows that $\scr{M} _{A,b,c}$ is $\nbran A^\om-$semi-invariant so that if $Q_0$ denotes orthogonal projection onto $\scr{M}_{A,b,c} \subseteq \cH$, and we define $(A^{(0)}, b_0, c_0)$ by $A^{(0)} _j (G):= Q_0 A_j (G) | _{\scr{M}_{A,b,c}}$, $G \in \C ^{n\times n}$, $b_0 = Q_0 b$ and $c_0 = Q_0 c$, then for any $\om = i_1 \cdots i_{|\om|} \in \F$, 
$$ Q_0 A_{i_1} (G_1) Q_0 \cdots Q_0 A_{i_{|\om|}} (G_{|\om|} ) Q_0 = 
Q_0 A_{i_1} (G_1)  \cdots A_{i_{|\om|}} (G_{|\om|}) Q_0 = Q_0 A^\om (G) Q_0, $$ by semi-invariance \cite{Sarason}. The following theorem is a matrix-centre analogue of \cite[Theorem 3.12]{AMS-opreal}.

\begin{thm}[Kalman decomposition] \label{Kalman}
Let $f \sim (A,b,c)_Y$ be a realization centred at $Y \in \cdn$ and let $Q_0$ be the orthogonal projection onto $\scr{M} _{A,b,c} \subseteq \cH$. Then $(A^{(0)},b_0,c_0)_Y$ is minimal and $(A^{(0)}, b_0, c_0) \sim _Y (A,b,c)$ are analytically equivalent realizations that define the same quantized transfer function, $f$, on the level-wise path-connected component of $Y$ in $\scr{D}  ^Y (A) \cap \scr{D} ^Y  (A^{(0)})$, $\scr{D} ^{\, \leadsto \! Y}  (A) \cap \scr{D} ^{\, \leadsto \! Y}  (A^{(0)})$.

\end{thm}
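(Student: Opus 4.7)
My plan is to establish analytic equivalence by matching ``moments'' and invoking the preceding lemma, verify minimality of $(A^{(0)}, b_0, c_0)_Y$ directly, and then use the earlier remark that analytically equivalent realizations agree on the path-connected component of $Y$ in the intersection of their invertibility domains.

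I would first set up the orthogonal decomposition $\cH = \scr{M} \oplus \scr{C}^\perp \oplus (\scr{C} \cap \scr{O}^\perp)$, writing $\scr{C} := \scr{C}_{A,c}$, $\scr{O} := \scr{O}_{A^\dag, b}$, and $\scr{M} := \scr{M}_{A,b,c}$. Since $\scr{O}$ is co-invariant for each $A^\om(\vec{G})$ (equivalently, $\scr{O}^\perp$ is invariant) and $\scr{C}$ is invariant, the intersection $\scr{C} \cap \scr{O}^\perp$ is invariant, and $\scr{M}$ is semi-invariant as a difference of nested invariant subspaces, giving the Sarason identity $Q_0 A^\om(\vec{G}) Q_0 = A^{(0);\om}(\vec{G})$ on $\scr{M}$ already recorded in the paragraph preceding the theorem. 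Two observations about $b$ and $c$ then make the rest routine: since $A^\emptyset c = c$, one has $\nbran c \subseteq \scr{C}$, so $(I-Q_0)c$ lies in $\scr{C}\cap \scr{O}^\perp$; and since $\nbran b \subseteq \scr{O}$, setting $w := P_{\scr{C}\cap\scr{O}^\perp}\, b(u)$ yields $\|w\|^2 = \ip{w}{b(u)} = 0$ (as $w \in \scr{O}^\perp$ and $b(u) \in \scr{O}$), so $(I-Q_0)b$ lies in $\scr{C}^\perp$.

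For analytic equivalence, split $c = c_0 + (I-Q_0)c$: by invariance, $A^\om(\vec{G})(I-Q_0)c$ stays in $\scr{C}\cap \scr{O}^\perp$ and is killed on the left by $b^*$, so $b^*A^\om(\vec{G}) c = b^*A^\om(\vec{G}) c_0$. Splitting $b = b_0 + (I-Q_0)b$ next, $A^\om(\vec{G}) c_0 \in \scr{C}$ while $(I-Q_0)b$ lies in $\scr{C}^\perp$, so the cross term vanishes and $b^*A^\om(\vec{G}) c_0 = b_0^* A^\om(\vec{G}) c_0 = b_0^* A^{(0);\om}(\vec{G}) c_0$, the last equality by Sarason since $Q_0 c_0 = c_0$. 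The preceding lemma then yields $(A,b,c)_Y \sim (A^{(0)}, b_0, c_0)_Y$, and the remark preceding this section (via the identity theorem in several complex variables) extends this equality from a uniform neighbourhood of $Y$ to the level-wise path-connected component of $Y$ in $\scr{D}^Y(A) \cap \scr{D}^Y(A^{(0)})$.

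Minimality is the same bookkeeping applied to a single inner product. For controllability, if $v \in \scr{M}$ is orthogonal to $\scr{C}_{A^{(0)}, c_0}$, the splitting above gives $\ip{v}{A^{(0);\om}(\vec{G}) c_0(v')} = \ip{v}{A^\om(\vec{G}) c(v')}$ for every $\om, \vec{G}, v'$, so $v \perp \scr{C}$, forcing $v \in \scr{C}\cap \scr{C}^\perp = \{0\}$. For observability, the dual computation (using $P_{\scr{C}^\perp}\, b = 0$ and the $A^{\om;\dag}$-invariance of $\scr{C}^\perp$) shows $\ip{v}{A^{(0);\om;\dag}(\vec{G}) b_0(u)} = \ip{v}{A^{\om;\dag}(\vec{G}) b(u)}$, so any $v \in \scr{M}$ orthogonal to $\scr{O}_{A^{(0);\dag}, b_0}$ satisfies $v \in \scr{O}^\perp$, forcing $v \in \scr{M} \cap \scr{C} \cap \scr{O}^\perp = \{0\}$. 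The only potentially slippery step is keeping the three summands of the orthogonal decomposition straight, but once the ``out-of-$\scr{M}$'' parts of $c$ and $b$ are confined to $\scr{C}\cap \scr{O}^\perp$ and $\scr{C}^\perp$ respectively, every remaining calculation reduces to a one-line orthogonality check.
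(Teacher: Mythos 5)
Your proof is correct in substance, and it arrives at the same destination as the paper via a slightly cleaner route. The paper proves the result in two stages: first it restricts $(A,b,c)_Y$ to the $A$-invariant controllable subspace $\scr{C}_{A,c}$ to produce an intermediate controllable realization $(A',b',c')_Y$, then it identifies $\scr{O}_{A'^\dag,b'}$ with $\scr{M}_{A,b,c}$ and compresses again to the $A'$-co-invariant minimal subspace. You skip the intermediate realization entirely: you set up the three-way orthogonal decomposition $\cH = \scr{M} \oplus (\scr{C}\cap\scr{O}^\perp)\oplus \scr{C}^\perp$ at the outset, note the key facts that $(I-Q_0)c$ falls into the invariant piece $\scr{C}\cap\scr{O}^\perp$ (hence is annihilated by $b^*$ after propagation by $A^\om$) and that $(I-Q_0)b$ falls into $\scr{C}^\perp$, and then verify the moment identity and minimality by one-line orthogonality checks. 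Both arguments hinge on the same Sarason semi-invariance identity $Q_0 A^\om Q_0 = A^{(0);\om}$; yours buys a shorter proof at the cost of having to track the three-block decomposition carefully.

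One small slip worth fixing: in the observability paragraph you write "using $P_{\scr{C}^\perp}\,b = 0$", but that is false in general (it would say $\nbran b \subseteq \scr{C}$, which need not hold). What you actually established, and what the dual computation uses, is that $P_{\scr{C}\cap\scr{O}^\perp}\,b = 0$, equivalently that $(I-Q_0)b$ lies entirely in $\scr{C}^\perp$; combined with the $A^{\om;\dag}$-invariance of $\scr{C}^\perp$ this kills the cross term against $v\in\scr{M}\subseteq\scr{C}$. The substance of your argument already uses the right fact — only the citation of it is garbled.
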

\begin{proof}
Given the realization, $(A,b,c)_Y$, centred at $Y \in \cdn$, let $(A',b',c')_Y$ be the controllable realization obtained by `restricting' $(A,b,c)_Y$ to the $A-$invariant controllable subspace $\scr{C} _{A,c} \subseteq \cH$. That is, if $P'$ is the orthogonal projection onto $\scr{C} _{A,c}$, then $A' _j := A _j| _{\scr{C} _{A,c,}}$, $1\leq j \leq d$, $b:= P' b$ and $c' = P' c = c$. Then $(A',b',c')_Y$ is a controllable realization on $\cH' = \scr{C} _{A,c}$ since 
\ba \scr{C} _{A',c'} & = & \bigvee _{\om \in \F} \nbran A^{'\om}c' \\
& = & \bigvee _{\substack{\vec{G} \in \C^{n\times n} \otimes \C ^{1\times |\om|} \\ v \in \C ^n}} A^{' \om} [\vec{G}] c' (v)  \\
& = & \bigvee (P' A P') ^{\om} [\vec{G}] c' (v) \\
& = &  \bigvee A^\om [\vec{G}] \underbrace{P'c (v)}_{=c(v)} = \scr{C} _{A,c} = \cH ', \ea
using the $A-$invariance of $P'$.
In the above, if $\om = i_1 \cdots i_{|\om|} \in \F$ and $\vec{G} = (G_1, \cdots, G_{|\om |})$, $G_i \in \C ^{n\times n}$, the notation  $(P' A P') ^\om [\vec{G}]$ denotes 
$$ P' A_{i_1} (G_1) P' A_{i_2} (G_2) P' \cdots P' A_{i_{|\om |}} (G_{|\om |}) P'. $$ 

Moreover, $(A,b,c)\sim_Y (A',b',c')$ since for any $\om \in \F$ and $\vec{G} \in \C ^{n\times n} \otimes \C ^{1\times |\om|}$, 
\ba b^{'*} A^{' \om} [\vec{G}] c'  & = & b^* P' (P'A P') ^\om [\vec{G}] P' c \\
& = & b^* A^\om [\vec{G}] P' c = b^* A^\om [\vec{G}] c. \ea  

We next define a new realization by compressing the controllable realization, $(A',b',c')_Y$, to its $A'$-co-invariant observable subspace, $\scr{O} _{A^{'\dag},b'}$. We first claim that $\scr{M} := \scr{O} _{A^{'\dag},b'} = \scr{M} _{A,b,c}$ is the minimal subspace of the original realization, $(A,b,c)_Y$. Indeed, $x \in \scr{C} _{A,c} \ominus \scr{O} _{A^{'\dag},b'}$, if and only if for any $\om \in \F$, 
\ba 0 & = & \ip{A^{'\dag;\om} [\vec{H}] b' (u)}{x} = \ip{(P' AP') ^{\dag; \om}[\vec{H}] P' b(u)}{x} \\
& = & \ip{P' A^{\dag;\om} [\vec{H}] b(u)}{x} = \ip{A^{\dag; \om} [\vec{H}] b(u)}{x}, \ea since $P'x =x$. It follows that $x \in \scr{C} _{A,c} \ominus \scr{O} _{A^{'\dag}, b'}$ if and only if $x \in \scr{C} _{A,c}$ is orthogonal to $\scr{O} _{A^\dag, b}$. This proves that 
$$\scr{O} _{A^{'\dag},b'} = \scr{C} _{A, c} \ominus \left( \scr{C} _{A,c} \cap \scr{O} _{A^\dag , b} ^\perp \right) = \scr{M} _{A,b,c}, $$ as claimed. 

Finally, let $Q_0$ be the orthogonal projection of $\cH$ onto the minimal subspace, $\scr{M} _{A,b,c}$, and consider the realization $(A^{(0)}, b_0,c_0)_Y$ obtained by compressing $(A,b,c)_Y$ to this minimal space.
Namely, $A^{(0)} _j (G) := Q_0 A_j (G) | _{\scr{M} _{A,b,c}}$, $b_0 = Q_0b$ and $c_0 = Q_0 c$. First we claim that $(A^{(0)},b_0,c_0) \sim _Y (A,b,c)$ is a matrix-centre realization of the same NC function. Indeed,
\ba b_0 ^* A^{(0);\om} [\vec{G}] c_0 & = & b^{'*} Q_0 (Q_0 A' Q_0 ) ^\om [\vec{G}] Q_0 c' \\
& = & b^{'*} Q_0 A^{'\om} [\vec{G}] Q_0 c' \\
& = & b^{'*} Q_0 A^{'\om} [\vec{G}] c ' \quad \quad \mbox{($Q_0$ is $A'-$co-invariant)} \\
& = & b^{'*}  A^{'\om} [\vec{G}] c' \quad \quad (b' = Q_0 b' = b_0) \\
& = & b^* A^\om [\vec{G}] c, \ea since $(A',b',c') \sim _Y (A,b,c)$ as calculated above. Moreover,
$$ \bigvee A^{(0) \dag;\om}[\vec{G}] b_0 (u)  =  \bigvee A^{'\dag; \om}[\vec{G}] b' (u) = \scr{M} _{A,b,c}, $$ follows since $b_0 = b'$, $A^{(0) \dag ; \om } [\vec{G}] = A^{(0); \om} [\vec{G} ^*]^*$, $\scr{M} _{A,b,c}$ is $A'-$co-invariant and $A^{'\dag} _j (G) | _{\scr{M}_{A,b,c}} = A^{(0)\dag} _j (G)$. Similarly,
\ba \bigvee A^{(0); \om} [\vec{G}] c_0 (v) & = & \bigvee Q_0 A^{'\om} [\vec{G}] Q_0 c' (v) \\
& = & Q_0 \bigvee A^{'\om} [\vec{G}] c' (v) = Q_0 \bigvee A^\om [\vec{G}] c (v) \\
& = & Q_0 \scr{C} _{A,c} = \scr{M} _{A,b,c}, \ea so that $(A^{(0)},b_0,c_0) _Y$ is both controllable and observable, hence minimal. 

If $h \sim_Y (A,b,c) \sim _Y (A^{(0)}, b_0, c_0)$, the final statement follows from the identity theorem in several complex variables. 
\end{proof}

\begin{cor} \label{Kalcompact}
If $A$ takes values in compact operators, $\scr{C} (\cH)$, then $\scr{D} ^Y (A^{(0)}) \supseteq \scr{D} ^Y (A)$.
\end{cor}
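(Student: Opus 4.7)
My plan is to exploit the $A$-invariant flag arising from the Kalman decomposition together with the Fredholm alternative for $I$ minus a compact operator. Set $\scr{N} := \scr{C}_{A,c} \cap \scr{O}_{A^\dag,b}^\perp$, so that $\scr{M}_{A,b,c} = \scr{C}_{A,c} \ominus \scr{N}$, and decompose $\cH = \scr{N} \oplus \scr{M}_{A,b,c} \oplus \scr{C}_{A,c}^\perp$. Both $\scr{C}_{A,c}$ and $\scr{N}$ are $A$-invariant: the first by construction, and the second because the proof of Theorem \ref{Kalman} identifies $\scr{M}_{A,b,c} = \scr{O}_{A^{'\dag},b'}$ as $A'$-co-invariant inside $\scr{C}_{A,c}$, so that its orthogonal complement $\scr{N}$ is $A'$-invariant, hence $A$-invariant. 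It follows that each $A_j(G)$ is upper-triangular with respect to this three-block decomposition of $\cH$, with middle diagonal block equal to $A^{(0)}_j(G) = Q_0 A_j(G)|_{\scr{M}_{A,b,c}}$.

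Ampliating by $\mr{id}_m$ and evaluating at $Z := X - I_m \otimes Y$, the linear pencil inherits this block form on $(\C^m \otimes \scr{N}) \oplus (\C^m \otimes \scr{M}_{A,b,c}) \oplus (\C^m \otimes \scr{C}_{A,c}^\perp)$:
\[ L_A(Z) = \begin{pmatrix} L_{A^{(\scr{N})}}(Z) & * & * \\ 0 & L_{A^{(0)}}(Z) & * \\ 0 & 0 & L_{A^{(\scr{C}^\perp)}}(Z) \end{pmatrix}, \]
where $A^{(\scr{N})}$ and $A^{(\scr{C}^\perp)}$ denote the top-left restriction and bottom-right compression of $A$. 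Because each $A_j$ is compact-valued, and compositions with bounded operators preserve compactness, all three diagonal pencils are $I$ minus a compact operator on the respective summands, and are therefore Fredholm of index zero.

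The concluding step is the standard observation that an invertible upper-triangular block operator matrix with Fredholm-index-zero diagonal blocks must have every diagonal block invertible. In the two-block model $T = \begin{pmatrix} T_1 & S \\ 0 & T_2 \end{pmatrix}$, any $x_1 \in \ker T_1$ produces $(x_1,0) \in \ker T$, which forces $T_1$ injective and hence invertible; then for any $x_2 \in \ker T_2$ one solves $T_1 x_1 = -S x_2$ to obtain $(x_1, x_2) \in \ker T$, forcing $T_2$ injective and invertible as well. A one-step induction handles the three-block case, and applied to $L_A(Z)$ yields that invertibility of $L_A(Z)$ implies invertibility of $L_{A^{(0)}}(Z)$, i.e. $\scr{D}^Y(A) \subseteq \scr{D}^Y(A^{(0)})$.

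The principal obstacle, and the precise role of the compactness hypothesis, lies in this last step: without compactness an invertible upper-triangular block operator can perfectly well have non-invertible diagonal blocks, so the Fredholm-index-zero property is indispensable for upgrading injectivity of each diagonal block to invertibility.
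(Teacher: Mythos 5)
Your proof is correct and follows essentially the same route as the paper's: both hinge on the observation that the linear pencil is block triangular with respect to the flag $\scr{N}\subset\scr{C}_{A,c}\subset\cH$ (whose middle quotient is the minimal subspace carrying $A^{(0)}$), and both use that each diagonal block is of the form identity minus a compact operator, so injectivity already forces invertibility. The only difference is cosmetic: the paper orders the summands as $\scr{M}_{A,b,c}\oplus\scr{N}\oplus\scr{C}_{A,c}^\perp$ and applies \cite[Lemma 3.16]{AMS-opreal} twice (once to the outer upper-triangular $2\times 2$, once to the inner lower-triangular $2\times 2$), whereas you reorder to $\scr{N}\oplus\scr{M}_{A,b,c}\oplus\scr{C}_{A,c}^\perp$ to get a single upper-triangular $3\times 3$ and replace the cited lemma with the short direct kernel argument.
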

We will use the following lemma.
\begin{lemma}{\cite[Lemma 3.16]{AMS-opreal}}
Let $T \in \scr{B} (\cH)$ have a closed, invariant subspace, $\cJ$, so that with respect to the orthogonal direct sum decomposition, $\cH = \cJ \oplus \cJ ^\perp$, 
$$ T \simeq \bpm T_1 & T_2 \\ 0 & T_3 \epm. $$
If $T$ is invertible then $T_1$ is injective, $T_1 ^*$ is surjective on $\cJ$, and $T_3 \simeq P_\cJ ^\perp T | _{\cJ ^\perp}$ is surjective on $\cJ ^\perp$. Moreover, $T$ and $T_1$ are both invertible if and only if $T$ and $T_3$ are both invertible, in which case 
$$ T^{-1} = \bpm T_1 ^{-1} & -T_1 ^{-1} T_2 T_3 ^{-1} \\ 0 & T_3 ^{-1} \epm, $$ so that $\cJ$ is also $T^{-1}-$invariant.
\end{lemma}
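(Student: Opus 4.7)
The plan is a direct verification exploiting the block-triangular structure throughout; no analytic input beyond the invertibility hypothesis is needed.

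First, I would establish the three individual assertions under the sole hypothesis that $T$ is invertible. For injectivity of $T_1$: if $x \in \cJ$ satisfies $T_1 x = 0$, then $x \oplus 0 \in \cH$ lies in $\ker T$, forcing $x = 0$. For surjectivity of $T_1^*$ onto $\cJ$: taking adjoints of the block form yields
\[ T^* \simeq \bpm T_1^* & 0 \\ T_2^* & T_3^* \epm, \]
so given $y \in \cJ$, invertibility of $T^*$ produces $z = z_1 \oplus z_2 \in \cH$ with $T^* z = y \oplus 0$, and the top component reads $T_1^* z_1 = y$. The identity $T_3 = P_{\cJ^\perp} T|_{\cJ^\perp}$ is immediate from the block form. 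For surjectivity of $T_3$ onto $\cJ^\perp$: given $w \in \cJ^\perp$, invertibility of $T$ yields $x = x_1 \oplus x_2$ with $T x = 0 \oplus w$, and the bottom component gives $T_3 x_2 = w$.

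Next, I would verify the equivalence $T_1$ invertible $\Leftrightarrow$ $T_3$ invertible, still under the standing assumption that $T$ is invertible. Suppose $T_1$ is invertible. Given $x_2 \in \ker T_3$, one has $T(0 \oplus x_2) = T_2 x_2 \oplus 0 \in \cJ$, so picking $x_1 \in \cJ$ with $T_1 x_1 = -T_2 x_2$ gives $T(x_1 \oplus x_2) = 0$, whence $x_2 = 0$; combined with the surjectivity of $T_3$ already established, this gives invertibility of $T_3$. Conversely, if $T_3$ is invertible, then for $y \in \cJ$ the preimage $x_1 \oplus x_2$ of $y \oplus 0$ under $T$ satisfies $T_3 x_2 = 0$, so $x_2 = 0$ and $T_1 x_1 = y$; this proves surjectivity of $T_1$, and combined with the injectivity above yields invertibility.

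Finally, I would verify the inverse formula by direct block multiplication: in the product
\[ \bpm T_1 & T_2 \\ 0 & T_3 \epm \bpm T_1^{-1} & -T_1^{-1} T_2 T_3^{-1} \\ 0 & T_3^{-1} \epm, \]
the top-right entry collapses to $-T_1 T_1^{-1} T_2 T_3^{-1} + T_2 T_3^{-1} = 0$ while the diagonal entries are $T_1 T_1^{-1} = I$ and $T_3 T_3^{-1} = I$; the opposite order is analogous. The block upper-triangular form of the resulting $T^{-1}$ then immediately gives invariance of $\cJ$ under $T^{-1}$. There is no real obstacle in this lemma; the only subtlety worth flagging is that in infinite dimensions invertibility of $T$ alone does not force invertibility of $T_1$ or $T_3$ individually, which is precisely why the equivalence statement must carry $T$ on both sides together with one of $T_1$ or $T_3$.
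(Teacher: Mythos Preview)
Your proof is correct and complete. Note, however, that the paper does not supply its own proof of this lemma: it is simply quoted from \cite[Lemma 3.16]{AMS-opreal} and used as a black box in the proof of Corollary~\ref{Kalcompact}. Your argument is the natural elementary verification one would expect, and it stands on its own as a self-contained proof of the cited result.
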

\begin{proof}[Proof of Corollary \ref{Kalcompact}]
If $X \in \scr{D} _m ^Y (A)$, $X \in \C ^{(mn \times mn) \cdot d}$, then with respect to the orthogonal decomposition, 
$$ \scr{H} = \scr{M} _{A,b,c} \oplus ( \scr{C} _{A,c} \ominus \scr{M} _{A,b,c} ) \oplus \scr{C} _{A,c} ^\perp, $$ we have that,
$$ A _j (X_j) =\begin{pmatrix}   \begin{array}{cc}  \cellcolor{blue!15} A ^{(0)} _j (X_j) & \cellcolor{green!15} 0   \\ \cellcolor{green!15} *  & \cellcolor{green!15} A^{(1)} _j (X_j) \end{array} &  \mbox{\Large $*$} \\  \begin{array}{cc}   0 \quad   &   \quad 0   \\ 0  \quad   &  \quad 0 \end{array} &  \mbox{\Large $A ^{(2)} _j (X_j)$ } \epm. $$

Hence, the entire operator, $A_j (X_j)$, is block upper triangular with respect to the orthogonal decompostion $\cH = \scr{C} _{A,c} \oplus \scr{C} _{A,c} ^\perp$, while the upper left block corresponding to the decomposition of $\scr{C} _{A,c}$ is block lower triangular. Let, $A' _j (X_j)$ denote this upper left block so that 
$$ A_j (X_j) = \bpm A' _j (X_j) & * \\ 0 & A ^{(2)} _j (X_j) \epm. $$ 
By the previous lemma, if $X \in \scr{D} _m ^Y (A)$, so that $L_A (X)$ is invertible, it follows that $L _{A'} (X)$ is injective and $L _{A^{(2)}} (X)$ is surjective. However, if $L_{A'} (X) = I_m \otimes I_{\cH'} - A' (X)$ is not invertible, it is not surjective, so that $\la =1$ belongs to the spectrum of $A'(X)$. Since each $A_j (X_j)$ is compact, it follows that $A' (X) = \sum _{j=1} ^d A' _j (X_j)$ is compact, so that any non-zero point in its spectrum is an eigenvalue. This contradicts injectivity of $L_{A'} (X)$ and we conclude that $L_{A'} (X)$ is invertible. Similarly, since $L_{A'} (X)$ is invertible, we must have that $L_{A^{(0)}} (X)$ is surjective and $L_{A^{(1)}} (X)$ is injective by the previous lemma. Again, both $A^{(0)} (X)$ and $A^{(1)} (X)$ are compact, so that both $L_{A^{(0)}} (X)$ and $L_{A^{(1)}} (X)$ must be invertible. Hence $X \in \scr{D} _m ^Y (A^{(0)})$ and $\scr{D} ^Y (A) \subseteq \scr{D} ^Y ( A^{(0)} )$.
\end{proof}

\section{Translations} \label{sec:translate}

\subsection{Translations of one-dimensional realizations}

Let $f\sim _0 (A,b,c) \in \scr{B}(\cH) \times \cH \times \cH$ be a one-dimensional realization centred at $0 \in \C$. We will simply write $(A,b,c) = (A,b,c)_0$, in this case.  Then, as described in the introduction, for any $z \in \scr{D} _1 (A) \subseteq \C \sm \{ 0 \}$, we can construct a realization of $f$ centred at $z$, $f \sim  (A',b',c') _{z}$,
$$ A' := (I-z A) ^{-1} A, \quad b' = b, \quad \mbox{and} \quad c' = (I-z A) ^{-1} c. $$
\begin{thm}[Minimal translations]
If $f \sim (A,b,c) \in \scr{B} (\cH) \times \cH \times \cH$ is a minimal realization at $0 \in \C$, then $f \sim _{z} (A',b',c')$ is also a minimal realization of $f$ at $z \in \C \sm \{ 0 \}$. 
\end{thm}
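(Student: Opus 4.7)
My plan is to verify controllability and observability of $(A',b',c')$ separately. Both arguments rest on the elementary identities
\begin{align*}
(I-wA')^{-1}c' = (I-(z+w)A)^{-1}c, \qquad (I-wA'^*)^{-1}b = (I-(\bar z+w)A^*)^{-1}(I-\bar z A^*)b,
\end{align*}
valid for $w$ in a neighbourhood of $0$, which follow by direct computation from $A'=TA$, $c'=Tc$, and the fact that $A$ (respectively $A^*$) commutes with $T := (I-zA)^{-1}$ (respectively with $T^* = (I-\bar z A^*)^{-1}$).

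For controllability, I would take $v \perp \scr{C}_{A',c'}$, so that $\langle v, A'^n c'\rangle = 0$ for every $n \geq 0$. Summing the convergent power series yields $\langle v, (I-wA')^{-1}c'\rangle = 0$ for $w$ near $0$, and the first identity recasts this as the vanishing of the analytic function $\zeta \mapsto \langle v, (I-\zeta A)^{-1}c\rangle$ on a neighbourhood of $\zeta = z$. This function is analytic on the open set $\{\zeta \in \C : I-\zeta A \text{ is invertible}\}$, so by the identity theorem applied along a path from $z$ to $0$ inside this open set, it vanishes on a neighbourhood of $0$ as well. Taylor-expanding at $0$ then gives $\langle v, A^n c\rangle = 0$ for every $n$, and minimality of $(A,b,c)$ forces $v = 0$.

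For observability, I would take $u \perp \scr{O}_{A^{'\dag},b'}$. The case $n=0$ of $\langle u, A'^{*n}b\rangle = 0$ immediately yields $\langle u, b\rangle = 0$. The parallel generating-function argument, using the second identity, shows that $\zeta \mapsto \langle u, (I-\zeta A^*)^{-1}(I-\bar z A^*)b\rangle$ vanishes near $\bar z$, and hence, by analytic continuation, also near $0$. Expanding its Taylor series at $0$ produces the two-term recurrence $\langle u, A^{*n}b\rangle = \bar z \langle u, A^{*(n+1)}b\rangle$ for every $n \geq 0$. Combined with the initial condition $\langle u,b\rangle = 0$ and the hypothesis $z \neq 0$, a straightforward induction forces $\langle u, A^{*n}b\rangle = 0$ for all $n$, so observability of $(A,b,c)$ concludes $u = 0$.

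The principal subtlety will be the analytic-continuation step, which requires $0$ and $z$ (respectively $\bar z$) to lie in the same path-connected component of the invertibility domain of $A$ (respectively $A^*$). When that domain is connected this is automatic; in general one must work on the path-connected component of $z$, which is precisely the region on which $(A',b',c')_z$ defines the same analytic germ as $(A,b,c)_0$, so the equivalence of Taylor coefficients at $0$ still propagates.
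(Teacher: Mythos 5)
Your route is genuinely different from the paper's. The paper works directly at the level of the controllable subspace: starting from $\scr{C}_{A',c'}=\bigvee_{j\geq 0}A^j(I-zA)^{-j-1}c$, it shows algebraically that $\scr{C}_{A',c'}\supseteq\bigvee_{j\geq 1}(I-zA)^{-j}c$, then applies the Riesz--Dunford holomorphic functional calculus to the invertible operator $T=(I-zA)^{-1}$ to bring $(I-zA)^{n}c$, $n\geq 0$, into $\scr{C}_{A',c'}$, and finally peels off $A^{n}c$ by binomial expansion. It never moves a base point for the resolvent. Your argument instead dualizes: take $v\perp\scr{C}_{A',c'}$, form the generating function $\zeta\mapsto\langle v,(I-\zeta A)^{-1}c\rangle$, observe that it vanishes near $z$, and analytically continue to $0$.

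This is where your proposal has a real gap, which you notice but do not close. The open set $\scr{D}_1(A)=\{\zeta\in\C: I-\zeta A\text{ invertible}\}$ can be disconnected --- the paper's own bilateral-shift example has $\scr{D}_1(U)=\C\setminus\partial\D$ --- and if $0$ and $z$ (respectively $\bar z$ in the observability half) lie in different path-components, the identity theorem does not propagate the vanishing from one to the other, and the conclusion $\langle v,A^{n}c\rangle = 0$ does not follow. Your closing sentence, that one may ``work on the path-connected component of $z$'' because it is the region on which the two realizations define the same germ, does not repair this: minimality of $(A',b',c')_z$ is a statement about the closed subspaces $\scr{C}_{A',c'}$ and $\scr{O}_{A'^\dag,b'}$ of $\cH$, not about where the two transfer functions happen to agree, so restricting attention to a component gives you no way to conclude $v=0$. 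The paper's subspace/functional-calculus argument is designed precisely to avoid traversing a path from $z$ to $0$. To complete your proof along its current lines, you would need to either add the hypothesis that $0$ and $z$ lie in the same path-component of $\scr{D}_1(A)$, or replace the analytic-continuation step by a functional-calculus argument in the spirit of the paper.
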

\begin{proof}
We prove controllability, observability is analogous. The controllable subspace of $(A',b',c')$ is
$$ \scr{C} _{A',c'} = \bigvee _{j=0} ^\infty A^{'j} c' = \bigvee _{j=0} ^\infty A^j (I-zA)^{-j-1} c. $$ 
Hence, $(I-zA) ^{-1} c \in \scr{C} _{A',c'}$, and 
$$ zA(I-zA) ^{-2} c = (I-zA) ^{-2}c - (I-zA) ^{-1} c \in \scr{C} _{A',c'}. $$ In particular it follows that $\scr{C} _{A',c'}$ contains the linear span of $(I-zA) ^{-2} c$ and $(I-zA) ^{-1}c$. Iterating this argument yields
$$ \scr{C} _{A',c'} \supseteq \bigvee _{j=1} ^\infty (I-zA) ^{-j} c. $$ Set $T := (I-zA) ^{-1}$, a bounded, invertible operator and choose any $\la \in \C$ so that $| \la | > \| T \|$. Then, $\la I - T$ is invertible and 
$$ (\la I - T ) ^{-1} = \sum _{n=0} ^\infty \la ^{-n-1} T ^n, $$ is a convergent geometric series since $\| \la ^{-1} T \| = |\la | ^{-1} \| T \| < 1$. Hence, 
$$ \scr{C} _{A',c'} \supseteq \bigvee _{|\la| > \| T \|} (\la I -  T ) ^{-1} T c. $$ Since $T$ is invertible, $0 \notin \sigma (T)$, so that the functions $e_{-n} (\la) := 1/\la^n$, $n \in \N$ are analytic in an open set containing the spectrum of $T$, $e_{-n} \in \scr{O} (\sigma (T))$. Hence, by the Riesz--Dunford holomorphic functional calculus,
$$ T ^{-n} = \frac{1}{2\pi i} \cint _{r\cdot \partial \D} \la ^{-n} (\la I - T ) ^{-1} d\la; \quad \quad r > \| T \|, $$ so that 
\ba \scr{C} _{A',c'} & \supseteq & \bigvee _{n \in \N} T ^{-n} T c \\
& = & \bigvee _{n=0} ^\infty T ^{-n} c. \\
& = & \bigvee _{n=0} ^\infty (I-zA) ^{n} c. \ea 
Hence, $c \in \scr{C} _{A',c'}$ and 
$$ (I-zA) c = c -zAc \in \scr{C} _{A',c'}, $$ so that $Ac \in \scr{C} _{A',c'}$. Continuing in this manner,
$$ (I-zA) ^{2} c = c -2zA C +z^2 A^2 c \in \scr{C} _{A',c'}, $$ so that $A^2 c \in \scr{C} _{A',c'}$, and we conclude that 
$$ \scr{C} _{A',c'} \supseteq \bigvee _{n=0} ^\infty A^n c = \scr{C} _{A,c} = \cH, $$ so that $(A',b',c')_{z}$ is controllable. 
\end{proof}

\subsection{Translations of matrix-centre realizations} \label{ss:mtrans}

More generally, given a matrix-centre realization $f \sim (A,b,c)_Y$, with $Y \in \cdn$ and $X,Z \in \scr{D} _m ^Y (A)$, we have that 
\ba f(Z) & = & I_m \otimes b^* L_A (Z - I_m \otimes Y) ^{-1}  I_m \otimes c  \\
& = & I_m \otimes b^* (I_m \otimes I_\cH - A (Z-I_m\otimes Y)  ) ^{-1} I_m \otimes c \\
& = & I_m \otimes b^* ( I_m \otimes I - (\mr{id} _m \otimes A) (Z-X) - (\mr{id} _m \otimes A) (X-I_m \otimes Y) ) ^{-1} I_m \otimes c \\ 
& = & I_m \otimes b^* \left( I_m \otimes I - L_A (X-I_m \otimes Y) ^{-1} (\mr{id} _m \otimes A) (Z-X) \right) ^{-1} L_A (X-I_m \otimes Y) ^{-1} I_m \otimes c \\
& = : & b^{'*} L_{A'} (Z-X) ^{-1} c',  \ea 
where $b', c' \in \scr{B} (\C ^{mn}, \cH')$, $A' _j : \C ^{mn \times mn} \rightarrow \scr{B} (\cH ')$ are linear maps, $\cH' := \C ^m \otimes \cH$, and for any $G \in \C ^{mn \times mn}$,
$$ A' _j (G) := L_{A} (X - I_m \otimes Y) ^{-1} (\mr{id}_m \otimes A_j) (G), \quad b' = I_m \otimes b, \quad \mbox{and} \quad c' = L_A (X- I_m \otimes Y) ^{-1} I_m \otimes c. $$ Here, for any $u \in \C ^m$ and $v \in \C ^n$, $b'$ is defined on $\C ^{mn} \simeq \C ^m \otimes \C ^n$ via 
$$ I_m \otimes b (u \otimes v) := u \otimes b (v) \in \C ^m \otimes \cH. $$ 
This is essentially the same calculation as in \cite[Section 2.7, Theorem 2.21]{PV1}. Hence we have shown that $f \sim (A',b',c') _X$.

\begin{thm}[Minimal matrix-centre translations] \label{matrixtrans}
If $f \sim (A,b,c) _Y$ is a minimal realization at $Y \in \cdn$, and $X \in \scr{D} ^{Y} _m (A)$  then $(A',b',c')_{X}$ is also a minimal realization of $f$ at $X$ so that $\scr{D} ^X _k (A') = \scr{D} ^Y _{km} (A)$ for all $k \in \N$.
\end{thm}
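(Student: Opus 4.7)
I would prove the two claims separately. For the invertibility domain equality $\scr{D}^X_k(A') = \scr{D}^Y_{km}(A)$, the key ingredient is the algebraic factorization
\[
L_A(Z - I_{km} \otimes Y) = \bigl(I_k \otimes L_A(X - I_m \otimes Y)\bigr) \cdot L_{A'}(Z - I_k \otimes X),
\]
valid for every $Z \in \C^{(kmn \times kmn) \cdot d}$. I would derive this by splitting $Z - I_{km} \otimes Y = (Z - I_k \otimes X) + (I_k \otimes X - I_{km} \otimes Y)$, using the ampliation identity $L_A(I_k \otimes X - I_{km} \otimes Y) = I_k \otimes L_A(X - I_m \otimes Y)$, and the defining relation $A'_j(G) = L_A(X - I_m \otimes Y)^{-1}(\mr{id}_m \otimes A_j)(G)$. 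Since $L_A(X - I_m \otimes Y)$ is invertible by hypothesis, the two linear pencils are invertible simultaneously.

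For minimality, set $T := L_A(X - I_m \otimes Y)^{-1}$, $D := X - I_m \otimes Y$, and $\hat{A}_j := \mr{id}_m \otimes A_j$, and note the key identity $T\hat{A}(D) = T - I$, immediate from $T^{-1} = I - \hat{A}(D)$. I would prove controllability; observability follows by a parallel (somewhat easier) dual argument, since $b' = I_m \otimes b$ carries no $T^*$ factor. An averaging trick---summing $A'^{j_1 \cdots j_k}(D_{j_1}, \ldots, D_{j_k}) c'(v)$ over $(j_1, \ldots, j_k) \in \{1, \ldots, d\}^k$---produces $(T-I)^k T(I_m \otimes c)(v) \in \scr{C}_{A', c'}$. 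Expanding the binomial and inducting on $k$ shows $T^n(I_m \otimes c)(v) \in \scr{C}_{A', c'}$ for every $n \geq 1$, and the same averaging applied to arbitrary $\xi \in \scr{C}_{A', c'}$ also shows that $\scr{C}_{A', c'}$ is $T$-invariant. Mirroring the functional calculus argument of the preceding univariate translation theorem, analytic continuation of $(\la I - T)^{-1} T(I_m \otimes c)(v)$ modulo $\scr{C}_{A', c'}$ from $\{|\la| > \|T\|\}$ into the resolvent set, combined with a Riesz--Dunford contour integration around $\sigma(T)$, then recovers $T^{-n}(I_m \otimes c)(v) \in \scr{C}_{A', c'}$ for every $n \geq 0$; in particular $(I_m \otimes c)(v) \in \scr{C}_{A', c'}$. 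Expanding $T^{-1} = I - \hat{A}(D)$ places all polynomials in $\hat{A}(D)$ applied to $(I_m \otimes c)(v)$ inside $\scr{C}_{A', c'}$.

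To promote these to arbitrary $\hat{A}^\om(\vec{G})$-orbits, I would repeat the averaging trick with one $G_i \in \C^{mn \times mn}$ held free and the remaining coordinates set to $D_{j_i}$, obtaining $T \hat{A}_j(G) T^n(I_m \otimes c)(v) \in \scr{C}_{A', c'}$. Another round of the functional calculus strips the leading $T$, yielding $\hat{A}_j(G) \hat{A}(D)^\ell (I_m \otimes c)(v) \in \scr{C}_{A', c'}$ and, after iterating on word length, $\hat{A}^\om(\vec{G})(I_m \otimes c)(v) \in \scr{C}_{A', c'}$ for every $\om$, $\vec{G}$, and $v$. A matrix-unit expansion $G = \sum_{\alpha, \beta} E_{\alpha, \beta} \otimes G^{(\alpha, \beta)}$ with $G^{(\alpha, \beta)} \in \C^{n \times n}$ reduces the remaining problem to the controllability of $(A, b, c)_Y$, giving $\scr{C}_{A', c'} \supseteq \C^m \otimes \scr{C}_{A, c} = \cH'$. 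The main technical obstacle is the functional calculus step: when $\sigma(T)$ surrounds $0$, the enclosing contour must avoid $0$ and the analytic continuation of $(\la I - T)^{-1} T(I_m \otimes c)(v)$ beyond $\{|\la| > \|T\|\}$ requires care. This can be handled either by working in the quotient $\cH'/\scr{C}_{A', c'}$, on which $T$ induces a well-defined operator via the established $T$-invariance, or by choosing a pair of contours in the appropriate components of the resolvent set and combining the resulting Riesz--Dunford integrals.
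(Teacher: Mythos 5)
Your argument follows the paper's proof step for step: the same linear-pencil factorization $L_A(Z - I_{km}\otimes Y) = L_{A'}(Z - I_k\otimes X)\,(I_k\otimes L_A(X - I_m\otimes Y))$ for the invertibility-domain identity, the same averaging-over-$j$ device combined with the identity $\La T_{\mathrm{paper}} = \La - I$ (your $T\hat{A}(D) = T - I$) to put $\La^j(I_m\otimes c)(v)$ into $\scr{C}_{A',c'}$ for $j\geq 1$, and the same Riesz--Dunford step to pass to non-positive powers of $\La$. You also correctly observe that the iteration with one free $G_i$ and a matrix-unit expansion finishes the reduction to the controllability of $(A,b,c)_Y$, mirroring the paper.

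You are right, however, to flag the functional-calculus step as the sticking point, and this concern applies equally to the paper's own argument, which writes $\La^{-j} = \tfrac{1}{2\pi i}\cint_{r\partial\D}\la^{-j}(\la I - \La)^{-1}\,d\la$ with $r > \|\La\|$. That contour has winding number one about the origin, so a direct computation (expand $(\la I - \La)^{-1}$ in the geometric series and integrate term by term) gives zero, not $\La^{-j}$, for $j\geq 1$; the admissible Riesz--Dunford contour for $h(\la)=\la^{-j}$ must have winding number zero about $0$. What the argument really uses is that $P_{\scr{C}_{A',c'}^\perp}(\la I - \La)^{-1}\La(I_m\otimes c)(v)$, which vanishes on $\{|\la| > \|\La\|\}$, continues to vanish along a cycle that encircles $\sigma(\La)$ but not $0$. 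By the vector-valued identity theorem this continuation is available on the unbounded component of $\rho(\La)$, but it is not automatic when $\sigma(\La)$ separates $0$ from $\infty$. Your two proposed repairs do not yet close this: the induced operator $\bar{\La}$ on $\cH'/\scr{C}_{A',c'}$ is surjective (by the triangular-block lemma, since $\La$ is invertible and $\scr{C}_{A',c'}$ is $\La$-invariant) but not obviously injective, so $\bar{\La}^j[\xi] = 0$ for $j\geq 1$ does not by itself force $[\xi]=0$; and the two-contour decomposition $r_1\partial\D \cup (-\epsilon\partial\D)$ would require the integrand's projection to $\scr{C}_{A',c'}^\perp$ to vanish on the small inner circle, which is exactly what one is trying to prove. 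If you want to make the argument airtight, this is the point that needs a new idea; the remainder of your outline is sound and agrees with the paper.
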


In the case where $(A,b,c)_Y$ and $(A',b',c')_X$ are finite--dimensional matrix-centre realizations of an NC function (and hence are matrix-centre realizations of an NC rational function), this was established by Porat and Vinnikov in the proof of \cite[Theorem 2.21]{PV1}. Since our realizations are generally infinite--dimensional, a different proof approach is required.

\begin{proof}
The controllable subspace of $(A',b',c')_X$ is 
$$ \scr{C} _{A',c'} = \bigvee _{\substack{ \om \in \F; \ v \in \C ^{mn} \\ \vec{G} \in \C ^{mn\times mn} \otimes \C ^{1 \times |\om |}}} \left( L_A (X - I_m \otimes Y) ^{-1} A\right) ^\om (\vec{G}) L_A (X-I_m \otimes Y) ^{-1} (I_m \otimes c) (v), $$ and we want to prove that this is equal to $\C ^{m} \otimes \cH$. Setting $T := A (X - I_m \otimes Y)$, $L := (I - T )  = L_A (X - I_m \otimes Y)$ and $\La := L ^{-1} $, we have that 
$$ \scr{C} _{A',c'} \supseteq \bigvee _{\substack{ \om \in \F; \ v \in \C ^{mn} \\ \vec{G} \in \C ^{mn\times mn} \otimes \C ^{1 \times |\om |} \\ 1 \leq j \leq d }} (\La A ) ^\om (\vec{G}) \La (\mathrm{id}_m\otimes A_j )(X_j - I_m \otimes Y_j)  \La  (I_m \otimes c) (v), $$ and hence, by summing over $j$,
$$ \scr{C} _{A',c'} \supseteq \bigvee _{\substack{ \om \in \F; \ v \in \C ^{mn} \\ \vec{G} \in \C ^{mn\times mn} \otimes \C ^{1 \times |\om |} }} (\La A ) ^\om (\vec{G}) \La  \underbrace{A (X - I_m \otimes Y)}_{=T}  \La  (I_m \otimes c) (v). $$ Iterating this argument yields that
\be \scr{C} _{A',c'} \supseteq \bigvee _{\substack{ \om \in \F; \ v \in \C ^{mn} \\ \vec{G} \in \C ^{mn\times mn} \otimes \C ^{1 \times |\om |} \\ j \in \N \cup \{0 \}}} (\La A ) ^\om (\vec{G}) (\La T)^j \La (I_m \otimes c). \label{powers} \ee
Using that $T$ and $\La = (I-T) ^{-1}$ commute and $T \La = T (I-T)^{-1} = (I-T) ^{-1} - I = \La - I$, the previous formula becomes
$$ \mbox{(\ref{powers})}= \bigvee _{\substack{ \om \in \F; \ v \in \C ^{mn} \\ \vec{G} \in \C ^{mn\times mn} \otimes \C ^{1 \times |\om |} \\ j \in \N}} (\La A ) ^\om (\vec{G}) \La ^{j} (I_m \otimes c). $$   In particular, it follows as before that 
$$ \scr{C} _{A',c'} \supseteq \bigvee _{\substack{ \om \in \F; \ v \in \C ^{mn} \\ \vec{G} \in \C ^{mn\times mn} \otimes \C ^{1 \times |\om |} \\ |\la | > \| \La \|}} (\La  A ) ^\om (\vec{G}) (\la I - \La ) ^{-1} \La (I_m \otimes c) (v), $$ since for any $\la \in \C$ so that $|\la | > \| \La \|$, 
$$ (\la I - \La )^{-1} = \sum _{n=0} ^\infty \la ^{-n-1} \La ^n, $$ converges in operator norm. Again, by the Riesz--Dunford functional calculus, since $\La$ is invertible, $0 \notin \sigma (\La)$ and $1/ \la ^j$ is holomorphic in an open neighbourhood of the spectrum of $\La$ so that 
\begin{eqnarray} \scr{C} _{A',c'} & \supseteq & \bigvee _{h \in \scr{O} (\sigma (\La))} (\La  A ) ^\om (\vec{G}) h (\La ) \La (I_m \otimes c) (v) \nn \\
& \supseteq & \bigvee _{j=1} ^\infty (\La  A ) ^\om (\vec{G}) \La ^{-j}  \La (I_m \otimes c) (v) \nn \\
& = & \bigvee _{j=0} ^\infty (\La  A ) ^\om (\vec{G}) \La ^{-j}  (I_m \otimes c) (v) \nn \\
& \supseteq & \bigvee (\La  A ) ^\om (\vec{G})  (I_m \otimes c) (v) \label{step1} \\
& \supseteq & \bigvee (I_m \otimes c) (v). \nn \end{eqnarray} 
We now iterate this argument. Fix any $1 \leq k \leq d$ and any $G \in \C^{mn\times mn}$. By Equation (\ref{step1}),
\ba \scr{C} _{A',c'} & \supseteq & \bigvee (\La  A ) ^\om (\vec{G})  (I_m \otimes c) (v) \\
& \supseteq & \bigvee  (\La  A ) ^\om (\vec{G}) \La (\mr{id} _m \otimes A_k) (G) (I_m \otimes c) (v) \\
& \supseteq &  \bigvee _{\substack{1\leq j \leq d; \ v \in \C ^{mn} \\ G \in \C ^{mn \times mn}}}(\La  A ) ^\om (\vec{G}) \La (\mr{id} _m \otimes A_j) ( X_j - I_m \otimes Y_j) \La (\mr{id} _m \otimes A_k) (G) (I_m \otimes c) (v) \\ 
& \supseteq & \bigvee  (\La A) ^\om (\vec{G}) \La T \La  (\mr{id} _m \otimes A_k) (G) (I_m \otimes c) (v), \ea
where the last formula is obtained by summing over $1\leq j \leq d$. Iterating this argument then yields
$$ \scr{C} _{A',c'}  \supseteq  \bigvee _{j=0} ^\infty (\La  A ) ^\om (\vec{G}) (\La T) ^j \La (\mr{id} _m \otimes A_k) (G) (I_m \otimes c) (v). $$ Again, we now use that $\La, T$ commute and that $\La T = \La - I$ to conclude that 
$$ \scr{C} _{A',c'}  \supseteq  \bigvee _{j=1} ^\infty (\La  A ) ^\om (\vec{G}) \La ^j (\mr{id} _m \otimes A_k) (G) (I_m \otimes c) (v), $$ and hence as before,
\ba
\scr{C} _{A',c'} & \supseteq &  \bigvee _{|\la| > \| \La \| } (\La  A ) ^\om (\vec{G}) (\la I - \La ) ^{-1} \La  (\mr{id} _m \otimes A_k) (G) (I_m \otimes c)  (v) \\
& \supseteq & \bigvee _{k=0} ^\infty (\La  A ) ^\om (\vec{G}) \La ^{-j} (\mr{id} _m \otimes A_k) (G) (I_m \otimes c) (v) \\
& \supseteq & \bigvee  (\La  A ) ^\om (\vec{G})  (\mr{id} _m \otimes A_k) (G) (I _m \otimes c) (v). \ea 
In particular, 
$$ \C ^m \otimes \bigvee _{\substack{G \in \C ^{n\times n} \\ v \in \C ^n}} A_j (G) c (v) + \C ^m \otimes \bigvee _{v \in \C ^n} c (v) \subseteq \scr{C} _{A',c'}, $$ and iterating this argument shows that for any $\om \in \F$, $\scr{C} _{A',c'}$ contains $\C ^m \otimes \nbran A^\om c.$ In conclusion, 
$$ \C ^m \otimes \cH = \C ^m \otimes \scr{C} _{A,c} = \C ^m \otimes \bigvee _{\substack{ \om \in \F; \ v \in \C ^{n} \\ \vec{G} \in \C ^{n\times n} \otimes \C ^{1 \times |\om |}}} A^\om (\vec{G}) c (v) \subseteq \scr{C} _{A',c'}, $$ so that $(A',b',c')_X$ is controllable. Proof of observability now follows from a symmetric, analogous argument and is omitted.

The final statement will follow from our construction of $(A',b',c') _X$. If $Z \in \scr{D} ^Y _{km} (A)$, then, since $X \in \scr{D} ^Y _m (A)$, by assumption,
\ba L_A (Z - I_{km} \otimes Y) ^{-1} & = & \left( I _{\cH} \otimes I _{km} - A (I _k \otimes X - I _{km} \otimes Y) - A (Z - I_k \otimes X) \right) ^{-1} \\
& = & \left( I \otimes I - I_k \otimes A (X - I_m \otimes Y) - A (Z - I_k \otimes X) \right) ^{-1} \\
& = & \left( I \otimes I - I_k \otimes L_A (X-I_m \otimes Y) ^{-1} A (Z-I_k \otimes X) \right) ^{-1} I_k \otimes L_{A} ( X-I_m \otimes Y ) ^{-1} \\
& = & L_{A'} (Z-I_k \otimes X) ^{-1} I_k \otimes L_A (X-I_m \otimes Y) ^{-1}. \ea That is, under the assumption that $X \in \scr{D} ^Y _{m} (A)$ so that $L_A (X - I_{m} \otimes Y)$ is invertible, invertibility of $L_A (Z - I_{km} \otimes Y)$ is equivalent to invertibility of $L_{A'} (Z-I_k \otimes X)$. That is, $Z \in \scr{D} ^Y _{km} (A)$ if and only if $Z \in \scr{D} ^X _k (A')$. 
\end{proof}

\section{NC transfer functions and the Lost-Abbey conditions}\label{sec:LAC}

Given a power series of multi-linear maps on $\cdn$ of the form, 
$$ \wt{f} (X) := \sum _{\ell =0} ^\infty \hat{f} _{\ell} \circ \left( X - I_m \otimes Y \right) ^{\odot ^\ell}, $$ where $X \in \C ^{(mn \times mn) \cdot d}$, each $\hat{f} _\ell$ is $\ell-$linear, and $\odot$ denotes the Haagerup tensor product for the $d-$dimensional column operator space over $\C^{n\times n}$, one can ask when this is equal to a uniformly analytic NC function, assuming it converges absolutely in $r \cdot \B ^d _{\N n} (Y)$ for some $r>0$. The answer, given in \cite[Chapter 8, Theorem 8.11]{KVV}, is that it is neccessary and sufficient that the sequence of multi-linear maps obey the \emph{Lost--Abbey conditions} or \emph{canonical intertwining conditions} at $Y$, denoted by $\mr{LAC}(Y)$ or $\mr{CIC} (Y)$. Namely, given any $H^{(i)} \in \cdn$, and $T \in \C ^{n\times n}$,
\be [T, \hat{f} _0]  = \hat{f} _1 \left( [ T, Y] \right), \ee
and for any $\ell \geq 1$,
\begin{align}
 &T \hat{f} _\ell ( H^{(1)}, \cdots, H^{(\ell)} ) - \hat{f} _\ell (T H^{(1)}, \cdots, H^{(\ell)} )  =  
 \hat{f} _{\ell +1} ( [T,Y], H^{(1)}, \cdots, H^{(\ell)} ), \\
 &\hat{f} _\ell ( H^{(1)}, \cdots, H ^{(j-1)}, H^{(j)}T, H^{(j+1)}, \cdots, H^{(\ell)} ) -  
\hat{f} _\ell ( H^{(1)}, \cdots, H^{(j)}, T H^{(j+1)}, H^{(j+2)}, \cdots, H^{(\ell)} )  \nonumber \\
&= \hat{f} _{\ell +1} ( H^{(1)}, \cdots, H ^{(j)}, [T,Y], H^{(j+1)}, \cdots H^{(\ell)} ), \quad \quad \mbox{and}  \\
&\hat{f} _\ell ( H^{(1)}, \cdots, H^{(\ell)} T ) - \hat{f} _\ell ( H^{(1)}, \cdots, H^{(\ell)} )T = 
\hat{f} _{\ell +1} ( H^{(1)}, \cdots, H^{(\ell)}, [T,Y] ). \end{align} In the above $[T,Y] = TY-YT = ([T,Y_1], \cdots, [T, Y_d])$ denotes the component-wise commutator. 

Equivalently, the Lost--Abbey conditions at $Y$ can be given for the terms of a power series of multilinear maps indexed by the free monoid,
$$ \sum _{\om \in \F} \hat{f} _\om \circ \left( X - I_m \otimes Y \right) ^{\odot ^{\om}}, $$ where each $\hat{f} _\om$ is $|\om |-$linear. The conditions become: Given $H_i \in \C ^{n\times n}$ and setting $\ell := | \om |$, $\om \neq \emptyset$,
\begin{align}
& [T, \hat{f} _\emptyset] = \sum _{j=1} ^d \hat{f} _j ([T, Y_j]) \label{LACy1} \\
 &T \hat{f} _\om ( H_1, \cdots, H_\ell ) - \hat{f} _\om (T H_1, \cdots, H_\ell )  =  
 \sum _{j=1} ^d \hat{f} _{j \om} ( [T,Y_j], H_1, \cdots, H_\ell ) \label{LACy2} \end{align}
\begin{align}
 &\hat{f} _\om ( H_1, \cdots, H_{k-1}, H_{k} T, H_{k+1}, \cdots, H_{\ell} ) -  
\hat{f} _\om ( H_1, \cdots, H_k, T H_{k+1}, H_{k+2}, \cdots, H_\ell )  \nonumber \\
&= \sum _{j=1} ^d \hat{f} _{i_1 \cdots i_k  j i_{k+1} \cdots i_\ell } ( H_1, \cdots, H_k, [T,Y_j], H_{k+1}, \cdots H_\ell ) \label{LACy3}  \\
&\hat{f} _\om ( H_1, \cdots, H_\ell T ) - \hat{f} _\om ( H_\om, \cdots, H_\ell )T = 
\sum _{j=1} ^d \hat{f} _{\om j} ( H_1, \cdots, H_\ell, [T,Y_j] ), \label{LACy4} \end{align}
see \cite[Equations (4.14--4.17)]{KVV}, \cite[Equations (1.13--1.16)]{PV2}.

If $\wt{f} \sim _Y (A,b,c)$ is the quantized transfer function of a matrix-centre realization at $Y \in \cdn$, recall that it can expanded as a power series of multi-linear maps,
\ba \wt{f} (X) & = & I_m \otimes b^* L_A (X- I_m \otimes Y) ^{-1} I_m \otimes c  \\
& = & \sum _{\om \in \F} I_m \otimes b^* (\mr{id} _m \otimes A ^\om) \circ (X - I_m \otimes Y) ^{\odot ^\omega} I_m \otimes c, \ea as described in Subsection \ref{ss:realTT}. Assuming $(A,b,c)_Y$ is minimal, we can then write the Lost Abbey conditions for the sequence of multi-linear maps, $(b^* A^\om (\cdot ) c )_{\om \in \F}$, in terms of the realization, $(A,b,c)_Y$. The theorem below was proven for finite--dimensional matrix-centre realizations of NC rational functions by Porat and Vinnikov in \cite[Lemma 2.2, Theorem 2.6]{PV2}. We provide the proof below, for the convenience of the reader, although the proof is essentially the same as the proofs in \cite{PV2}. (The reference \cite{PV2} works with finite--dimensional Fornasini--Marchesini matrix-centre realizations while the theorem below is stated in terms of descriptor matrix-centre realizations which are generally infinite--dimensional. We have not yet introduced Fornasini--Marchesini realizations, we will do this in Section \ref{sec:FM}.) 

\begin{thm} \label{realLAC}
Let $f \sim _Y (A,b,c)$, where $(A,b,c) _Y$ is a minimal matrix-centre realization at $Y \in \cdn$. Then $f$ is an NC function if and only if $(A,b,c)_Y$ obeys the \emph{linearized Lost Abbey conditions} at $Y$:
For any $G, H, T \in \C ^{n\times n}$,
\begin{align}
[T, b^*c] &= b^* A([T,Y]) c, \label{LAC1} \\
Tb^* A_i (H) - b^* A_i (TH) & = b^* A ( [T,Y] )  A_i (H), \label{LAC2} \\
A_i (HT)c - A_i (H)c T & = A_i (H) A([T,Y])c, \quad \quad \mbox{and} \label{LAC3}\\
A_i (GT)A_j (H) - A_i (G) A_j (TH) & = A_i (G) A ([T,Y])A_j (H). \label{LAC4} \end{align}
\end{thm}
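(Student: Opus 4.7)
The plan is to combine two facts. First, by Subsection \ref{ss:realTT}, the quantized transfer function $f \sim _Y (A,b,c)$ admits on a uniform column-ball about $Y$ a convergent Taylor--Taylor expansion with multi-linear coefficients $\hat{f}_\om (\vec{H}) = b^* A^\om (\vec{H}) c$ and $\hat{f}_\emptyset = b^* c$. Second, by \cite[Theorem 8.11]{KVV}, such a power series of multi-linear maps represents a uniformly analytic NC function on its uniform column-ball of convergence if and only if its coefficient sequence obeys the word-indexed Lost Abbey conditions (\ref{LACy1})--(\ref{LACy4}) at $Y$. Hence it suffices to prove that, specialized to coefficients of the form $b^* A^\om (\cdot) c$, the word-indexed LAC are equivalent to the four linearized conditions (\ref{LAC1})--(\ref{LAC4}).

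The forward implication, that the linearized LAC imply the word-indexed LAC, is obtained by direct substitution and factoring, using that $A^\om (H_1, \ldots, H_{|\om|}) = A_{i_1}(H_1) \cdots A_{i_{|\om|}} (H_{|\om|})$ is literally a product of bounded operators. Specifically, (\ref{LACy1}) is (\ref{LAC1}); (\ref{LACy2}) follows by multiplying (\ref{LAC2}) on the right by $A_{i_2}(H_2) \cdots A_{i_{|\om|}}(H_{|\om|}) c$; (\ref{LACy4}) follows by multiplying (\ref{LAC3}) on the left by $b^* A_{i_1}(H_1) \cdots A_{i_{|\om|-1}}(H_{|\om|-1})$; and (\ref{LACy3}) is obtained from (\ref{LAC4}) by multiplications on both sides. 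In each case one also invokes the defining identity $\sum_{j=1}^d A_j([T, Y_j]) = A([T, Y])$.

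The converse is where minimality becomes essential, and the strategy is to extract linearized operator identities from the word-indexed identities by sweeping through words of unbounded length. For (\ref{LAC2}), set $M := T b^* A_i(H) - b^* A_i(TH) - b^* A([T,Y]) A_i(H) \in \scr{B}(\cH, \C^n)$; applying (\ref{LACy2}) to words $\om = i\alpha$ (for $\alpha \in \F$ arbitrary) yields $M \cdot A^\alpha (\vec{G}) c = 0$ for every $\alpha, \vec{G}$, and controllability, $\scr{C}_{A,c} = \cH$, combined with boundedness of $M$, forces $M = 0$. Dually, for (\ref{LAC3}), set $N := A_i(HT)c - A_i(H) c T - A_i(H) A([T,Y]) c \in \scr{B}(\C^n, \cH)$; applying (\ref{LACy4}) to $\om = \alpha i$ gives $b^* A^\alpha (\vec{G}) \cdot N = 0$ for all $\alpha, \vec{G}$, and passing to adjoints via $(b^* A^\alpha(\vec{G}))^* = A^{\alpha; \dag}(\vec{G}^*) b$ together with observability, $\scr{O}_{A^\dag, b} = \cH$, then forces $N = 0$.

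The main obstacle is (\ref{LAC4}), which is a genuinely two-sided operator identity on $\cH$. For this I let $P := A_i(GT) A_j(H) - A_i(G) A_j(TH) - A_i(G) A([T,Y]) A_j(H) \in \scr{B}(\cH)$ and apply (\ref{LACy3}) to words of the form $\om = \alpha \, i \, j \, \beta$, placing $G$ and $H$ in positions $|\alpha|+1$ and $|\alpha|+2$; this yields $b^* A^\alpha(\vec{H}_\alpha) \cdot P \cdot A^\beta(\vec{H}_\beta) c = 0$ for all $\alpha, \beta, \vec{H}_\alpha, \vec{H}_\beta$. Because $P$ is bounded, controllability first lets the right-hand input $A^\beta(\vec{H}_\beta) c(v)$ run through a dense subspace of $\cH$, promoting the identity to $b^* A^\alpha(\vec{H}_\alpha) Px = 0$ for every $x \in \cH$, and then observability forces $Px = 0$ for every $x$. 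The technical delicacy is that neither density alone suffices and one must track that the dense approximation on the input side survives the subsequent orthogonality argument on the output side; once this two-sided step is in place, all four linearized LAC are established and the theorem follows.
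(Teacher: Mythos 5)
Your proof is correct, and for the direction \emph{linearized LAC at $Y$ $\Rightarrow$ $f$ is an NC function} it takes a genuinely different route from the paper. The paper establishes this implication by a self-contained but somewhat lengthy direct computation: starting from $f(X)S$, it repeatedly applies (\ref{LAC1})--(\ref{LAC4}) to shuffle $S$ through the resolvent pencil and arrives at $Sf(X')$ for $X' = S^{-1}XS$, thereby verifying joint similarity invariance by hand. You instead show that the four linearized conditions propagate to the full word-indexed Lost Abbey conditions (\ref{LACy1})--(\ref{LACy4}) for the coefficient sequence $(b^*A^\om(\cdot)c)_{\om\in\F}$, which is a short manipulation because $A^\om(H_1,\dots,H_\ell) = A_{i_1}(H_1)\cdots A_{i_\ell}(H_\ell)$ is a genuine operator product, and then invoke \cite[Theorem 8.11]{KVV} (quoted at the start of Section \ref{sec:LAC}) that a convergent power series of multi-linear maps obeying LAC($Y$) represents a uniformly analytic NC function. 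Your route is shorter and more conceptual, buying brevity at the price of re-appealing to the general KVV machinery; the paper's route is more elementary and self-contained. For the other direction ($f$ NC $\Rightarrow$ linearized LAC) your argument coincides with the paper's: by uniqueness of Taylor--Taylor coefficients the word-indexed LAC hold for $(b^*A^\om(\cdot)c)_\om$, and one then strips trailing factors by controllability (for (\ref{LAC2})), leading factors by observability (for (\ref{LAC3})), and both by minimality (for (\ref{LAC4})). One small remark: the caution you flag in the (\ref{LAC4}) step is not needed. Since $P$ is bounded, $b^*A^\alpha(\vec{H}_\alpha)P$ annihilates the dense subspace $\scr{C}_{A,c}$ and is therefore identically zero; then for each fixed $x\in\cH$ the vector $Px$ is orthogonal to the dense subspace $\scr{O}_{A^\dag,b}$, forcing $Px=0$. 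The two density arguments are applied sequentially and do not interfere.
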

In the above theorem statement, we have used the short-form notation, 
$$ A ([T,Y]) := \sum _{j=1} ^d A_j ([T,Y_j]). $$  We will refer to the above four Equations (\ref{LAC1}--\ref{LAC4}) as the \emph{linearized Lost Abbey conditions} at $Y$, LLAC(Y). It follows from this theorem, that if $(A,b,c)_Y$ is minimal and obeys the linearized Lost-Abbey conditions at $Y$, then $f \sim _Y (A,b,c)$ is a uniformly analytic NC function in a uniform column-ball centred at $Y$ of radius at least $\| A \| _{\mr{CB; row}} ^{-1}$.

\begin{proof}[Proof of Theorem \ref{realLAC}]
As in Subsection \ref{ss:realTT}, assuming that $f \sim _Y (A,b,c)$ is a uniformly analytic NC function, then by uniqueness of Taylor--Taylor series, if 
$$ f(X) = \sum _{\om \in \F}  \hat{f} _\om \circ \left(X - I_m \otimes Y \right) ^{\odot ^\omega} $$ is the Taylor--Taylor series of $f$ at $Y$, then as an $|\om |-$linear map 
$$ \hat{f} _\om ( H_1, \cdots, H_{|\om |}) = \Delta ^{\om ^t} _{{H_1}, \cdots, H_{|\om |}} f(Y) = b^* A^\om (H_1, \cdots, H_{|\om |}  ) c, $$ where $\om = i_1 \cdots i_{|\om |}$. Hence, the sequence of $|\om|-$linear maps, $(b^* A^\om (\cdot) c)_{\om \in \F}$, must obey the Lost--Abbey conditions at $Y$, as stated in Equations (\ref{LACy1}--\ref{LACy4}). 

The first condition (\ref{LACy1}), $[T, \hat{f} _\emptyset] = \hat{f} _1 ([T,Y])$ becomes 
$$ [T,b^*c] = \sum _{j=1} ^d b^* A_j ([T,Y_j]) c = b^* A ([T,Y])c, $$ which is Equation (\ref{LAC1}). Given $G,T \in \C ^{n\times n}$, $\om \in \F$ and $H \in \C^{n\times n} \otimes \C ^{1 \times |\om |}$, the second condition (\ref{LACy2}), 
$$ T \hat{f} _{k\om} (G, H) - \hat{f} _{k\om} (TG,H) = \sum _{j=1} ^d \hat{f} _{jk\om} \left([T, Y_j], G, H \right), $$ then becomes 
$$ T b^* A_k (G) A^\om (H) c - b^* A_k (TG) A^\om (H) c = b^* A([T,Y]) A _k (G)  A^\om (H) c. $$ 
By controllability of $(A,b,c) _Y$, we conclude that 
$$ T b^* A_k (G)  - b^* A_k (TG)  = b^* A([T,Y]) A_k (G), $$ so that condition (\ref{LAC2}) holds. The final two conditions, (\ref{LAC3}) and (\ref{LAC4}) follow similiarly using observability and minimality of $(A,b,c)_Y$, respectively. 

Conversely, we claim that if $f\sim _Y (A,b,c)$, where $(A,b,c)_Y$ is minimal and obeys the linearized $\mr{LAC} (Y)$, Equations (\ref{LAC1}--\ref{LAC4}), then $f$ is a uniformly analytic NC function in a uniform column-ball centred at $Y$. Since $f$ is the quantized transfer function of a matrix-centre realization, it respects direct sums and the grading, so that it suffices to prove that $f$ respects joint similarities. (This will imply that $f$ is an NC function. Since $f \sim _Y (A,b,c)$, it is automatically uniformly bounded in $r \cdot \B ^d _{\N n } (Y)$, $r= \| A \| _{\mr{CB;row}} ^{-1}$, hence uniformly analytic in this uniformly open NC set.) Hence, assume that $X \in \scr{D} ^Y _m (A) \subseteq \C ^{(mn \times mn) \cdot d}$, $S \in \mr{GL}_{mn}$ is invertible and $X' := S^{-1} X S \in \scr{D} ^Y _m (A)$. First, we observe that the Lost--Abbey conditions for the realization $(A,b,c)_Y$ can be extended to $T \in \C ^{mn \times mn}$ in a natural way, by ampliating the realization. For example, given such a $T$, Equation (\ref{LAC1}) becomes
$$ [T, (I_m \otimes b^*c)] = I_m \otimes b^* (\mr{id}_m \otimes A) ( [T, I_m \otimes Y]) I_m \otimes c.$$ In the following argument we will assume, for simplicity of notation, that $X \in \scr{D} ^Y _1 (A) \subseteq \cdn$ and $S \in \mr{GL} _n$, the argument is easily extended to the general case. 

We will prove that $f(X)S = S f(X')$. First,
$$ f(X) S = b^*c S + b^* L_A (X-Y) ^{-1} A(X-Y)cS. $$ Applying condition (\ref{LAC1}) to $b^*c S$ yields
$$ b^*c S = S b^*c - b^* A ([S,Y])c. $$ Now consider $A(X-Y) cS$, 
$$ A (X-Y) c S = A ((X-Y)S)c - A(X-Y) A([S,Y])c, $$ by condition (\ref{LAC3}). Hence,
\ba f(X) S & = & Sb^*c - b^* A ([S,Y])c + b^* L_A (X-Y) ^{-1} A((X-Y)S) c - b^* L_A (X-Y) ^{-1} A(X-Y) A([S,Y])c \\ 
& = & Sb^*c - b^* A([S,Y])c + b^* L_A (X-Y) ^{-1} A((X-Y)S) c - b^* L_A (X-Y) ^{-1} A ([S,Y])c + b^* A ([S,Y]) c \\
& = & Sb^*c + b^* L_A (X-Y) ^{-1} A((X-Y)S) c - b^* L_A (X-Y) ^{-1} A ([S,Y])c \\
& = & Sb^*c + b^* L_A (X-Y) ^{-1} \left( A((X-Y)S) - A ([S,Y]) \right)c. \ea 
Observe that 
$$ A ((X-Y)S) - A([S,Y]) = A (SX' - SY), $$ by linearity of $A$, so that the previous equation becomes
\be f(X) S = S b^*c + b^* L_A (X-Y) ^{-1} A (S(X' - Y))c. \label{expr} \ee 
Now consider the expression
\be A ((X-Y)S) A (X' - Y) c - A (X-Y) A ([S,Y]) A(X'-Y)c. \label{expr1} \ee Applying condition (\ref{LAC4}) to the first term of this expression yields
$$  A ((X-Y)S) A (X' - Y) c  =  A (X-Y) A(S (X'-Y))c + A (X-Y) A([S,Y])A(X'-Y)c. $$
Hence Equation (\ref{expr1}) becomes
\ba (\mbox{\ref{expr1}}) & = & A (X-Y) A(S (X'-Y))c \\
& = & - L_A (X-Y)A(S(X'-Y))c + A (S (X'-Y))c. \ea 
Rearranging this last equation then yields that 
\ba A (S (X'-Y))c &=& L_A (X-Y) A(S(X'-Y)) c + A ((X-Y)S)A(X'-Y)c - A(X-Y) A([S,Y]) A(X'-Y)c \\
& = & L_A (X-Y) A(S(X'-Y))c + A((X-Y)S)A(X'-Y)c \\ & &  +L_A (X-Y) A([S,Y])A(X'-Y)c - A([S,Y])A(X'-Y)c \\
& = & L_A (X-Y) \left( A (S(X'-Y)) + A([S,Y])A(X'-Y) \right)c + \left( A((X-Y)S) - A ([S,Y])\right)A(X'-Y)c. \ea 
We now substitute this identity for $A(S(X'-Y))c$ into Equation (\ref{expr}) to obtain
\ba f(X)S - Sb^*c & = & b^* L_A (X-Y) ^{-1} A (S (X'-Y))c \\
& = & b^* A (S (X'-Y))c + b^* A([S,Y]) A(X'-Y)c \\ & & + b^* L_A (X-Y) ^{-1} \underbrace{\left( A ((X-Y)S)-A([S,Y]) \right)}_{=A(S(X'-Y))} A(X'-Y)c \\
& = & b^* A (S (X'-Y))c + b^* A([S,Y]) A(X'-Y)c + b^* L_A (X-Y) ^{-1} A(S(X'-Y)) A(X'-Y)c. \ea 
Applying condition (\ref{LAC2}) to the first term in this final formula yields
\begin{eqnarray} f(X) S - S b^*c & = & S b^* A (X'-Y)c - b^* A ([S,Y]) A(X'-Y)c + b^* A([S,Y]) A (X'-Y)c \\ & & + b^* L_A (X-Y) ^{-1} A (S (X'-Y)) A(X'-Y)c \nonumber \\ 
& = & S b^* A (X'-Y) c + b^* L_A (X-Y) ^{-1} A (S (X'-Y)) A(X'-Y)c. \label{expr2} \end{eqnarray} 

We now apply condition (\ref{LAC4}) to obtain the expression
$$ A(X-Y) A(S(X'-Y))  =  A ((X-Y)S) A(X'-Y) - A(X-Y) A ([S,Y]) A (X'-Y). $$ Hence,
\ba A (S (X'-Y)) & = & L_A (X-Y) A (S (X'-Y))+ A (X-Y) A(S(X'-Y)) \\
& = & L_A (X-Y) A (S (X'-Y)) + A ((X-Y)S)A(X'-Y) - A(X-Y) A([S,Y]) A (X'-Y) \\
& = & A ((X-Y)S)A(X'-Y) +L_A (X-Y) A (S (X'-Y)) \\ & & 
+ L_A (X-Y) A ([S,Y]) A(X'-Y) - A([S,Y])A(X'-Y) \\
& = & \left( A ((X-Y)S) - A ([S,Y]) \right) A(X'-Y) + L_A (X-Y) \left( A ([S,Y]) A(X'-Y) + A (S (X'-Y)) \right) \\
& = & A (S (X'-Y)) A(X'-Y) + L_A (X-Y)  \left( A ([S,Y]) A(X'-Y) + A (S (X'-Y)) \right). \ea 
Hence,
\begin{eqnarray} b^* L_A (X-Y) ^{-1} A (S(X'-Y)) & = & b^* L_A (X-Y) ^{-1}A (S (X'-Y)) A(X'-Y) \nonumber \\ 
& & + b^* \left( A ([S,Y]) A(X'-Y) + A (S (X'-Y)) \right). \label{expr5} \end{eqnarray} 
Applying condition (\ref{LAC2}) to the final term gives
$$ b^* A (S (X'-Y)) = S b^* A (X'-Y) - b^* A([S,Y])A(X'-Y). $$
Substituting this into Equation (\ref{expr5}) then gives 
$$ b^* L_A (X-Y) ^{-1} A (S (X'-Y)) = b^* L_A (X-Y) ^{-1} A (S (X'-Y)) A (X'-Y) + S b^* A (X'-Y). $$
Rearranging this expression yields
$$ b^* L_A (X-Y) ^{-1} A (S (X'-Y)) \underbrace{(I - A(X'-Y))}_{=L_A (X'-Y)}  = S b^* A (X'-Y), $$ or equivalently,
\be b^* L_A (X-Y) ^{-1} A (S (X'-Y)) = S b^* A(X'-Y) L_A (X'-Y) ^{-1} = Sb^* L_A (X'-Y) ^{-1} - Sb^*. \label{expr6}\ee
Finally, substituting the previous expression (\ref{expr6}) into expression (\ref{expr}) gives
\ba f(X) S & = & S b^*c + b^* L_A (X-Y) ^{-1} A (S (X'-Y))c  \\
& = & S b^*c + Sb^* L_A (X'-Y) ^{-1}c - Sb^*c \\
& = & S b^* L_A (X'-Y) ^{-1} c = S f(X'). \ea 
\end{proof}

\begin{remark}
In \cite[Theorem 3.5]{PV2}, Porat and Vinnikov further show that if a minimal and \emph{finite--dimensional} matrix-centre realization at $Y \in \cdn$ obeys the linearized Lost-Abbey conditions at $Y$, then the invertibility domain, $\scr{D} ^Y (A)$, is joint similarity invariant. (Recall $\scr{D} ^Y (A)$ is always a uniformly open NC set, regardless of whether the realization is minimal or obeys LAC(Y).) Their proof, however, seems to rely on the assumption that their realizations take values in a finite--dimensional space, as they make use of so-called finite-rank \emph{controllability} and \emph{observability operators} that are right and left invertible, respectively, under the assumption that the realization is minimal. So far we have not been able to extend this approach. While we can formally define these controllability and observability operators in the infinite dimensional settings, it is not immediately clear to us why these operators should even be closeable linear maps. 
\end{remark}

\section{Fornasini--Marchesini matrix-centre realizations} \label{sec:FM}

It will be sometimes convenient to consider a second type of realization, \emph{Fornasini--Marchesini} (FM) \emph{realizations}. An FM realization at a matrix-point $Y \in \cdn$, is a quadruple, $(A,B,C,D)_Y$, where, as before, $A = (A_1, \cdots, A_d) : \cdn \rightarrow \scr{B} (\cH)$ is a row $d-$tuple of linear maps, $B = \bsm B_1 \\ \vdots \\ B_d \esm \in \scr{B} ( \C ^{n\times n}, \scr{B} (\C ^n , \cH) ) \otimes \C ^d$, $C \in \scr{B} (\cH, \C ^n )$, and $D \in \C ^{n \times n}$. That is, each $B_j : \C ^{n\times n} \rightarrow \scr{B} (\C ^n , \cH)$ is a bounded linear map. The linear pencil of $A$, $L_A$, and its invertibility domain (at $Y$), $\scr{D} ^Y (A) \subseteq \C ^{(\N n \times \N n)\cdot d}$ are defined as before, and $(A,B,C,D)_Y \sim h$ defines the \emph{quantized transfer function} on $\scr{D} ^Y (A)$ by the formula:
$$ f (X) = I_m \otimes D + I_m \otimes C L_A (X-I_m \otimes Y) ^{-1} \underbrace{\sum _{j=1} ^d B_j (X_j - I_m \otimes Y_j)}_{=: B (X - I_m \otimes Y)}; \quad \quad X \in \scr{D} ^Y _m (A) \subseteq \C ^{(mn \times mn) \cdot d}. $$
If $(A,B,C,D)_Y$ is a matrix-centre realization at $Y \in \cdn$, we define, as before, the \emph{controllable subspace}, 
$$ \scr{C} _{A,B} := \bigvee _{\substack{\om \in \F \\ 1 \leq j \leq d}} \nbran A^\om B_j, $$ and the \emph{observable subspace}, 
$$ \scr{O} _{A^\dag, C^*} := \bigvee _{\om \in \F} \nbran A^{\om ; \dag} C^*. $$ Again, as before, we say that $(A,B,C,D)_Y$ is minimal if it is both controllable and observable. (Both Theorems \ref{uniqueness} and \ref{Kalman} as well as all of the results of previous sections are readily adapted to FM realizations.)

As described in \cite[Section 3]{AMS-opreal} (for the case of realizations at $0$) one can construct an FM realization from a descriptor one and vice versa. Namely, let $(A,b,c)_Y$ be a descriptor realization at $Y \in \cdn$. Then define the $A-$invariant subspace 
$$ \cH' := \bigvee_{\om \neq \emptyset} \nbran A^\om c, $$ with orthogonal projection $P'$, $$ A' _j ( G ) := A _j (G) | _{\cH '}, \quad  B_j (G) := A_j (G) c,  \quad C := b^*P', \quad \mbox{and} \quad D := b^*c \in \C ^{n\times n}, $$ where $G \in \C ^{n\times n}$. It is easy to check that minimality of $(A,b,c)_Y$ implies minimality of $(A',B,C,D)_Y$ and both of these realizations define the same quantized transfer function in a uniform row-ball of positive radius centred at $Y$. 

Conversely, if $(A,B,C,D)_Y$ is an FM realization about $Y \in \cdn$, let $\hat{\cH} := \cH \oplus \C ^n$ and define
$$ \hat{A}_j (G) := \bpm A_j (G) & B_j (G) \\ 0 & 0 \epm, \quad b := C^* \oplus D^*, \quad \mbox{and} \quad c =0 \oplus I_n. $$ Then $(\hat{A},b,c)_Y$ is a descriptor realization centred at $Y$, which defines the same quantized transfer function in a uniformly open neighbourhood of $Y$. If $(A,B,C,D)_Y$ is controllable, then, 
\ba \scr{C} _{\hat{A},c} & = & \bigvee _{\om \in \F } \hat{A} ^\om (\vec{G}) c (v) \\
& = & \bigvee _{\substack{G \in \C ^{n \times n}; v \in \C ^n \\ 1 \leq j \leq d} } B_j (G) (v) + \bigvee _{\substack{\om = \om ' j \in \F \\ \om ' \in \F, \ 1 \leq j \leq d}} A^{\om'} (\vec{G}) B_j (G) (v) \\
& = & \scr{C} _{A, B}, \ea so that $(\hat{A},b,c)$ is also controllable.  If $(A,B,C,D)_Y$ is observable, then $(\hat{A},b,c)_Y$ need not be observable, but its observable subspace has codimension at most $n$.

\subsection{The realization algorithm} \label{ss:realalg}

Also as in \cite[Theorem 2.4]{PV1} and \cite[Subsection 3.4]{AMS-opreal}, if $f \sim _Y (A,B,C,D)$ and $g \sim _Y (A', B', C', D')$, then $f+g$, $f\cdot g$ and, assuming $f(Y) =D$ is invertible, $f^{-1}$, all have matrix-centre realizations about $Y$ given by a natural extension of the Fornasini--Marchesini algorithm for realizations at $0$. Namely, a matrix-centre realization for $f+g$ is given by $(A ^+ , B ^+ , C ^+, D ^+)$ where
\be A _j ^+ := A _j \oplus A' _j, \quad B _j ^+ := \bpm B _j \\ B ' _j \epm, \quad C ^+ := (C , C ' ), \quad \mbox{and} \quad D ^+ := D + D'. \label{matFMsum} \ee A matrix-centre realization for $f\cdot g$ is given by $(A^\times , B^\times, C ^\times, D ^\times )$, where 
\be A ^\times _j := \bpm A _j & B _j (\cdot ) C '  \\ & A _j ' \epm, \quad B ^\times := \bpm B _j (\cdot ) D ' \\ B _j ' \epm, \quad C ^\times := ( C , D C ' ), \quad \mbox{and} \quad D ^\times := D D '. \label{matFMmult} \ee Finally, a matrix-centre realization for $f^{-1}$, assuming that $D = f (Y)$ is invertible, is given by \\ $(A ^{(-1)}, B ^{(-1)}, C ^{(-1)}, D ^{(-1)} )$, where
\be A ^{(-1)} _j := A _j - B _j (\cdot ) D ^{-1} C, \quad B _j ^{(-1)} := -B _j (\cdot ) D ^{-1}, \quad C ^{(-1)} := D ^{-1} C, \quad \mbox{and} \quad D ^{(-1)} := D ^{-1}. \label{matFMinv} \ee Verification of these formulas is a straightforward computation, and we refer the reader to \cite[Theorem 2.4]{PV1} for their proofs (in the NC rational/ finite--dimensional setting). 

\begin{remark}
An analogous algorithm exists for descriptor realizations centred at $Y \in \cdn$. In our opinion the formulas for the FM algorithm are simpler and easier to work with in many calculations. Another advantage of the FM algorithm over the descriptor realization algorithm is that if $(A,B,C,D)_Y$ minimal (and $D \in \C ^{n\times n}$ is invertible) then the realization $(A^{(-1)}, B^{(-1)}, C^{(-1)}, D ^{(-1)})_Y$ is also minimal. (This is not the case for the corresponding descriptor algorithm formula for the realization of the inverse of the quantized transfer function.)
\end{remark}

\section{Every uniformly analytic NC function has a matrix-centre realization} \label{sec:matrixFock}

In \cite[Lemma 3.2]{AMS-opreal} it was shown that an NC function, $h$, is uniformly analytic in a uniformly open neighborhood of $0$ if and only if it has a realization, $h \sim _0 (A,b,c)\in \scr{B}(\mathcal{H})^{1\times d} \times \mathcal{H}\times\mathcal{H}$. The proof of the existence of such a realization, given such a uniformly analytic $h$, was accomplished with the aid of the full Fock space over $\C ^d$. Here, given any complex Hilbert space, $V$, the full Fock space over $V$ is the Hilbert space
defined by
\begin{align*}
    \scr{F}(V) := \bigoplus _{\ell =0} ^\infty V ^{\otimes ^\ell} = \C\oplus V \oplus (V\otimes V) \oplus (V\otimes V\otimes V) \oplus \cdots.
\end{align*}
One can identify the full Fock space over $V=\C ^d$ with the \emph{free Hardy space} of free formal power series in $d$ NC variables with square--summable coefficients:
\begin{align*}
    \hardy := \left\{ \left.  h(\fz)=\sum_{\omega\in\F} \hat{h}_\omega \fz^\omega \right| \ \sum_{\omega\in\F}|\hat{h}_\omega|^2<+\infty \right\}.
\end{align*}
Any $h\in \hardy$ has radius of convergence at least $1$, so that $\hardy$ can be viewed as a space of uniformly analytic NC functions on the unit row ball $\mathbb{B}^{(\N\times \N)\cdot d}_\text{row}$ or the unit column ball $\B ^{(\N \times \N) \cdot d}$. Equipping $\hardy$ with the $\ell^2$-norm of the coefficients turns it into a Hilbert space, which generalizes the classical Hardy space $H^2$. Moreover, $\hardy$ is an NC reproducing kernel Hilbert space (NC-RKHS); see \cite{NCRKHS} for background on NC reproducing kernel Hilbert spaces. Denote by $\mathbb{H}_d^\infty$ the unital Banach algebra of uniformly bounded, and hence uniformly analytic NC functions in the unit row-ball $\mathbb{B}^{(\N\times \N)\cdot d}_\text{row}$ equipped with the supremum norm. We say that a uniformly analytic NC function, $f$, in the NC unit row-ball, is a \emph{left multiplier} of $\hardy$, if $f \cdot h \in \hardy$ for every $h \in \hardy$. It is clear that the set of all left multipliers is a unital algebra, and, by the closed graph theorem, any left multiplier, $f \in \mr{Mult} ^L  ( \hardy )$, defines a bounded \emph{left multiplication operator} on the free hardy space via $h \mapsto fh$, $h \in \hardy$. We will later need the following fact, see \cite[Theorem 3.1]{Pop-freeholo} or \cite[Theorem 3.1]{SSS}.

\begin{prop}\label{prop:MultiplierHinfty}
    $h\in \mathbb{H}_d^\infty$ if and only if $h$ belongs to the left multiplier algebra of $\mathbb{H}_d^2$. 
\end{prop}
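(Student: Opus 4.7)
The plan is to establish this equivalence by identifying $\hardy$ with the full Fock space $\scr{F}(\C^d)$ and invoking Popescu's noncommutative von Neumann inequality. Under the unitary identification sending the monomial $\fz^\om$ to the ON basis vector $e_\om := e_{i_1} \otimes \cdots \otimes e_{i_{|\om|}}$ (for $\om = i_1 \cdots i_{|\om|}$), the left creation operators $L_j \xi := e_j \otimes \xi$ correspond to left multiplication by the coordinate functions $\fz_j$. The tuple $L := (L_1, \cdots, L_d)$ is a row isometry, so $\|L\|_{\mr{row}} = 1$, and the free polynomials $\fp$ are dense in $\hardy$.

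To prove $h \in \mathbb{H}_d^\infty \Rightarrow h \in \mr{Mult}^L(\hardy)$, I would expand $h(\fz) = \sum_\om \hat{h}_\om \fz^\om$. For each $0 < r < 1$, the scaled tuple $rL$ is a strict row contraction, so by Popescu's noncommutative von Neumann inequality the series $h(rL) := \sum_\om \hat{h}_\om r^{|\om|} L^\om$ converges in operator norm to a bounded operator satisfying $\|h(rL)\| \leq \|h\|_\infty$. A direct computation on basis vectors $e_\om$ shows that $h(rL)$ converges in the strong operator topology as $r \to 1^-$ to a bounded operator $M$ with $\|M\| \leq \|h\|_\infty$, and that $Mp = hp$ for every $p \in \fp$. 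In particular $h = M \cdot 1 \in \hardy$, and $M$ extends uniquely to the bounded left-multiplication operator $M_h$ on all of $\hardy$.

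For the converse, assume $h \in \mr{Mult}^L(\hardy)$, so that $M_h \in \scr{B}(\hardy)$. The free Hardy space is an NC reproducing kernel Hilbert space with NC Szeg\"o kernel, and evaluation at any strict row contraction $X \in \scr{B}(\cK)^{1 \times d}$ is realized by an NC kernel vector: the adjoint of the ampliated multiplier operator satisfies $(M_h \otimes I_\cK)^* K_X v = K_X h(X)^* v$ for $v \in \cK$, and upon taking norms this yields $\|h(X)\|_{\mr{op}} \leq \|M_h\|$. Since this bound is uniform in the strict row contraction $X$, $h \in \mathbb{H}_d^\infty$ with $\|h\|_\infty \leq \|M_h\|$.

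The main obstacle will be the SOT convergence in the first implication: the partial sums $\sum_{|\om| \leq N} \hat{h}_\om r^{|\om|} L^\om$ need not converge in operator norm when $r = 1$ (analogous to the failure of norm convergence of Fourier series of $H^\infty$ functions), so one must combine the uniform norm bound from Popescu's inequality with pointwise convergence on the dense subspace $\fp$ to extract an SOT limit, and then verify by a direct monomial calculation that this limit acts as left multiplication by $h$.
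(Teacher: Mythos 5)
The paper does not actually prove this proposition; it is quoted as a known result with citations to Popescu \cite[Theorem 3.1]{Pop-freeholo} and Salomon--Shalit--Shamovich \cite[Theorem 3.1]{SSS}, so there is no in-paper proof to compare against. Judged on its own terms, your sketch of the forward implication is sound: the compressions $P_N(rL)|_{\mr{ran}\,P_N}$ to words of length $\le N$ are strictly contractive nilpotent matrix tuples, so $\|h(rL)\| = \sup_N\|h(P_N rL|_{\mr{ran}\,P_N})\| \le \|h\|_\infty$, the uniform bound together with the identity $\|h(rL)1\|^2 = \sum_\om |\hat h_\om|^2 r^{2|\om|}$ forces $h \in \hardy$, and the SOT limit then acts as $M_h$ on the dense set of free polynomials. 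You correctly flag the SOT convergence as the point requiring care.

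The converse direction, however, has a genuine gap. From the intertwining $(M_h \otimes I)^* K_X v = K_X h(X)^* v$, ``taking norms'' yields only
\[
\|K_X h(X)^* v\| \;\le\; \|M_h\|\,\|K_X v\|,
\]
and since the NC kernel map $K_X : \C^n \to \hardy \otimes \C^n$, $v \mapsto \sum_\om e_\om \otimes X^{*\om}v$, is bounded below but \emph{not isometric} (indeed $\|K_X v\|^2 = \sum_\om \|X^{*\om}v\|^2 \ge \|v\|^2$ with strict inequality whenever $X \ne 0$), this does \emph{not} give $\|h(X)\| \le \|M_h\|$. What it gives is that $h(X)^*$ is \emph{similar}, via the invertible map $K_X$ onto its (finite-dimensional, hence closed) range, to the compression of $(M_h\otimes I)^*$ to that range; similarity does not preserve operator norm, so the condition number of $K_X$ enters. (Iterating with $M_h^N$ and taking $N$-th roots only yields a spectral-radius bound, which is weaker.) The standard ways to close this gap are: (i) Popescu's dilation theorem -- a strict row contraction $X$ on $\cK$ is pure, so $X_j = P_\cK(L_j \otimes I_\cE)|_\cK$ for some coinvariant embedding $\cK \hookrightarrow \scr F(\C^d)\otimes\cE$, and since $M_h$ lies in the WOT-closed algebra generated by the $L_j$, one gets $h(X) = P_\cK(M_h \otimes I_\cE)|_\cK$ and hence $\|h(X)\| \le \|M_h\|$ directly; or (ii) the complete-positivity characterization of contractive multipliers in the NC-RKHS framework of \cite{NCRKHS}, applied to the NC Szeg\H{o} kernel. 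Either route is more than a norm estimate on the intertwining relation, so the converse as written does not quite go through.
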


Note that since $1 \in \hardy$, it follows that $\mult \subseteq \hardy$.
In the remainder of this section we will prove a similar result for NC functions which are uniformly analytic in a uniformly open neighborhood of some matrix point $Y$. Namely, we will prove that an NC function is uniformly analytic in a uniformly open neighbourhood of a matrix point $Y \in \cdn$ if and only if it is given by a matrix-centre realization at $Y$.

Our approach is similar to the one mentioned above, as we will make use of a matrix-valued version of the full Fock space and define a ``free Hardy space centred at $Y$", $\hardy (Y)$, consisting of uniformly analytic NC functions in the unit uniform column-ball centred at $Y$, $\B ^d _{\N n} (Y)$, which have ``square--summable Taylor--Taylor series at $Y$". Throughout this section let $Y\in \cdn$ be a fixed matrix point.

\subsection{A matricial Fock space} \label{sec:matrixFockspace}

\begin{defn}\label{defn:matrixFockspace}
View $\C^{n\times n}$ as a Hilbert space by equipping it with the Hilbert--Schmidt inner product and define
\ba
    \scr{F}_n(\C ^d) & := & \C ^{n\times n} \otimes \scr{F} \left( \C ^d \otimes \C ^{n\times n} \right) \\
    & = & \C^{n\times n}\oplus (\C^{n\times n}\otimes \C ^d \otimes \C^{n\times n})\oplus(\C^{n\times n}\otimes \C ^d \otimes \C^{n\times n}\otimes \C ^d \otimes\C^{n\times n})\oplus \cdots . \ea
\end{defn}

If $\{ e_1,\cdots,e_d \}$ is the standard orthonormal basis of $\C ^d$, an orthonormal basis of $\C^{n\times n}\otimes (\C ^d \otimes \C ^{n\times n} ) ^{\otimes ^\ell}$, where $\ell \in \N$, is then given by
\begin{align*}
    \{ \left. E_{i_0,j_0}\otimes e_{k_1}\otimes E_{i_1,j_1}\otimes \cdots\otimes e_{k_\ell}\otimes E_{i_\ell, j_\ell} \right| \ 1\leq i_0,\cdots,i_\ell,j_0,\cdots,j_\ell\leq n,\; 1\leq k_1,\cdots,k_\ell\leq d\},
\end{align*}
where the matrices $E_{i,j}=e_i e_j $ are the standard matrix units of $\C ^{n\times n}$. For convenience and brevity, we introduce the short-form notation: For $\alpha = a_0 \cdots a_{\ell} \in \mathbb{F} _n ^+$, $\beta = b_0 \cdots b_{\ell} \in \mathbb{F} _n ^+$ and $\om = w_1 \cdots w_\ell\in \F$,
$$ E_{\alpha, \beta} \star e_\om := E_{a_0, b_0} \otimes e_{w_1} \otimes E_{a _1, b_1} \otimes \cdots \otimes e_{w_\ell} \otimes E_{a_\ell, b_\ell}. $$ Hence, an orthornormal basis of $\scr{F} _n (\C ^d)$ is then 
$$\{ E_{\alpha, \beta} \star e_\om | \alpha, \beta \in \Fn, \ \om \in \F, \ |\alpha | = |\beta | = |\om | +1 \}.$$ In particular, the orthonormal basis of the ``vacuum space", $\C ^{n \times n} \oplus \{ 0 \} \subseteq \scr{F} _{n} (\C ^d)$ is then written
$$ \{ E_{i,j} \star e_\emptyset | \ 1\leq i,j \leq n \}. $$

For $1\leq i,j\leq n$ and $1\leq k\leq d$, define the left creation operators on $\scr{F} _n (\C ^d)$ by
\begin{align*}
    L_{i,j;k}:\scr{F}_n(\C ^d) \rightarrow \scr{F}_n(\C ^d)\\
    E_{\alpha, \beta} \star e_\om \mapsto E_{i\alpha, j\beta} \star e_{k\om}.
\end{align*}
Note that $L_{i,j;k}$ defines a $dn^2-$tuple of linear isometries on $\scr{F}_n(\C ^d)$ with pairwise orthogonal ranges. Given any $\alpha  = a_0 \cdots a_\ell$, $\beta = b_0 \cdots b_\ell$ and $\om = w_1 \cdots w_\ell$, $\ell \in \N$, so that $\alpha, \beta \in \Fn$, $\om \in \F$ and $|\alpha | = |\beta |  =|\om | +1 = \ell +1$, consider $\alpha ' = a_0 \cdots a_{\ell -1}$ and $\beta ' = b_0 \cdots b_{\ell -1}$. Then, if we define
$$ L_{\alpha ' , \beta '; \om} :=  L_{a_0, b_0; w_1} \cdots L_{a_{\ell-1}, b_{\ell-1}; w_\ell}, $$ notice that 
$$ E_{\alpha, \beta} \star e_\om = L_{\alpha' , \beta' ; \om} E_{a_\ell, b_\ell} \star e_\emptyset.$$

Analogously, we define the right creation operators on $\scr{F} _n (\C ^d)$: For $1\leq a,b\leq n$, $1 \leq w \leq d$, $\alpha, \beta \in \mathbb{F} _n ^+$ and $\om \in \F$ obeying $|\alpha | = | \beta |  = | \om | +1$,
$$ R_{a,b;w} E_{\alpha, \beta} \star e_\om := E _{\alpha a, \beta b} \star e_{\om w}, $$
which also yields a family of linear isometries on $\scr{F}_n(\C ^d)$ with pairwise orthogonal ranges. Similarly to before, given $\alpha = a_0 \cdots a_\ell$, $\beta = b_0 \cdots b_\ell$ $\alpha, \beta \in \Fn$, $\om = w_1 \cdots w_\ell \in \F$, let $\alpha '' := a_1 \cdots a_{\ell}$, $\beta '' = b_1 \cdots b_{\ell}$. Then, we have that 
\be E_{\alpha, \beta} \star e_{\om} = R_{\alpha ^{'' ; \mrt}, \beta ^{'' ; \mrt}; \om ^\mrt} E_{a_0, b_0} \star e_\emptyset. \label{RONB} \ee  
It follows that if we define the ``flip unitary", $U_\mrt$, on $\scr{F} _n (\C ^d)$ by 
$$ E_{\alpha,\beta} \star e_\om  \, \stackrel{U_\mrt}{\mapsto} \, E_{\alpha ^\mrt, \beta ^\mrt} \star e_{\om ^\mrt}, $$ then $U_\mrt$ is a self-adjoint unitary (hence a unitary involution) obeying
\ba U_\mrt L_{a,b;w} E_{\alpha, \beta } \star e_\om & = & E_{\alpha ^\mrt a, \beta ^\mrt b} \star e_{\om ^\mrt w} \\
& = & R_{a,b;w} U_\mrt E_{\alpha, \beta} \star e_\om, \ea so that $U_\mrt L_{a,b;w} U_\mrt = R_{a,b;w}$ and the left and right creation operators are unitarily equivalent via $U_\mrt$.

Given $\zeta, \xi \in \Fn$ and $\Xi \in \F$ with $|\zeta| =|\xi | = |\Xi | =\ell$, $\zeta = z_1 \cdots z_\ell$, $\xi = y_1 \cdots y_\ell$ and $\Xi = x_1 \cdots x_\ell$, we define 
$$ R_{\zeta, \xi ; \Xi} := R_{z_1, y_1; x_1 } \cdots R_{z_\ell, y_\ell; x_\ell}. $$ 
Let $h \in \scr{F} _n (\C ^d)$ with 
$$ h = \sum _{\substack{\alpha, \beta \in \Fn, \ \om \in \F \\ |\alpha | = | \beta | = | \om | +1}} \hat{h} _{\alpha, \beta ; \om} \, E_{\alpha,\beta} \star e_\om, $$ where $\alpha = a_0 a_1 \cdots a_\ell$, $\beta = b_0 b_1 \cdots b_\ell$ and $\om = w_1 \cdots w_\ell$. By taking inner products against elements of the orthonormal basis, one obtains
\ba (R_{\zeta ^\mrt , \xi ^\mrt ; \Xi ^\mrt}) ^* h & = & R_{z_1, y_1 ; x_1} ^* \cdots R_{z_\ell, y_\ell; x_\ell} ^* h =: (R^*)_{\zeta, \xi; \Xi} h \\
& = & \sum _{\substack{\alpha, \beta \in \Fn, \ \om \in \F \\ |\alpha | = | \beta | = | \om | +1}} \hat{h} _{\alpha \zeta, \beta \xi ; \om \Xi} \, E_{\alpha,\beta} \star e_\om. \ea
Alternatively,
\ba (R_{\zeta ^\mrt , \xi ^\mrt ; \Xi ^\mrt} )^* h & = & \sum _{\substack{\alpha, \beta \in \Fn, \ \om \in \F \\ |\alpha | = | \beta | = | \om | +1}} \hat{h} _{\alpha , \beta ; \om } \, (R^*)_{\zeta  , \xi  ; \Xi }  E_{\alpha,\beta} \star e_\om \\
& = & \sum _{\substack{\alpha, \beta \in \Fn, \ \om \in \F \\ |\alpha | = | \beta | = | \om | \\ 1 \leq a,b \leq n}} \hat{h} _{a \alpha  , b\beta  ; \om } \, (R^*)_{\zeta  , \xi  ; \Xi }  R _{\alpha ^\mrt, \beta ^\mrt; \om ^\mrt} E_{a,b} \star e_\emptyset \\
& = & \sum  _{\substack{\alpha ', \beta' \in \Fn, \ \om' \in \F \\ |\alpha' | = | \beta' | = | \om' | +1}} \hat{h} _{\alpha' \zeta  , \beta ' \xi; \om ' \Xi} E_{\alpha ', \beta '} \star e_{\om '}, \ea where the final line follows as 
$$ (R^*)_{\zeta , \xi ; \Xi } R _{\alpha ^\mrt, \beta ^\mrt; \om ^\mrt} = \left\{ \begin{array}{cc} R _{\ga ^\mrt, \sigma ^\mrt; \rho ^\mrt} & \mbox{if} \ \alpha ^\mrt = \zeta ^\mrt \ga ^\mrt, \beta ^\mrt = \xi ^\mrt  \sigma ^\mrt, \ \mbox{and} \ \om ^\mrt = \Xi ^\mrt \rho ^\mrt, \ \ga, \sigma \in \Fn, \ \rho \in \F, |\ga | = |\sigma | = |\rho |, \\ 
 (R^*) _{\ga, \sigma ; \rho } & \mbox{if} \ \zeta =  \ga \alpha, \xi = \sigma \beta, \ \mbox{and} \ \Xi = \rho \om, \ \ga, \sigma \in \Fn, \ \rho \in \F, |\ga | = |\sigma | = |\rho |, \\
0 & \mbox{else,} \end{array} \right. $$ and $R^* _{a,b;w} E_{a',b'} \star e_{\emptyset} =0$ for any $1 \leq a,a',b,b' \leq n$ and any $1 \leq w \leq d$.

\subsection{Evaluations}

It will be convenient to view elements of $\scr{F}_n(\C ^d)$ as functions on the unit NC uniform row-ball centred at $0 ^{(n)} := (0_n, \cdots, 0_n) \in \cdn$, $\B ^d _{\N n} (0 ^{(n)})$.  Namely, for any $X \in \C ^{(mn\times mn)\cdot d}$, define the linear \emph{formal evaluation} map at $X$ on the standard basis elements $E_{\alpha , \beta} \star e_\om$ by 
$$ E_{\alpha, \beta} \star e_\om \ \mapsto \ I_m \otimes E_{a_0, b_0} X_{w_1} I_m \otimes E_{a_1, b_1} X_{w_2} \cdots I_m \otimes E_{a_{\ell -1}, b_{\ell -1}} X_{w_\ell} I_m \otimes E_{a_{\ell}, b_\ell} =: E_{\alpha, \beta} \star e_\om (X). $$ The proposition below will show that this notion of ``formal evaluation" extends to a continuous linear map on $\scr{F} _n (\C ^d)$ for any $X$ in an NC uniform row-ball centred at $0_n$ of radius $\frac{1}{\sqrt{n}}$.

\begin{prop}
Given $(X,y,v) \in \C ^{(mn \times mn) \cdot d} \times \C ^{mn} \times \C ^{mn}$, the linear functional $\ell _{X,y,v}$ defined by $E_{\alpha, \beta} \star e_\om \mapsto y^* E_{\alpha, \beta} \star e_\om (X) v$ extends to a bounded linear map on $\scr{F} _n (\C ^d)$ if $\| X \| _{\mr{col}}  < \frac{1}{\sqrt{n}}$. 
\end{prop}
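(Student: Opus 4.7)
The plan is to apply the Riesz representation theorem to $\scr{F}_n(\C^d)$: it suffices to show that the Fourier coefficients of $\ell_{X,y,v}$ against the orthonormal basis $\{E_{\alpha,\beta} \star e_\om\}$ are square-summable, i.e. that
\begin{equation*}
\Sigma(X,y,v) := \sum_{\substack{\alpha, \beta \in \Fn, \, \om \in \F \\ |\alpha| = |\beta| = |\om| + 1}} \left| y^* E_{\alpha, \beta} \star e_\om(X) v \right|^2 < +\infty.
\end{equation*}
Once $\Sigma$ is finite, Cauchy--Schwarz on the algebraic span of the basis followed by continuity will extend $\ell_{X,y,v}$ uniquely to a bounded linear functional on $\scr{F}_n(\C^d)$ of norm at most $\sqrt{\Sigma(X,y,v)}$.

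First I would recast the formal evaluation in ``sliced'' coordinates. Setting $y^{(a)} := (I_m \otimes e_a)^* y \in \C^m$, $v^{(b)} := (I_m \otimes e_b)^* v \in \C^m$, and $X_j^{(c,d)} := (I_m \otimes e_c)^* X_j (I_m \otimes e_d) \in \C^{m \times m}$, a routine manipulation using $I_m \otimes E_{a,b} = (I_m \otimes e_a)(I_m \otimes e_b)^*$ yields, for $\alpha = a_0 \cdots a_\ell$, $\beta = b_0 \cdots b_\ell$ and $\om = w_1 \cdots w_\ell$,
\begin{equation*}
y^* E_{\alpha, \beta} \star e_\om(X) v = (y^{(a_0)})^* X_{w_1}^{(b_0, a_1)} X_{w_2}^{(b_1, a_2)} \cdots X_{w_\ell}^{(b_{\ell-1}, a_\ell)} v^{(b_\ell)}.
\end{equation*}

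Next, I would bound the level-$\ell$ contribution $T_\ell := \sum_{|\alpha|=|\beta|=|\om|+1=\ell+1} |y^* E_{\alpha,\beta} \star e_\om(X) v|^2$ by summing the indices iteratively from left to right in the order $(a_0, b_0),\, (w_1, a_1),\, b_1,\, (w_2, a_2),\, b_2,\, \ldots,\, (w_\ell, a_\ell),\, b_\ell$, applying three elementary estimates: (i) for $z \in \C^{mn}$, $\sum_{a,b=1}^n |(y^{(a)} \otimes e_b)^* z|^2 \leq \|y\|^2 \|z\|^2$, because $\sum_{a,b}(y^{(a)} \otimes e_b)(y^{(a)} \otimes e_b)^* = YY^* \otimes I_n$ with $\|YY^*\| \leq \|Y\|_{\mr{HS}}^2 = \|y\|^2$, where $Y := [y^{(1)} | \cdots | y^{(n)}] \in \C^{m \times n}$; (ii) for $u \in \C^m$, $\sum_{w=1}^d \sum_{a=1}^n \|X_w(u \otimes e_a)\|^2 \leq n \|X\|_{\mr{col}}^2 \|u\|^2$, since $\sum_w \|X_w z\|^2 = \|X z\|^2 \leq \|X\|_{\mr{col}}^2 \|z\|^2$ and the sum over $a \in \{1, \ldots, n\}$ then contributes the factor $n$; and (iii) for any $z \in \C^{mn}$, $\sum_{b=1}^n \|(I_m \otimes e_b)^* z\|^2 = \|z\|^2$ by slice orthogonality. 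The initial round $(a_0,b_0)$ yields the prefactor $\|y\|^2$ via (i); each of the $\ell$ ``inner'' rounds $(w_k, a_k)$ contributes a factor $n \|X\|_{\mr{col}}^2$ via (ii); each intermediate sum over $b_k$ for $1 \leq k \leq \ell-1$ preserves the bound by (iii); and the final sum gives $\sum_{b_\ell} \|v^{(b_\ell)}\|^2 = \|v\|^2$. Assembling the factors, $T_\ell \leq \|y\|^2 \|v\|^2 (n \|X\|_{\mr{col}}^2)^\ell$.

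Summing the resulting geometric series in $\ell$ then gives $\Sigma(X,y,v) \leq \|y\|^2 \|v\|^2 /(1 - n \|X\|_{\mr{col}}^2)$, which is finite exactly when $\|X\|_{\mr{col}} < 1/\sqrt{n}$. The main subtlety I expect will be that step (ii) must genuinely exploit the column operator space structure: replacing it by the cruder estimate $\sum_j \|X_j\|^2 \leq d \|X\|_{\mr{col}}^2$ would only yield the strictly weaker radius $1/\sqrt{nd}$, so using the column-norm identity $\sum_w \|X_w z\|^2 = \|X z\|^2 \leq \|X\|_{\mr{col}}^2 \|z\|^2$ is essential for recovering the sharp threshold $1/\sqrt{n}$ appearing in the statement.
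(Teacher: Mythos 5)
Your proposal is correct and follows essentially the same route as the paper: reduce to square-summability of the coefficients $y^* E_{\alpha,\beta}\star e_\om(X)v$, bound the level-$\ell$ contribution by iteratively summing out the indices to obtain $(n\|X\|_{\mr{col}}^2)^\ell\|y\|^2\|v\|^2$, and sum the resulting geometric series. The only cosmetic difference is that you treat general $y,v\in\C^{mn}$ directly via your estimate (i), whereas the paper reduces to rank-one slices $y=x\otimes e_i$, $v=u\otimes e_j$ (a valid reduction by bilinearity) and works in the unsliced matrix notation; your observation that the column-norm identity $\sum_w\|X_w z\|^2=\|Xz\|^2$ is what delivers the sharp radius $1/\sqrt{n}$ matches the mechanism in the paper's computation.
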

\begin{proof}
Consider the formal series,
$$ K^{(n)} \{ X , y ,v \} := \sum _{\substack{\om \in \F; \ \alpha, \beta \in \mathbb{F} ^+ _n \\  |\alpha| = | \beta | = |\om| +1}} \ov{y^* E_{\alpha ,\beta} \star e_\om (X) v} \, E_{\alpha,\beta} \star e_\om. $$
To prove that $\ell _{X,y,v}$ is bounded, it suffices to show that $K^{(n)} \{ X , y , v \} \in \scr{F} _n (\C ^d)$, \emph{i.e.} that it has square--summable coefficients. 

It suffices to consider $y:= x \otimes e_i \in \C ^{m} \otimes \C ^n$ and $v:= u \otimes e_j \in \C ^m \otimes \C ^n$:
\ba & &  \sum _{|\om| =\ell} \sum _{|\alpha| = \ell +1 = |\beta|} \left| y^* E_{\alpha ,\beta} \star e_\om (X) v \right| ^2   \\
& = & \sum _{1 \leq k_1, \cdots, k_\ell \leq d} 
\sum _{\substack{1\leq i_0, \cdots, i_\ell \leq n \\ 1 \leq j_0, \cdots, j_{\ell} \leq n} } u^* \otimes e_j ^* I_m \otimes E_{j_\ell, i_\ell} X_{k_\ell} ^* I_m \otimes E_{j_{\ell-1}, i_{\ell -1}} X_{k_{\ell-1}} ^* \cdots I_m \otimes E_{j_1, i_1} X_{k_1} ^* I_m \otimes E_{j_0, i_0} xx^* \otimes E_{i,i} \\
& & I_m \otimes E_{i_0, j_0} X_{k_1} I_m \otimes E_{i_1, j_1} X_{k_2} \cdots  X_{k_\ell} I_m \otimes E_{i_\ell, j_\ell} u \otimes e_j  \\
& \leq & \| x \| ^2 \sum _{k_1, \cdots, k_\ell} 
\sum _{\substack{i_1, \cdots, i_\ell \\ j_0, \cdots, j_{\ell -1}} }
u^* \otimes e_{i_\ell } ^* X_{k_\ell} ^* \cdots I_m \otimes E_{j_1, i_1} X_{k_1} ^* I_m \otimes E_{j_0,j_0} X_{k_1} I_m \otimes E_{i_1, j_1} \cdots X_{k_\ell} u \otimes e_{i_\ell} \\
& \leq &  \| X \| _{\mr{col}} ^2 \| x \| ^2 \sum _{k_2, \cdots, k_{\ell}} 
\sum _{\substack{i_1, \cdots, i_\ell \\ j_1, \cdots, j_{\ell -1}} } u^* \otimes e_{i_\ell } ^* X_{k_\ell} ^* \cdots X_{k_2} ^* I_m \otimes E_{j_1, j_1} X_{k_2} \cdots X_{k_\ell} u \otimes e_{i_\ell}. \ea 
Summing over the indices $i_1$ and $j_1$ then yields,
\ba & = & n \| X \| _{\mr{col}} ^2 \| x \| ^2 \sum _{k_2, \cdots, k_{\ell} }
\sum _{\substack{i_2, \cdots, i_\ell \\ j_2, \cdots, j_{\ell -1}} }  u^* \otimes e_{i_\ell } ^* X_{k_\ell} ^* \cdots I_m \otimes E_{j_2, i_2} X_{k_2} ^*  X_{k_2} I_m \otimes E_{i_2,j_2} \cdots X_{k_\ell} u \otimes e_{i_\ell} \\
\cdots & \leq & n^{\ell} \| X \| _{\mr{col}} ^{2\ell} \| x \| ^2 \| u \| ^2. \ea

In particular, it follows that if $\| X \| ^2 _{\mr{col}}  < \frac{1}{n}$, then $K^{(n)} \{ X , x \otimes e_i, u \otimes e_j \} \in \scr{F} _n (\C ^d)$. It is then clear that inner products against this vector give the linear functional $\ell _{X,y,v}$, so that this linear functional is bounded on $\scr{F} _n (\C ^d)$.
\end{proof}

\begin{cor}
Any $f \in \scr{F} _n (\C ^d)$ defines a graded and direct sum-preserving function in $\frac{1}{\sqrt{n}} \cdot \B ^d _{\N n} (0 ^{(n)})$ via formal evaluation.
\end{cor}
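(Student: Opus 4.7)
The plan is to use the previous proposition to define $f(X)$ as a matrix via its entries, and then transfer direct sum preservation from the standard basis to all of $\scr{F}_n(\C^d)$ by a density/continuity argument.

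First I would invoke the previous proposition with the standard basis vectors of $\C^{mn}\simeq \C^m\otimes \C^n$: for $X\in \frac{1}{\sqrt{n}}\cdot \B^d_{mn}(0^{(n)})$ and $1\leq p,q\leq mn$, the functional $\ell_{X,e_p,e_q}$ extends to a bounded linear functional on $\scr{F}_n(\C^d)$. Defining
\[ (f(X))_{p,q} := \ell_{X,e_p,e_q}(f), \quad 1\leq p,q\leq mn, \]
produces a matrix $f(X)\in \C^{mn\times mn}$, which establishes that the function is graded. By construction this agrees with the original formal evaluation on the dense subspace of finite linear combinations of the basis elements $E_{\alpha,\beta}\star e_\omega$, and the defining maps $f\mapsto (f(X))_{p,q}$ are continuous on $\scr{F}_n(\C^d)$.

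Next I would verify direct sum preservation on the basis. Given $X\in \C^{(mn\times mn)\cdot d}$ and $X'\in \C^{(m'n\times m'n)\cdot d}$ both in $\tfrac{1}{\sqrt n}\cdot \B^d_{\N n}(0^{(n)})$, observe that under the identification $\C^{(m+m')n}\simeq \C^{mn}\oplus \C^{m'n}$, the operators $I_{m+m'}\otimes E_{a,b}$ decompose as $(I_m\otimes E_{a,b})\oplus (I_{m'}\otimes E_{a,b})$, and $(X\oplus X')_k = X_k\oplus X'_k$. Since a product of block-diagonal operators is block-diagonal, a direct substitution in the definition of formal evaluation yields
\[ E_{\alpha,\beta}\star e_\omega\,(X\oplus X') \;=\; E_{\alpha,\beta}\star e_\omega\,(X)\,\oplus\, E_{\alpha,\beta}\star e_\omega\,(X'), \]
and by linearity this extends to every finite linear combination of basis elements.

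Finally I would pass to the completion. For arbitrary $f\in \scr{F}_n(\C^d)$, pick a sequence $f_k$ of finite linear combinations of basis elements with $f_k\to f$. By continuity of the functionals $\ell_{X,e_p,e_q}$ and $\ell_{X\oplus X',e_p,e_q}$, the matrix entries of $f_k(X)$, $f_k(X')$ and $f_k(X\oplus X')$ converge to those of $f(X)$, $f(X')$ and $f(X\oplus X')$, respectively. Passing to the limit in the identity $f_k(X\oplus X')=f_k(X)\oplus f_k(X')$ gives $f(X\oplus X')=f(X)\oplus f(X')$, so $f$ respects direct sums. The only real subtlety is keeping track of the ampliations $I_m\otimes E_{a,b}$ versus $I_{m+m'}\otimes E_{a,b}$, which is a bookkeeping matter rather than a genuine obstacle.
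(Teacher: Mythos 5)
Your argument is correct and is exactly the natural deduction from the preceding proposition (the paper states this corollary without proof). You correctly reduce gradedness to the boundedness of the entry functionals $\ell_{X,e_p,e_q}$, verify direct-sum preservation on the orthonormal basis by the block-diagonal identity $I_{m+m'}\otimes E_{a,b} = (I_m\otimes E_{a,b})\oplus(I_{m'}\otimes E_{a,b})$ and $(X\oplus X')_k = X_k\oplus X'_k$, and then pass to all of $\scr{F}_n(\C^d)$ by continuity of the entry functionals and density of finite linear combinations; the one implicit point worth making explicit is that $\frac{1}{\sqrt n}\cdot\B^d_{\N n}(0^{(n)})$ is itself direct-sum closed (by Ruan's second axiom, $\|X\oplus X'\|_{\mr{col}}=\max\{\|X\|_{\mr{col}},\|X'\|_{\mr{col}}\}$), so the statement is well-posed.
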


\begin{remark}
If we equip $\C^{n\times n}$ with the normalized Hilbert--Schmidt inner product instead, then formal point evaluation is a bounded linear homomorphism on $\scr{F} _n (\C ^d)$ for any $X \in \mathbb{B} ^d _{\N n} (0 ^{(n)})$.
\end{remark}

It will be useful, in the sequel, to consider evaluations of $h \in \scr{F} _n (\C ^d)$ in the row-ball of radius $\frac{1}{\sqrt{n}}$ centred at $Y \in \cdn$. 

\begin{defn} \label{eval}
Given any $X \in \frac{1}{\sqrt{n}} \cdot \B ^d _{mn} (0 ^{(n)})$, we define the \emph{formal evaluation} of 
$$ h = \sum _{\substack{\alpha, \beta \in \Fn; \ \om \in \F \\ |\alpha | = |\beta | = |\om | +1}} \hat{h} _{\alpha, \beta; \om} E_{\alpha, \beta} \star e_\om \in \scr{F} _n (\C ^d), $$ as 
$$ \mr{ev} _X (h) := \sum _{\substack{\alpha, \beta \in \Fn; \ \om \in \F \\ |\alpha | = |\beta | = |\om | +1}} \hat{h} _{\alpha, \beta; \om} E_{\alpha, \beta} \star e_\om (X) \in \C ^{mn \times mn}. $$ 

Given any $X \in \frac{1}{\sqrt{n}} \cdot \B ^d _{m n} (Y)$, $m \in \N$, we define the \emph{evaluation at $X$ about $Y$} of $h \in \scr{F} _n (\C ^d)$,
 as 
\be h(X) := \mr{ev}_{X-I_m \otimes Y} (h) =  \sum _{\substack{\alpha, \beta \in \Fn; \ \om \in \F \\ |\alpha | = |\beta | = |\om | +1}} \hat{h} _{\alpha, \beta; \om} E_{\alpha, \beta} \star e_\om (X - I_m \otimes Y). \label{evalY} \ee
\end{defn}

In what follows, we will be primarily interested in evaluation of $h$ at $X \in \B ^d _{\N n} (Y)$ ``about $Y$", and we will henceforth refer to $h(X)$, defined above in Equation (\ref{evalY}), as the \emph{evaluation of $h$ at $X$}.  

\subsection{Matrix-centre realizations for elements of $\scr{F} _n (\C ^d)$}\label{sec:realforFockspace}

We can now construct a matrix centre realization of $h$ at $Y \in \cdn$ in the following way. Firstly, for $Z\in \C^{n\times n}$ and $1\leq k\leq d$, define
$$ A_k (Z) := \sum _{i,j =1} ^n Z E_{i,j} \otimes R_{i,j;k} ^* \in \C ^{n\times n} \otimes \scr{B} (\scr{F} _n (\C ^d)) \subseteq \scr{B} ( \C ^n \otimes \scr{F} _n (\C ^d) ), $$ 
so that $A_k : \C ^{n\times n} \rightarrow \scr{B} (\cH)$ is linear with $\cH := \C ^n \otimes \scr{F} _n (\C ^d)$. Next, define $c : \C ^n \rightarrow \C ^n \otimes \scr{F} _n (\C ^d) = \cH$ by  
$$  c v := v\otimes h, $$ and $b_* : \cH \rightarrow \C ^n$ by 
$$ b_* u \otimes f := \sum_{i,j =1} ^n \ip{E_{i,j} \star e_\emptyset}{f}_{\scr{F}_n(\C ^d)} E_{i,j}u.   $$ We then define $b : \C ^n \rightarrow \cH$ by $b:= b_* ^*$ so that $b_* = b^*$.

We claim that with this choice of $(A,b,c)_Y$ that $h \sim _Y (A,b,c)$. Indeed, let $\omega= w_1\cdots w_\ell\in\F$ be a word of length $\ell$, $X\in\C^{(mn\times mn)\cdot d}$ such that $\|X - I_m \otimes Y \|_\mathrm{col}<\|A\|_\mathrm{CB;row}^{-1}$ and $v\in\C^{mn}$. We then have, by definition,
\ba 
& & A^\omega \circ (X - I_m \otimes Y)^{\odot^\omega}(I_m\otimes c)v=  (\mathrm{id}_m\otimes A_{w_1})(X_{w_1}-I_m\otimes Y_{w_1})\cdots (\mathrm{id}_m\otimes A_{w_\ell})(X_{w_\ell}-I_m\otimes Y_{w_\ell})(v\otimes h)  \\ 
& = & \sum _{\substack{a_1, \cdots, a_\ell \\ b_1, \cdots, b_\ell} =1} ^n (X_{w_1} - I_m \otimes Y_{w_1} ) I_m \otimes E_{a_1,b_1}  \cdots (X_{w_\ell} - I_m \otimes Y_{w_\ell}) I_m \otimes E_{a_\ell, b_\ell} v \otimes R^* _{a_1, b_1; w_1} \cdots R^* _{a_\ell, b_\ell; w_\ell} h \\
& = & \sum _{\substack{a_1, \cdots, a_\ell \\ b_1, \cdots, b_\ell} =1} ^n (X_{w_1} - I_m \otimes Y_{w_1} ) I_m \otimes E_{a_1,b_1} (X_{w_2} - I_m \otimes Y_{w_2} )  \cdots (X_{w_\ell} - I_m \otimes Y_{w_\ell}) I_m \otimes E_{a_\ell, b_\ell} v \otimes (R^*) _{a_1\cdots b_\ell, a_1 \cdots b_\ell; \om} h. \ea
Therefore, setting $\alpha ' = a_1 \cdots a_\ell \in \Fn$ and $\beta ' = b_1 \cdots b_\ell \in \Fn$, $\alpha = a_0 \alpha'$ and $\beta = b_0 \beta'$, \footnotesize
\ba   & &  I_m \otimes b^* A^\om \circ(X- I_m \otimes Y)^{\odot^\om}(I_m \otimes c )v  \\
& = &  \sum _{\substack{\alpha', \beta' \in \Fn \\ |\alpha '| = |\beta ' | = \ell}} \sum _{a_0, b_0 =1} ^n \ip{ E_{a_0, b_0} \star e_\emptyset}{(R^*) _{\alpha', \beta' ; \om} h}_{\scr{F} _n (\C ^d)} I_m \otimes E_{a_0, b_0}  (X_{w_1} - I_m \otimes Y_{w_1} ) I_m \otimes E_{a_1,b_1}   \cdots (X_{w_\ell} - I_m \otimes Y_{w_\ell}) I_m \otimes E_{a_\ell, b_\ell} v \\
& = & \sum _{\substack{\alpha, \beta \in \Fn \\ |\alpha| = |\beta | = \ell +1}} \ip{ R_{\alpha ^{';\mrt}, \beta ^{';\mrt}; \om ^\mrt} E_{a_0, b_0} \star e_\emptyset}{h}_{\scr{F} _n (\C ^d)} E _{\alpha, \beta} \star e_\om (X-I_m \otimes Y)v\\
& = & \sum _{\substack{\alpha, \beta \in \Fn \\ |\alpha| = |\beta | = \ell +1}} \ip{ E_{\alpha,\beta} \star e_\om}{h}_{\scr{F} _n (\C ^d)} E _{\alpha, \beta} \star e_\om (X-I_m \otimes Y)v,  \quad \quad \quad \quad \mbox{by Equation (\ref{RONB}),}  \\
& = & \sum _{\substack{\alpha, \beta \in \Fn \\ |\alpha| = |\beta | = \ell +1}} \hat{h}_{\alpha, \beta ; \om} \, E _{\alpha, \beta} \star e_\om (X-I_m \otimes Y)v. \ea 
\normalsize

Hence,
\ba \sum_{\omega\in \F} I_m\otimes b^* \, A^\omega\circ(X-I_m \otimes Y)^{\odot^\om}\, I_m\otimes c  & = & \sum _{\om \in \F} \sum _{\substack{\alpha, \beta \in \Fn \\ |\alpha | = |\beta | = |\om | +1}} \hat{h}_{\alpha, \beta ; \om} E _{\alpha, \beta} \star e_\om (X-I_m \otimes Y) \\
& = & h (X), \ea 
is the evaluation of $h$ at $X \in\frac{1}{\sqrt{n}} \cdot \mathbb{B} ^d _{mn} (Y)$, as defined in Definition \ref{eval}, and this coincides with the definition of evaluation of $h$ at $X$, viewed as the quantized transfer function of $(A,b,c)_Y$, $h \sim (A,b,c)_Y$. 

\subsection{Matrix-centre realizations for uniformly analytic NC functions}

\label{formalreal}

Let $f$ be a uniformly analytic NC function in a uniformly open neighborhood of the matrix point $Y\in \cdn$. In this section we will construct a matrix-centre realization at $Y$ representing $f$ in a uniformly open neighborhood of $Y$. Recall from Subsection \ref{realTT},   for each $\ell\in\N$ there exists a completely bounded $\ell-$linear map $f_\ell:\C^{(n\times n)\cdot d}\times \cdots\times \C^{(n\times n)\cdot d}\to \C^{n\times n}$ such that
\begin{align*}
    f(X)=\sum_{\ell=0}^\infty f_\ell \circ \left(X-I_m \otimes Y \right)^{\odot ^\ell}; \quad \quad X \in \C ^{(mn \times mn)\cdot d},
\end{align*}
where the series converges absolutely and uniformly in any NC column ball $r\cdot \B^d _{\N n}(Y)$ of radius $r<R_f ^Y$, where $R_f^Y$ is the radius of convergence of $f$,
\begin{align*}
    \frac{1}{R _f ^Y}=\limsup_{\ell\to\infty}\sqrt[\ell]{\|f_\ell\|_\mathrm{CB}},
\end{align*}
and $\|f_\ell\|_\mathrm{CB}$ denotes the completely bounded norm of $f_\ell$. See Subsection \ref{realTT} and \cite[Theorem 7.2, Theorem 8.11]{KVV} for more details.

To be more precise, the maps $f_\ell$ are given by the NC difference--differential operators of $f$. As discussed in Subsection \ref{realTT}, this can be further expanded as a series indexed by the free monoid:
\begin{align*}
    f(X)=\sum_{\omega\in \F} f_\omega \circ(X-I_m\otimes Y)^{\odot^\omega} =\sum_{\ell=0}^\infty \sum_{|\omega|=\ell}  f_\omega \circ(X-I_m\otimes Y)^{\odot^\omega},
\end{align*}
where each $f_\omega:\C^{n\times n}\times\cdots\times\C^{n\times n}\to\C^{n\times n}$ is an $\ell=|\omega|$-linear map defined by the partial difference--differential operators of $f$.
Moreover, following \cite[Arguments following Remark 4.5]{KVV}, for every $\omega\in \F$, $|\omega|=\ell$, there exist $N_\omega\in \N$ and matrices $A_{\omega, a}^{(b)}\in \C^{n\times n}$, $0\leq a\leq \ell$, $1\leq b\leq N_\omega$, such that
\begin{align*}
    f_\omega \circ(X-I_m\otimes Y)^{\odot^\omega} = \sum_{b=1}^{N_\omega}
    I_m\otimes A_{\omega,0}^{(b)}(X_{\omega_1}-I_m\otimes Y_{\omega_1})I_m\otimes A_{\omega,1}^{(b)}\cdots (X_{\omega_\ell}-I_m\otimes Y_{\omega_\ell})I_m\otimes A_{\omega,\ell}^{(b)}.
\end{align*}
We expand this even further by decomposing the $A^{(b)}_{\om, a}$ as linear combinations of the standard matrix units. If $\om = w_1 \cdots w_\ell \in \F$ and we write $\alpha =a_0 \cdots a_\ell$, $\beta = b_0 \cdots b_\ell$ for $\alpha, \beta \in \mathbb{F} ^+ _n$ of length $\ell +1$, we obtain,
\begin{align*} 
    f_\omega \circ(X-I_m\otimes Y)^{\odot^\omega} 
&    =  \sum_{\substack{\alpha, \beta \in \mathbb{F} _n ^+ \\ |\alpha | = |\beta| = \ell +1}} \hat{f} _{\alpha, \beta ; \om} \;I_m\otimes E_{a_0, b_0}(X_{w_1}-I_m\otimes Y_{w_1})I_m\otimes E_{a_1,b_1}\cdots (X_{w_\ell}-I_m\otimes Y_{w_\ell})I_m\otimes E_{a_\ell ,b_\ell} \\
&      =   \sum _{\alpha, \beta} \hat{f} _{\alpha, \beta; \om} \,  E_{\alpha,\beta} \star e_\om (X -I_m \otimes Y),  \end{align*}
for appropriate coefficients $\hat{f} _{\alpha, \beta ; \om} \in\C$.

In particular, the $\ell = |\om |-$linear map $f_\omega$ is given by
\begin{align*}
    f_\omega: \C^{n\times n}\times \cdots\times \C^{n\times n}&\to \C^{n\times n},\\
    (G_1,\cdots,G_\ell)\mapsto \sum_{\substack{\alpha, \beta \in \mathbb{F} ^+ _n \\ |\alpha| = | \beta| = \ell +1}} \hat{f} _{\alpha, \beta; \om} & \; E_{a_0,b_0}G_{1}E_{a_1,b_1}\cdots G_{\ell}E_{a_\ell, b_\ell},
\end{align*}
where $\alpha = a_0 \cdots a_\ell$ and $\beta = b_0 \cdots b_\ell$, $\alpha, \beta \in \Fn$.

In conclusion, for $X\in \C^{(mn\times mn)\cdot d}$ sufficiently close to $Y$, $f(X)$ is given by
$$ f(X) = \sum _{\om\in\F} \sum _{\substack{\alpha, \beta \in \mathbb{F} ^+ _n, \\ |\alpha | = |\beta | = |\om | +1}} \hat{f} _{\alpha, \beta; \om} \, E_{\alpha,\beta} \star e_\om (X - I_m \otimes Y). $$ 

Alternatively, we can identify $f$ with the formal series
\be  f =  \sum _{\substack{\alpha, \beta \in \mathbb{F} ^+ _n, \ \om \in \F \\ |\alpha | = |\beta | = |\om | +1}} \hat{f} _{\alpha, \beta; \om} \, E_{\alpha,\beta} \star e_\om. \label{fformalseries} \ee 

If the coefficients of this series are square summable, then $f$ is an element of the matrix valued Fock space $\scr{F}_n(\C ^d)$, which was introduced in Definition \ref{defn:matrixFockspace}. In this case, $f$ would have a matrix-centre realization at $Y$, as shown in the previous Subsection \ref{sec:realforFockspace}. However, we will show that $f$ has a matrix-centre realization at $Y$ regardless. 

To see this, first consider $V:= \C ^{dn^2}$ with orthonormal basis $\{v_{i,j,k} | \  1\leq i,j\leq n,\; 1\leq k\leq d\}$. We can then define a canonical unitary $U:\scr{F}_n (\C ^d)\rightarrow \scr{F}(\C ^{dn^2})\otimes\C^{n\times n}$ as follows: If $\alpha, \beta \in \Fn$, $\om \in \F$ with $|\alpha| = | \beta | = | \om | $, $\om = w_1 \cdots w_\ell$, $\alpha =  a_0 \cdots a_{\ell -1}$ and $\beta = b_0 \cdots b_{\ell-1}$ and $1 \leq a_\ell, b_\ell \leq n$, then 

\begin{eqnarray} \label{defn:Fockspaceunitary}
U E_{\alpha a_\ell, \beta b_\ell} \star e_\om & = & U \left( E_{a_0, b_0} \otimes e_{w_1} \otimes E_{a_1,b_1} \otimes e_{w_2} \cdots E_{a_{\ell -1} , b_{\ell -1}} \otimes e_{w_\ell} \right) \otimes E_{a_\ell, b_\ell} \nn  \\
& := &  \left( v_{a_0,b_0,w_1 } \otimes v_{a_1,b_1,w_2} \otimes \cdots \otimes v_{a_{\ell -1}, b_{\ell -1}, w_\ell} \right) \otimes E_{a_\ell, b_\ell} \nn \\
    &=:& v _{\alpha, \beta; \om} \otimes E _{a_\ell, b_\ell} \in \scr{F} (\C ^{dn^2} ) \otimes \C ^{n \times n}. \end{eqnarray} In the above we introduced the notation: For any $\alpha=a_0 \cdots a _{\ell -1}, \, \beta = b_0 \cdots b_{\ell -1} \in \Fn$ and $\om = w_1 \cdots w_\ell \in \F$, all of length $\ell$,
$$ v _{\alpha, \beta ; \om} := v_{a_0,b_0,w_1 } \otimes v_{a_1,b_1,w_2} \otimes \cdots \otimes v_{a_{\ell -1}, b_{\ell -1}, w_\ell}. $$ Hence, 
$$ \{ v_{\alpha, \beta; \om} | \ \alpha, \beta \in \Fn, \ \om \in \F, \ |\alpha | = |\beta | = | \om | \}, $$ is the standard orthonormal basis of $\scr{F} (\C ^{dn^2} )$. For $1\leq i,j \leq n$ and $1\leq k \leq d$, define the left creation operators on $\scr{F} (\C ^{dn^2})$ by: For any $\alpha, \beta \in \mathbb{F} _n ^+$ and $\om \in \F$ obeying $|\alpha | = |\beta | = |\om |$,
$$ S_{i,j,k} v_{\alpha, \beta ; \om} := v_{i\alpha,j\beta; k \om}. $$
Clearly, $U L_{i,j;k} U^*=S_{i,j;k}\otimes I_n$, so that the weak operator topology (WOT)--closed unital algebra generated by $\{  L_{i,j;k} | \ 1\leq i,j \leq n,\;1\leq k\leq d\}$ is unitarily equivalent to the WOT--closed unital algebra generated by $\{S_{i,j;k}\otimes I_n | 1\leq i,j \leq n,\;1\leq k\leq d\}$, which is $\mathbb{H} ^\infty_{dn^2}\otimes \C I_n$.

Now apply the unitary $U:\scr{F}_n(\C ^d)\to \scr{F}(\C^{d n^2})\otimes \C^{n\times n}$ (see (\ref{defn:Fockspaceunitary})), to (\ref{fformalseries}) in a formal way to obtain
\be \wt{f} :=  \sum _{a,b =1} ^n \underbrace{\sum _{\substack{\alpha, \beta \in \Fn; \ \om \in \F \\ |\alpha | = |\beta | = |\om|}} \hat{f} _{\alpha a ,\beta b;\om} v _{\alpha,\beta ; \om}}_{=: f _{a,b}} \otimes E_{a,b}. \label{formalseries1} \ee 

Each $f_{a,b}$, $1\leq a,b \leq n$, can be viewed as a formal power series in $dn^2$ variables. Here, evaluation of $f_{a,b}$ at an element $X=(X_{i,j,k})_{1\leq i,j\leq n,\;1\leq k\leq d}\in\C^{(\N\times\N)\cdot dn^2}$ is given by
\begin{eqnarray} f_{a,b} (X) & = & \sum _{\ell=0} ^\infty \sum _{w_1, \cdots, w_\ell =1} ^d \sum _{\substack{a_0, \cdots, a_{\ell-1} \\ b_0, \cdots, b_{\ell -1} } = 1} ^n \hat{f} _{a_0 \cdots a_{\ell-1} a, b_0 \cdots b_{\ell-1} b; w_1 \cdots w_\ell} \; X_{a_0,b_0, w_1} X_{a_1, b_1, w_2} \cdots X_{a_{\ell -1}, b_{\ell -1}, w_\ell} \nn \\
& =: & \sum _{\substack{\alpha, \beta \in \Fn; \ \om \in \F \\ |\alpha | = |\beta | = |\om | }} \hat{f} _{\alpha a, \beta b ; \om} \; X ^{\alpha, \beta , \om} = \sum_{\ell=0}^\infty  \sum _{\substack{\alpha, \beta \in \Fn; \ \om \in \F \\ |\alpha | = |\beta | = |\om |=\ell }} \hat{f} _{\alpha a, \beta b ; \om} \; X ^{\alpha, \beta , \om}. \label{evalform} \end{eqnarray}
We will often make use of this identification and treat corresponding elements as one and the same. However, it is not immediately obvious whether all $f_{a,b}$ have a non-zero radius of convergence. Our next goal is to show that this is indeed the case. But first we require the following two technical lemmas.

\begin{lemma}\label{lem:haagerupinequality}
    Consider the tensor product $\C^{(n\times n) \otimes ^\ell}$, $\ell\in\N$. Then, $\|A\|_{\scr{B}(\C^{n \otimes^\ell})}\leq \|A\|_h$ for all $A\in \C^{(n\times n)\otimes ^\ell}$.
\end{lemma}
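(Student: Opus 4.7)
The statement is essentially the well-known fact from operator space theory that the Haagerup tensor norm on $(\C^{n\times n})^{\otimes \ell}$ dominates the spatial (minimal) operator space tensor norm, and that for the operator space $E = \C^{n\times n}$ equipped with its canonical structure from $\scr{B}(\C^n)$, the minimal tensor norm on $E^{\otimes \ell}$ agrees with the operator norm under the natural embedding $E^{\otimes \ell} \hookrightarrow \scr{B}((\C^n)^{\otimes \ell})$. The quickest route is simply to cite this from a standard reference such as Paulsen's book on completely bounded maps, or Effros--Ruan.

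For a self-contained direct argument, the plan is to start from an arbitrary Haagerup factorization $A = B^{(1)} \odot B^{(2)} \odot \cdots \odot B^{(\ell)}$ with $B^{(s)} \in E^{m_{s-1} \times m_s}$ and $m_0 = m_\ell = 1$. I would first reinterpret each $B^{(s)}$ as a bounded linear map $B^{(s)} : \C^n \otimes \C^{m_s} \to \C^n \otimes \C^{m_{s-1}}$, noting that its operator norm coincides with its operator-space matrix norm $\|B^{(s)}\|_{E^{m_{s-1} \times m_s}}$ (this is how the row/column norms on $E^{1\times m_1}$, $E^{m_{\ell-1}\times 1}$ and the intermediate rectangular norms are defined). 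A direct index computation on simple tensors $v_1 \otimes \cdots \otimes v_\ell$ will then show that $A$, viewed as an operator on $(\C^n)^{\otimes \ell}$, equals the composition
\[ (B^{(1)} \otimes I^{\otimes(\ell-1)})\,\sigma_1\,(I \otimes B^{(2)} \otimes I^{\otimes(\ell-2)})\,\sigma_2 \cdots \sigma_{\ell-1}\,(I^{\otimes(\ell-1)} \otimes B^{(\ell)}), \]
where each $\sigma_s$ is a unitary permutation that routes the auxiliary $\C^{m_s}$ summation slot produced by $B^{(s+1)}$ into the position that will be contracted by $B^{(s)}$. Since swaps are unitary and operator norm is submultiplicative, this immediately yields $\|A\|_{\scr{B}((\C^n)^{\otimes \ell})} \leq \prod_{s=1}^\ell \|B^{(s)}\|_{E^{m_{s-1}\times m_s}}$. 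Taking the infimum over all admissible Haagerup factorizations then gives $\|A\|_{\scr{B}(\C^{n\otimes^\ell})} \leq \|A\|_h$.

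The only genuine obstacle is the purely combinatorial bookkeeping of tracking the positions of the auxiliary $\C^{m_s}$ factors through the composition of swaps. Conceptually, no new ideas are required beyond recognizing that the Haagerup factorization of $A$ \emph{is} a factorization of the corresponding operator on $(\C^n)^{\otimes\ell}$ up to unitary reorderings.
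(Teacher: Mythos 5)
Your proof is correct and takes essentially the same route as the paper's: both convert a Haagerup factorization $A = A^{(1)} \odot \cdots \odot A^{(\ell)}$ into an actual operator factorization of $A$ acting on $(\C^n)^{\otimes\ell}$, bound $\|A\|$ by the product of the factor norms using submultiplicativity, and take the infimum over all such factorizations. The only difference is bookkeeping: you route the auxiliary $\C^{m_s}$ indices through the composition with explicit permutation unitaries, whereas the paper ampliates each $A^{(s)}$ to a matrix $B^{(s)}$ over $\scr{B}(\C^{n\otimes^\ell})$ (inserting $I_n$ in the other $\ell-1$ tensor slots) so that the ordinary block matrix product $B^{(1)}\cdots B^{(\ell)}$ contracts the auxiliary indices automatically, with no permutations needed.
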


\begin{proof}
    Let $A\in \C^{(n\times n)\otimes ^\ell}$ and let $A^{(s)}\in(\C^{n\times n})^{m_{s-1}\times m_s}$, $1\leq s\leq \ell$ with $m_0=m_\ell=1$ such that
    \begin{align*}
        A=A^{(1)}\odot \cdots \odot A^{(\ell)}.
    \end{align*}
    We then have 
    \begin{align*}
        A=B^{(1)}\cdots B^{(\ell)},
    \end{align*}
    where $B^{(s)}\in (\C^{(n\times n)\otimes ^\ell})^{m_s\times m_{s-1}}$, $1\leq s\leq \ell$, and its entry at position $(i,j)$ is given by
    \begin{align*}
        B^{(s)}_{i,j}=I\otimes \cdots\otimes I\otimes A^{(s)}_{i,j}\otimes I\otimes \cdots\otimes I,
    \end{align*}
    where $I$ denotes the identity matrix of size $n\times n$ and $A^{(s)}_{i,j}$ appears at the $s$-th position. Note that $\|B^{(s)}\|=\|A^{(s)}\|$ for all $1\leq s\leq \ell$, where $\|\cdot\|$ denotes the respective operator norms. Therefore,
    \begin{align*}
        \|A\|_{\scr{B}(\C^{n\otimes^\ell})}\leq \|B^{(1)}\|\cdots \|B^{(\ell)}\|= \|A^{(1)}\|\cdots \|A^{(\ell)}\|.
    \end{align*}
    Finally, taking the infimum over all such sets of matrices $A^{(0)},\dots,A^{(\ell)}$ yields the desired inequality, by definition of the Haagerup tensor norm. 
\end{proof}

\begin{lemma}\label{lem:radiusinequality}
    Given $f_{a,b} \in \C \langle \! \langle \, \fz _1, \cdots, \fz _{dn^2} \rangle \! \rangle$ with formal power series coefficients $\hat{f} _{\alpha,\beta; \om}$ as defined above in Equation (\ref{formalseries1}), there exists a constant $C>0$ such that for all $\ell\in\N$ one has
    \begin{align*}
        \sqrt[2\ell]{\sum_{\substack{\alpha, \beta \in \Fn; \ \om \in \F \\ |\alpha | = | \beta | = \ell+1 = |\om | +1}} |\hat{f}_{\alpha, \beta ;\om}|^2}\leq C  \sqrt[\ell]{\|f_\ell\|_{\mathrm{CB}}}  .
    \end{align*}
\end{lemma}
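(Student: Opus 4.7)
The plan is to show that each individual Taylor--Taylor coefficient satisfies $|\hat{f}_{\alpha,\beta;\om}|\le\|f_\ell\|_{\mathrm{CB}}$, and then absorb the polynomial count of such triples into a constant after taking the $2\ell$-th root.

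First, I would extract each coefficient by evaluating $f_\om$ on matrix-unit inputs. Recalling that
$$f_\om(G_1,\ldots,G_\ell)=\sum_{\substack{\alpha,\beta\in\Fn\\|\alpha|=|\beta|=\ell+1}}\hat{f}_{\alpha,\beta;\om}\,E_{a_0,b_0}G_1 E_{a_1,b_1}\cdots G_\ell E_{a_\ell,b_\ell},$$
the choice $G_k=E_{b'_{k-1},a'_k}$ for a fixed sequence of indices causes most products to vanish by the matrix-unit relations, leaving
$$f_\om(E_{b'_0,a'_1},\ldots,E_{b'_{\ell-1},a'_\ell})=\sum_{a_0,b_\ell}\hat{f}_{a_0 a'_1\cdots a'_\ell,\,b'_0\cdots b'_{\ell-1}b_\ell;\,\om}\,E_{a_0,b_\ell}.$$
Thus every coefficient $\hat{f}_{\alpha,\beta;\om}$ appears as a single entry of $f_\om$ evaluated on matrix-unit inputs of norm $1$; ranging over the fixed indices and the output entry $(a_0,b_\ell)$ sweeps out all triples.

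Next, I would bound $\|f_\om(G_1,\ldots,G_\ell)\|$ by $\|f_\ell\|_{\mathrm{CB}}\prod\|G_k\|$ via specialization to a single component of $\C^d$. For $\om=w_1\cdots w_\ell$, setting $H^{(k)}:=G_k\otimes e_{w_k}\in\cdn$ (which has column norm $\|H^{(k)}\|_{\mathrm{col}}=\|G_k\|$), multi-linearity gives $f_\om(G_1,\ldots,G_\ell)=f_\ell(H^{(1)},\ldots,H^{(\ell)})$. The Haagerup norm of the simple tensor $H^{(1)}\odot\cdots\odot H^{(\ell)}$ in $E^{\otimes_h \ell}$ is at most $\prod\|H^{(k)}\|_{\mathrm{col}}$, by taking each $H^{(k)}$ as a $1\times 1$ matrix in the defining factorization of the Haagerup norm. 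Applying the $m=1$ case of the CB estimate for the linearization $f^{(\ell)}$, one obtains
$$\|f_\om(G_1,\ldots,G_\ell)\|_{\C^{n\times n}}\le\|f_\ell\|_{\mathrm{CB}}\prod_{k=1}^{\ell}\|G_k\|,$$
which, evaluated on matrix-unit inputs of norm $1$ as in the previous step, yields $|\hat{f}_{\alpha,\beta;\om}|\le\|f_\ell\|_{\mathrm{CB}}$ uniformly in $(\alpha,\beta,\om)$.

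Finally, the number of triples $(\alpha,\beta,\om)$ with $\alpha,\beta\in\Fn$, $|\alpha|=|\beta|=\ell+1$ and $\om\in\F$, $|\om|=\ell$ is exactly $d^\ell n^{2(\ell+1)}$, so
$$\sum_{\substack{\alpha,\beta\in\Fn;\,\om\in\F\\|\alpha|=|\beta|=\ell+1=|\om|+1}}|\hat{f}_{\alpha,\beta;\om}|^2\le d^\ell n^{2(\ell+1)}\|f_\ell\|_{\mathrm{CB}}^2.$$
Taking the $2\ell$-th root yields $\sqrt{d}\,n^{1+1/\ell}\|f_\ell\|_{\mathrm{CB}}^{1/\ell}\le\bigl(n^2\sqrt{d}\bigr)\|f_\ell\|_{\mathrm{CB}}^{1/\ell}$ for every $\ell\ge 1$, so $C:=n^2\sqrt{d}$ suffices. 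The only slightly delicate point is the Haagerup-norm bound on a simple tensor in the middle step; this is immediate from the definition, but it is the place where the complete boundedness hypothesis is actually used, so it is worth flagging as the one non-routine ingredient in an otherwise direct argument.
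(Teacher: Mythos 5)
Your proof is correct, and it takes a genuinely different and more elementary route than the paper's. The paper argues at the level of the full coefficient tensor $M_\omega := \sum_{\alpha,\beta}\hat{f}_{\alpha,\beta;\omega}E_{\alpha,\beta}\in\C^{(n\times n)\otimes^{\ell+1}}$: Step 1 lower-bounds the Hilbert--Schmidt content $\sum|\hat{f}_{\alpha,\beta;\omega}|^2$ by $n^{\ell+1}\|M_\omega\|^2$ via the $C^*$-identity and a trace estimate, Step 2 uses Lemma \ref{lem:haagerupinequality} together with the Chatterjee--Sinclair isometry \cite{ChatterjeeSinclairIsometry} to get $\|M_\omega\|\le\|f_\omega\|_{\mathrm{CB}}$, and Step 3 cites \cite[Eq.\ (8.32)]{KVV} for $\|f_\omega\|_{\mathrm{CB}}\le\|f_\ell\|_{\mathrm{CB}}$. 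You instead extract each scalar $\hat{f}_{\alpha,\beta;\omega}$ as a single matrix entry of $f_\omega$ evaluated at matrix units, bound $\|f_\omega(G_1,\dots,G_\ell)\|\le\|f_\ell\|_{\mathrm{CB}}\prod\|G_k\|$ directly by embedding the directions into a single slot of $\cdn$ and using the elementary Haagerup bound on a simple tensor, and then count triples. This avoids both the $C^*$/trace trick and the Chatterjee--Sinclair isometry entirely, and it even derives the analogue of Step 3 by hand rather than citing KVV. What you give up is sharpness: your count yields $n^{2(\ell+1)}d^\ell$ and hence $C=n^2\sqrt{d}$, whereas the paper's $\ell^2$-to-operator-norm argument gives $n^{\ell+1}d^\ell$ and $C=n\sqrt{d}$. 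Both constants are perfectly adequate for Lemma \ref{lem:radiuspositive}, which only needs the $\limsup$ to be finite and the exponent $1/\ell$ to kill any fixed-base geometric factor, so your argument is a legitimate (and in some ways cleaner) substitute.
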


\begin{proof}
    Fix $\ell\in\N$. The proof is broken down into several steps.

    \emph{Step 1}: Fix a word $\om=w_1\cdots w_\ell\in \F$ of length $\ell$. In this step, we show that 
    \begin{align*}
        \frac{1}{n^{\ell+1}}\sum_{\substack{\alpha, \beta \in \Fn \\ |\alpha | = | \beta | = \ell +1}} |\hat{f}_{\alpha, \beta ;\om}|^2 
        \leq \left\|\sum_{\substack{\alpha, \beta \in \Fn \\ |\alpha | = | \beta | = \ell +1}} \hat{f}_{\alpha, \beta ;\om} \; E_{\alpha,\beta}\right\|_{\scr{B}(\C^{n\otimes^{\ell+1}})}^2,
    \end{align*}
    where $\C ^{n \otimes ^{\ell +1}} := \C ^n \otimes \C ^n \otimes \cdots \otimes \C ^n$ denotes the $\ell+1-$fold tensor product of $\C ^n$ with itself. 
    Here, if $\alpha=a_0\cdots a_\ell\in \Fn$ and $\beta=b_0\cdots b_\ell\in \Fn$, we denote
    $$E_{\alpha,\beta}:=E_{a_0,b_0}\otimes \dots\otimes E_{a_\ell,b_\ell}\in \C^{(n\times n)\otimes ^{\ell+1}}.$$ One can readily check that $E_{\alpha, \beta} ^* = E_{\beta, \alpha}$ and that $E_{\alpha, \beta } E_{\ga, \sigma} = E_{\alpha , \sigma} \delta _{\beta, \ga}$. 
    To prove the desired inequality, we start with the right hand side and first apply the $C^*-$identity:
    \begin{align*}
        \left\|\sum_{\substack{\alpha, \beta \in \Fn \\ |\alpha | = | \beta | = \ell +1}} \hat{f}_{\alpha, \beta ;\om} \; E_{\alpha,\beta}\right\|_{\scr{B}(\C^{n\otimes ^{\ell+1}})}^2&=\left\|\sum_{\substack{\alpha, \beta, \alpha', \beta' \in \Fn \\ |\alpha | = | \beta | = |\alpha'| = |\beta'| = \ell +1}} \hat{f}_{\alpha, \beta ;\om}\overline{\hat{f}_{\alpha', \beta' ;\om}} \; E_{\alpha,\beta}E_{\alpha',\beta'}^*\right\|_{\scr{B}(\C^{n\otimes ^{\ell+1}})}\\
       &= \left\|\sum_{\substack{\alpha, \beta, \alpha' \in \Fn \\ |\alpha | = | \beta | = |\alpha'| = \ell +1}} \hat{f}_{\alpha, \beta ;\om}\overline{\hat{f}_{\alpha', \beta ;\om}} \; E_{\alpha,\alpha'}\right\|_{\scr{B}(\C^{n\otimes ^{\ell+1}})}.
    \end{align*}
    Observe that $E_{\alpha,\alpha'}$ is an $n^{\ell+1}\times n^{\ell+1}$ matrix with one of its entries being $1$ and all other entries being $0$. This $1$ will be on the diagonal of the matrix if and only if $\alpha=\alpha'$. Moreover, note that for any matrix $Z$ of size $n^{\ell+1}\times n^{\ell+1}$ with entries in $\C$, we have $|\mathrm{tr}(Z)|\leq n^{\ell+1} \max_{i=1,\dots,n^{\ell+1}} |Z_{ii}|\leq n^{\ell+1} \|Z\|$ and hence $\|Z\|\geq \frac{1}{n^{\ell+1}}|\mathrm{tr}(Z)|$. Applying these two facts, we obtain
\ba \left\|\sum_{\substack{\alpha, \beta, \alpha' \in \Fn \\ |\alpha | = | \beta | = |\alpha'| = \ell +1}} \hat{f}_{\alpha, \beta ;\om}\overline{\hat{f}_{\alpha', \beta ;\om}} \; E_{\alpha,\alpha'}\right\|_{\scr{B}(\C^{n\otimes ^{\ell+1}})}
       & \geq &  \frac{1}{n^{\ell+1}}\mathrm{tr}\left(\sum_{\substack{\alpha, \beta, \alpha' \in \Fn \\ |\alpha | = | \beta | = |\alpha'| = \ell +1}} \hat{f}_{\alpha, \beta ;\om}\overline{\hat{f}_{\alpha', \beta ;\om}} \; E_{\alpha,\alpha'}\right) \\
        &=&\frac{1}{n^{\ell+1}}\sum_{\substack{\alpha, \beta \in \Fn \\ |\alpha | = | \beta | = \ell +1}} |\hat{f}_{\alpha, \beta ;\om}|^2. \ea
    
    \emph{Step 2}: By Lemma \ref{lem:haagerupinequality}, we have $\|A\|_{\scr{B}(\C^{n\otimes^{\ell+1}})}\leq \|A\|_h$ for all $A\in \C^{(n\times n)\otimes^{\ell+1}}$. Moreover, \cite[Theorem 8]{ChatterjeeSinclairIsometry} shows that the map
    \begin{align*}
        \underbrace{\C^{n\times n}\otimes_h\cdots\otimes_h\C^{n\times n}}_{(\ell+1)\times }&\to \mathrm{CB}(\underbrace{\C^{n\times n},\dots,\C^{n\times n}}_{\ell\times};\C^{n\times n})\\
        A_0\otimes \cdots\otimes A_{\ell}&\mapsto \Big((X_1,\dots,X_\ell)\mapsto A_0X_1A_1\cdots A_{\ell-1} X_\ell A_{\ell}\Big)
    \end{align*}
    is an isometry. 
    By combining these facts, we obtain,
    \begin{align*}
        &\left\|\sum_{\substack{\alpha, \beta \in \Fn \\ |\alpha | = | \beta | = \ell +1}} \hat{f}_{\alpha, \beta ;\om} \; E_{\alpha,\beta}\right\|_{\scr{B}(\C^{n\otimes^{\ell+1}})}\leq \left\|\sum_{\substack{\alpha, \beta \in \Fn \\ |\alpha | = | \beta | = \ell +1}} \hat{f}_{\alpha, \beta ;\om} \; E_{\alpha,\beta}\right\|_h \\
        =& \left\|(G_1,\dots,G_\ell)\mapsto \sum_{\substack{\alpha, \beta \in \mathbb{F} ^+ _n \\ |\alpha| = | \beta| = \ell +1}} \hat{f} _{\alpha, \beta; \om} \; E_{a_0,b_0}G_{1}E_{a_1,b_1}\cdots G_{\ell}E_{a_\ell, b_\ell}\right\|_{\mathrm{CB}}= \|f_\omega\|_{\mathrm{CB}}.
    \end{align*}

    \emph{Step 3}:  By \cite[Equation (8.32)]{KVV}, we have
    \begin{align*}
        \|f_\omega\|_{\mathrm{CB}}\leq \|f_\ell\|_{\mathrm{CB}},
    \end{align*}
    for all $\om \in \F$ with $|\om|=\ell$. Therefore
    \begin{align*}
        \sum_{\substack{\om \in \F \\ |\om|=\ell}} \|f_\omega\|_{\mathrm{CB}}^2 \leq \sum_{\substack{\om \in \F \\ |\om|=\ell}} \|f_\ell\|_{\mathrm{CB}}^2= d^\ell \|f_\ell\|_{\mathrm{CB}}^2.
    \end{align*}

    Finally, we can combine steps 1 through 3 to obtain the following.
\ba \frac{1}{n^{\ell+1}}\sum_{\substack{\alpha, \beta \in \Fn; \ \om \in \F \\ |\alpha | = | \beta | = \ell+1 = |\om | +1}} |\hat{f}_{\alpha, \beta ;\om}|^2 & = & \frac{1}{n^{\ell+1}}\sum_{\substack{\om \in \F \\ |\om|=\ell}}\sum_{\substack{\alpha, \beta \in \Fn \\ |\alpha | = | \beta | = \ell +1}} |\hat{f}_{\alpha, \beta ;\om}|^2  \\
       & \leq & \sum_{\substack{\om \in \F \\ |\om|=\ell}} \left\|\sum_{\substack{\alpha, \beta \in \Fn \\ |\alpha | = | \beta | = \ell +1}} \hat{f}_{\alpha, \beta ;\om} \; E_{\alpha,\beta}\right\|_{\scr{B}(\C^{n\otimes^{\ell+1}})}^2\\
         &\leq &  \sum_{\substack{\om \in \F \\ |\om|=\ell}} \|f_\om\|_\mathrm{CB}^2
         \leq d^{\ell}\|f_\ell\|_{\mathrm{CB}}^2. \ea
    Multiplying both sides of the inequality by $n^{\ell+1}$ and then taking the $2\ell$--th root of both sides completes the proof.
\end{proof}

Using the above lemma, we are now able to show that each of the power series $f_{a,b}$ has a positive radius of convergence.

\begin{lemma}\label{lem:radiuspositive}
    Let $f$ and $f_{a,b}$, $1\leq a,b\leq n$ be as in Equation (\ref{formalseries1}) above. Then the radius of convergence of each $f_{a,b}$ (at $0$) is positive.
\end{lemma}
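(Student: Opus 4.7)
The plan is to apply Popescu's Cauchy--Hadamard formula for free formal power series in $dn^2$ non-commuting variables to each $f_{a,b}$, and then invoke Lemma \ref{lem:radiusinequality} to bound the resulting sum.

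First, I would observe that $f_{a,b}$ is a free FPS in the $dn^2$ non-commuting variables $\fz_{i,j,k}$, $1\leq i,j\leq n$, $1\leq k\leq d$, whose coefficients are indexed by triples $(\alpha,\beta,\om)$ satisfying $|\alpha|=|\beta|=|\om|$. Reading off the coefficient of the $(dn^2)$--letter word of length $\ell$ corresponding to $(\alpha,\beta,\om)$ from Equation (\ref{evalform}), this coefficient is precisely $\hat{f}_{\alpha a,\beta b;\om}$. Popescu's Cauchy--Hadamard formula then gives
\begin{equation*}
\frac{1}{R_{f_{a,b}}} \;=\; \limsup_{\ell\to\infty} \sqrt[2\ell]{\sum_{\substack{\alpha,\beta\in\Fn,\; \om\in\F \\ |\alpha|=|\beta|=|\om|=\ell}} |\hat{f}_{\alpha a,\beta b;\om}|^2}.
\end{equation*}

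Next, I would enlarge the sum on the right hand side by dropping the constraint that $\alpha a$ and $\beta b$ end in the specific letters $a$ and $b$. Writing $\alpha' := \alpha a$ and $\beta' := \beta b$, the enlarged sum ranges over all $\alpha',\beta'\in\Fn$ with $|\alpha'|=|\beta'|=\ell+1$:
\begin{equation*}
\sum_{\substack{|\alpha|=|\beta|=|\om|=\ell}} |\hat{f}_{\alpha a,\beta b;\om}|^2 \;\leq\; \sum_{\substack{|\alpha'|=|\beta'|=\ell+1 \\ |\om|=\ell}} |\hat{f}_{\alpha',\beta';\om}|^2.
\end{equation*}
Lemma \ref{lem:radiusinequality} bounds this latter sum by $C^{2\ell}\|f_\ell\|_{\mathrm{CB}}^{2}$ for a constant $C>0$ independent of $\ell$. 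Taking the $2\ell$-th root and then $\limsup$ yields
\begin{equation*}
\frac{1}{R_{f_{a,b}}} \;\leq\; C\cdot \limsup_{\ell\to\infty} \sqrt[\ell]{\|f_\ell\|_{\mathrm{CB}}} \;=\; \frac{C}{R_f^Y} \;<\; +\infty,
\end{equation*}
where the final inequality uses that $f$ is uniformly analytic in a uniformly open neighbourhood of $Y$, so its Taylor--Taylor series has positive radius of convergence $R_f^Y$. Hence $R_{f_{a,b}}\geq R_f^Y/C > 0$, as required.

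All the substantial analytic content has already been absorbed into Lemma \ref{lem:radiusinequality}, so there is no real obstacle here; the proof is essentially a bookkeeping reduction. The only care required is matching the indexing conventions between the matricial Fock space coefficients $\hat{f}_{\alpha,\beta;\om}$ (which have $|\alpha|=|\beta|=|\om|+1$) and the scalar free FPS coefficients of $f_{a,b}$ (where one letter of $\alpha$ and $\beta$ gets absorbed into the subscripts $a,b$, so that $|\alpha|=|\beta|=|\om|$).
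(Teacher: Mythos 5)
Your proof is correct and follows essentially the same route as the paper: apply the (Popescu) Cauchy--Hadamard formula to each $f_{a,b}$, enlarge the coefficient sum by dropping the constraint on the final letters of $\alpha$ and $\beta$, and then invoke Lemma \ref{lem:radiusinequality} together with $R_f^Y>0$. The indexing remark at the end matches the paper's bookkeeping as well.
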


\begin{proof}
    Since $f$ is assumed to be uniformly analytic in a uniformly open neighborhood of $Y$, as discussed in Subsection \ref{realTT}, the radius of convergence of its Taylor-Taylor series at $Y$ will be positive, i.e.
    \begin{align*}
        \frac{1}{R_f ^Y}=\limsup_{\ell\to\infty}  \sqrt[\ell]{\|f_\ell\|_\mathrm{CB}}<\infty.
    \end{align*}
    Fix $1\leq a,b\leq n$, Lemma \ref{lem:radiusinequality} then yields
\ba \frac{1}{R_{a,b}} & =&\limsup_{\ell\to\infty} \sqrt[2\ell]{\sum _{\substack{\alpha', \beta' \in \Fn; \ \om \in \F \\ |\alpha' | = |\beta' | = |\om |=\ell }} |\hat{f} _{\alpha' a, \beta' b ; \om}|^2} \\ & \leq & \limsup_{\ell\to\infty} \sqrt[2\ell]{\sum_{\substack{\alpha, \beta \in \Fn; \ \om \in \F \\ |\alpha | = | \beta | = \ell+1 = |\om | +1}} |\hat{f}_{\alpha, \beta ;\om}|^2}\\
        & \leq& C \limsup_{\ell\to \infty} \sqrt[\ell]{\|f_\ell\|_\mathrm{CB}}<\infty,
    \ea
    where $R_{a,b}$ denotes the radius of convergence of $f_{a,b}$ at $0 \in \C ^{dn^2} \subset \C ^{(\N \times \N) \cdot dn^2}$. This completes the proof.
\end{proof}

Finally, we can now show that $f$ has a matrix-centre realization at $Y$.

\begin{thm}
    Let $f$ be an NC function that is uniformly analytic in a uniformly open neighborhood of the matrix point $Y$. Then $f$ has a matrix-centre realization at $Y$.
\end{thm}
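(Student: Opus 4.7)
I would reduce to the scalar-centre realization theorem \cite[Lemma 3.2]{AMS-opreal} applied to each component $f_{a,b}$ of the formal decomposition (\ref{formalseries1}), and then assemble the resulting $n^2$ realizations into a single matrix-centre realization of $f$ at $Y$.

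By Lemma \ref{lem:radiuspositive}, each $f_{a,b}\in \C\langle\!\langle \fz_1,\ldots,\fz_{dn^2}\rangle\!\rangle$ has positive radius of convergence at $0$. Applying \cite[Lemma 3.2]{AMS-opreal} therefore produces, for each $1\leq a,b\leq n$, a realization $f_{a,b}\sim_0 (T^{(a,b)},u^{(a,b)},v^{(a,b)})$ on some Hilbert space $\cH_{a,b}$ whose scalar coefficients recover the Taylor--Taylor coefficients of $f$ via $\hat f_{\alpha a,\beta b;\omega}=(u^{(a,b)})^* T^{(a,b)}_{a_0,b_0,w_1}\cdots T^{(a,b)}_{a_{\ell-1},b_{\ell-1},w_\ell} v^{(a,b)}$ for all $\alpha=a_0\cdots a_{\ell-1},\ \beta=b_0\cdots b_{\ell-1}\in \Fn$ and $\omega=w_1\cdots w_\ell\in\F$.

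To assemble the matrix-centre realization $(A,b,c)_Y$, take $\cH:=\C^n\otimes \bigoplus_{a,b=1}^n \cH_{a,b}$. For $H\in\C^{n\times n}$ and $1\leq k\leq d$, define $A_k(H)\in\scr{B}(\cH)$ on a pure tensor $e_{i'}\otimes \xi$ with $\xi$ in the $(a,b)$-summand of $\bigoplus_{a,b}\cH_{a,b}$ by $A_k(H)(e_{i'}\otimes \xi):=\sum_{i,j=1}^n H_{j,i'}\, e_i\otimes T^{(a,b)}_{i,j,k}\xi$. Define $c:\C^n\to \cH$ by $c(e_q):=\sum_{a=1}^n e_a\otimes v^{(a,q)}$ and $b:\C^n\to\cH$ by $b(e_p):=e_p\otimes \sum_{a,b=1}^n u^{(a,b)}$, with $v^{(a,q)}$ and $u^{(a,b)}$ embedded in their respective summands. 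Because $\cdn$ is finite-dimensional, each $A_k$ is automatically completely bounded by \cite[Theorem 18]{Paulsen-cbmaps}, so $L_A$ has a nonempty uniformly open invertibility domain around $Y$.

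The core of the proof is then a direct coefficient comparison. Unwinding the definition of $A_k$, each successive application of $A_{w_s}(H_{w_s})$ to the intermediate vector shifts the label in the auxiliary $\C^n$-factor in lock-step with the way matrix multiplication by $H_{w_s}$ threads the indices of the matrix units $E_{a_{s-1},b_{s-1}}$ in the Taylor--Taylor expansion of $f_\omega(H_{w_1},\ldots,H_{w_\ell})$ derived in Subsection \ref{ss:realTT}. The choice of $c$ places the ``last-letter'' index $a_\ell$ as both the starting $\C^n$-label and the label selecting the summand $\cH_{a_\ell,q}$, so that the realization of $f_{a_\ell,q}$ activates; the choice of $b$ then reads off the ``first-letter'' index $p$. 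Summing over $a_\ell$ reconstructs $[f_\omega(H_{w_1},\ldots,H_{w_\ell})]_{p,q}$ exactly, so by uniqueness of Taylor--Taylor coefficients (\cite[Theorem 7.9]{KVV}) the transfer function of $(A,b,c)_Y$ equals $f$ on the intersection of the two functions' uniform neighborhoods of $Y$. The main obstacle is identifying this specific tensor assembly: the auxiliary $\C^n$-factor must simultaneously thread the first letter of $\alpha$ through $b$, encode the matrix multiplications by the $H_{w_s}$'s between successive matrix units, and leave the last letter $a_\ell$ free to be summed as an index that selects the correct summand via $c$. Once this structure is pinned down, the remaining verification is mechanical linear algebra.
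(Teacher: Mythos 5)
Your proposal is correct, but the final step takes a genuinely different route from the paper's. Both you and the paper start from the decomposition of $f$ into the $n^2$ formal power series $f_{a,b}$ of Equation (\ref{formalseries1}) and use Lemma \ref{lem:radiuspositive} to see that each $f_{a,b}$ has positive radius of convergence at $0$. The paper then rescales so that $f_{a,b}^{(r)}\in\mathbb{H}^\infty_{dn^2}\subseteq\mathbb{H}^2_{dn^2}$ (via Proposition \ref{prop:MultiplierHinfty}), re-assembles the rescaled series into an element $f_r$ of the matricial Fock space $\scr{F}_n(\C ^d)$, and reads off a realization from the explicit right-creation-operator construction of Subsection \ref{sec:realforFockspace}. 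You bypass the Fock space machinery entirely: you invoke \cite[Lemma 3.2]{AMS-opreal} to obtain $n^2$ scalar-centre realizations $(T^{(a,b)},u^{(a,b)},v^{(a,b)})$ of the $f_{a,b}$ on unspecified Hilbert spaces $\cH_{a,b}$, and then hand-assemble these into a single matrix-centre realization $(A,b,c)_Y$ on $\C^n\otimes\bigoplus_{a,b}\cH_{a,b}$. I checked your assembly: applying $A_{w_\ell}(G_\ell),\dots,A_{w_1}(G_1)$ in turn to $c(e_q)$ threads the auxiliary $\C^n$-factor through the indices $a_1,\dots,a_{\ell-1}$, accumulates the products $(G_s)_{b_{s-1},a_s}$, keeps the direct-sum label fixed at $(a_\ell,q)$, and leaves $a_0$ as the auxiliary index matched by $b(e_p)$, so that $[b^*A_{w_1}(G_1)\cdots A_{w_\ell}(G_\ell)c]_{p,q}$ does reproduce the $(p,q)$-entry of $f_\omega(G_1,\dots,G_\ell)$; by uniqueness of Taylor--Taylor coefficients the quantized transfer function of $(A,b,c)_Y$ then agrees with $f$ in a uniformly open neighbourhood of $Y$. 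Your route is more elementary in that it needs no matricial Fock space, at the cost of an intricate block bookkeeping; the paper's route produces a canonical Fock-space realization, which is the structure it then reuses to define the ``free Hardy space at $Y$'' in the closing remark of that section, and can be viewed as the special case of your assembly in which each scalar realization is taken to be the Fock-space one, modulo the identification $\scr{F}_n(\C ^d)\simeq\scr{F}(\C^{dn^2})\otimes\C^{n\times n}$.
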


\begin{proof}
    Let $f_{a,b}$, $1\leq a,b\leq n$, be defined as above in Equation (\ref{formalseries1}). By Lemma \ref{lem:radiuspositive} we can find $r>0$ such that the dilation $f_{a,b} ^{(r)}(X)=f_{a,b}(rX)$ will have a radius of convergence strictly larger than $1$ for all $1\leq a,b\leq n$. In particular, $f_{a,b} ^{(r)}\in \mathbb{H}_{dn^2}^\infty$, and hence $f_{a,b}^{(r)}\in \mathbb{H}_{dn^2}^2$ by Proposition \ref{prop:MultiplierHinfty}, for all $1\leq a,b\leq n$. Therefore, the coefficients of $f_{a,b}^{(r)}$ are square-summable, i.e.
    \begin{align*}
        \sum _{\substack{\alpha, \beta \in \Fn; \ \om \in \F \\ |\alpha | = |\beta | = |\om | }} |r^{|\om|}  \hat{f} _{\alpha a, \beta b ; \om}|^2<\infty.
    \end{align*}
    This implies 
    \begin{align*}
        \sum _{\substack{\alpha, \beta \in \Fn; \ \om \in \F \\ |\alpha | = |\beta | = |\om|}} r^{|\om|}\hat{f} _{\alpha a ,\beta b;\om} \;v _{\alpha,\beta ; \om} \in\scr{F}(\C^{dn^2}),
    \end{align*}
    for all $1\leq a,b\leq n$. Hence,
    \begin{align*}
       \sum _{a,b =1} ^n \sum _{\substack{\alpha, \beta \in \Fn; \ \om \in \F \\ |\alpha | = |\beta | = |\om|}} r^{|\om|}\hat{f} _{\alpha a ,\beta b;\om} \;v _{\alpha,\beta ; \om} \otimes E_{a,b}\in\scr{F}(\C^{dn^2})\otimes \C^{n\times n}.
    \end{align*}
    The image of this element under the unitary $U^*:\scr{F}(\C^{dn^2})\otimes \C^{n\times n}\to \scr{F}_n(\C^d)$, see (\ref{defn:Fockspaceunitary}), is given by
    \begin{align*}
         f_r:=\sum _{\substack{\alpha, \beta \in \mathbb{F} ^+ _n, \ \om \in \F \\ |\alpha | = |\beta | = |\om | +1}} r^{|\om|}\hat{f} _{\alpha, \beta; \om} \, E_{\alpha,\beta} \star e_\om\in \scr{F}_n(\C^d).
    \end{align*}
    According to the arguments in Section \ref{sec:realforFockspace}, $f_r$ has a matrix-centre realization $(A',b,c)_Y$, where $A'=(A_1',\cdots,A'_d)$ with $A'_k:\C^{n\times n}\to \scr{B}(\mathcal{H})$, $1\leq k\leq d$, $\mathcal{H}=\C^n\otimes \scr{F}_n(\C^d)$, and 
    \begin{align*}
        A'_k (Z) = \sum _{i,j =1} ^n Z E_{i,j} \otimes R_{i,j;k} ^*.
    \end{align*}
    It is now readily checked that $(A,b,c)_Y$, where $A=(A_1,\cdots,A_d) = \frac{1}{r} A'$,
    \begin{align*}
        A_k (Z) = \frac{1}{r} \sum _{i,j =1} ^n Z E_{i,j} \otimes R_{i,j;k} ^*,
    \end{align*}
    will then be a matrix-centre realization for $f$ at $Y$.
\end{proof}

\begin{cor} \label{anyNCreal}
An NC function is uniformly analytic in a uniformly open neighbourhood of $Y \in \cdn$ if and only if $f$ has a minimal matrix-centre realization at $Y$ that obeys the linearized Lost Abbey conditions at $Y$.
\end{cor}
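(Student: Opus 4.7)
The plan is to derive Corollary \ref{anyNCreal} by combining the theorem that precedes it with two earlier results of the paper, namely the Kalman decomposition (Theorem \ref{Kalman}) and the characterization of NC transfer functions via the linearized Lost Abbey conditions (Theorem \ref{realLAC}). Neither direction requires new machinery.

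For the forward direction, suppose $f$ is an NC function that is uniformly analytic in a uniformly open neighbourhood of $Y \in \cdn$. The theorem immediately preceding the corollary produces some matrix-centre realization $(A,b,c)_Y$ with $f \sim_Y (A,b,c)$. This realization need not be minimal, but applying the Kalman decomposition, Theorem \ref{Kalman}, yields a minimal realization $(A^{(0)}, b_0, c_0)_Y$ which is analytically equivalent to $(A,b,c)_Y$ and therefore also satisfies $f \sim_Y (A^{(0)}, b_0, c_0)$ in a uniformly open neighbourhood of $Y$. Since $f$ is by assumption an NC function and $(A^{(0)}, b_0, c_0)_Y$ is minimal, Theorem \ref{realLAC} forces $(A^{(0)}, b_0, c_0)_Y$ to satisfy the linearized Lost Abbey conditions at $Y$.

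For the reverse direction, suppose $(A,b,c)_Y$ is a minimal matrix-centre realization at $Y$ obeying the linearized Lost Abbey conditions. By Theorem \ref{realLAC}, the quantized transfer function $f \sim_Y (A,b,c)$ is then an NC function. As observed in the remarks following Theorem \ref{realLAC}, $f$ is automatically uniformly bounded (hence uniformly analytic) on the uniform column-ball $r \cdot \B^d_{\N n}(Y)$ of radius $r = \|A\|_{\mathrm{CB;row}}^{-1}$, since the power-series expansion (\ref{realTT}) converges absolutely in operator norm there.

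There is essentially no obstacle: all the hard work has already been done in the theorem just proved (existence of a matrix-centre realization for any uniformly analytic NC function at $Y$), in Theorem \ref{Kalman} (reduction to a minimal realization via compression to the minimal subspace), and in Theorem \ref{realLAC} (the LLAC$(Y)$ characterization). The corollary is a formal synthesis of these three results, and the only care needed is to invoke Theorem \ref{Kalman} with the correct preservation of analytic equivalence at $Y$ in a uniformly open neighbourhood, which is exactly the content of its statement.
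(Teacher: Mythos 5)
Your proof is correct and takes essentially the same route as the paper's: invoke the existence theorem for a matrix-centre realization, apply the Kalman decomposition (Theorem \ref{Kalman}) to obtain a minimal one, and then apply both directions of the equivalence in Theorem \ref{realLAC} to conclude.
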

\begin{proof}
By the previous theorem, if $f$ is uniformly analytic in a uniformly open neighbourhood of $Y$, then $f \sim _Y (\hat{A},\hat{b},\hat{c})$ has a matrix-centre realization at $Y$. By the Kalman decomposition, $f$ then has a minimal matrix-centre realization at $Y$, $f \sim (A,b,c)_Y$. By Theorem \ref{realLAC}, $(A,b,c)_Y$ obeys the linearized Lost Abbey conditions at $Y$. Conversely, if $f \sim _Y (A,b,c)$ has a minimal matrix-centre realization at $Y$ that obeys the linearized Lost Abbey conditions at $Y$, then $f$ is a uniformly analytic NC function in the uniformly open set $r \cdot \B ^d _{\N n} (Y)$ with $r \geq \| A \| _{\mr{CB;row}} ^{-1}$, again by Theorem \ref{realLAC}. 
\end{proof}

\begin{remark}
Given $Y \in \cdn$, consider the matricial Fock space, $\scr{F} _n (\C ^d)$ and consider the set of all $f \in \scr{F} _n (\C ^d)$ that define non-commutative functions via evaluation at $X \in \frac{1}{\sqrt{n}} \B ^d _{\N n } (Y)$. Clearly this is a closed subspace of $\scr{F} _n (\C ^d)$. We can then define the \emph{free Hardy space at $Y$}, $\hardy (Y)$, as the closed subspace consisting of all elements of $\scr{F} _n (\C ^d)$ that define non-commutative functions via evaluation in $\frac{1}{\sqrt{n}} \B ^d _{\N n } (Y)$. Let $P_Y : \scr{F} _n (\C ^d) \rightarrow \hardy (Y)$ denote the orthogonal projection.

Since evaluation at any $X \in \frac{1}{\sqrt{n}} \B ^d _{m n } (Y)$, $m \in \N$, is a bounded linear homomorphism from $\scr{F} _n (\C ^d )$ into $\C ^{mn \times mn}$, it follows that $\hardy (Y)$ is a non-commutative reproducing kernel Hilbert space (NC-RKHS) in the NC uniform row-ball of radius $\sqrt{n} ^{-1}$ centred at $Y$ \cite{NCRKHS}. We plan to develop the theory of $\hardy (Y)$ further in future work. 
\end{remark}

\section{An application} \label{sec:app}

In \cite{KVV-local}, Klep, Vinnikov and Vol\v{c}i\v{c} developed a local theory of non-commutative functions. In particular, they introduced the ring, $\scr{O} _0 ^u$, of \emph{uniformly analytic NC germs at $0$}. That is, given uniformly analytic NC functions $f$ and $g$, one says they are \emph{locally evaluation equivalent at $0$}, $f \sim _0 g$, if their evaluations agree in a uniformly open neighbourhood of $0$. This is an equivalence relation and the $\sim_0$ equivalence classes are then called uniformly analytic NC germs. By \cite[Proposition 5.3]{KVV-local}, $\scr{O} _0 ^u$ is a \emph{semi-free ideal ring} or \emph{semifir}, which is a type of ring that admits a universal localization. Namely, $\scr{O} _0 ^u$, has a \emph{universal skew field of fractions}, $\scr{M} _0 ^u$, the \emph{skew field of uniformly meromorphic NC germs at $0$}. Elements of $\scr{M} _0 ^u$ can be identified with certain ``evaluation" equivalence classes of NC rational expressions composed with elements of $\scr{O} _0 ^u$, see \cite[Section 5.2 and Corollary 5.4]{KVV-local}

A simple observation, \cite[Lemma 3.2]{AMS-opreal}, shows that $f \in \scr{O} _0 ^u$ if and only if it has a realization, $(A,b,c) \in \scr{B} (\cH) ^{1\times d} \times \cH \times \cH$ so that $\scr{O} _0 ^u$ can be identified with $\scr{O} _d ^\scr{B}$, $\scr{B} = \scr{B} (\cH ) ^{1\times d}$, the ring of all (uniformly analytic) NC functions which have realizations (centred at $0$). In general, given a set $\scr{S} (\cH) \subseteq \scr{B} (\cH)$, let $\scr{S} := \scr{S} (\cH) ^{1\times d}$. Further consider the subsets of row $d-$tuples of linear operators, $\scr{F} = \scr{F} (\cH) ^{1 \times d}$, $\scr{C} = \scr{C} (\cH) ^{1\times d}$ and $\scr{T} _p := \scr{T} _p (\cH) ^{1\times d}$ where $\scr{F} (\cH)$ denotes the two-sided ideal of all finite-rank operators on $\cH$, $\scr{C} (\cH)$ denotes the operator-norm closed two-sided ideal of all compact linear operators on $\cH$ and $\scr{T} _p (\cH)$ denotes the Schatten $p-$class operators on $\cH$ for $p \in [1, +\infty)$. The Schatten $p-$classes consist of the compact linear operators, $A$ for which $\mr{tr} \, |A| ^p < +\infty$ and are also (non-closed) two-sided ideals in $\scr{B} (\cH )$.  As observed in \cite[Corollary 6.3, Theorem 6.10]{AMS-opreal}, each of the sets $\scr{O} _0 ^\scr{S}$ of uniformly analytic NC functions with realizations $(A,b,c) \in \scr{S} \times \cH \times \cH$ for $\scr{S} \in \{ \scr{T} _p, \scr{C} \}$ are semifirs and we have the proper inclusions of semifirs,
$$ \ratfps = \scr{O} _d ^\scr{F} \subsetneqq \scr{O} _d ^{\scr{T} _p} \subsetneqq \scr{O} _d ^{\scr{T} _q} \subsetneqq \scr{O} _d ^{\scr{C}} \subsetneqq \scr{O} _d ^\scr{B} = \scr{O} _0 ^u, $$ for $1 \leq p < q < +\infty$. Here, $\ratfps$ denotes the semifir of all NC rational functions that are defined at $0$, and $\fr \in \ratfps$ if and only if it has a finite--dimensional realization. These proper inclusions also hold for the universal skew fields of fractions of these semifirs, 
$$\fskew = \scr{M} _d ^\scr{F} \subsetneqq \scr{M} _d ^{\scr{T} _p} \subsetneqq \scr{M} _d ^{\scr{T} _q} \subsetneqq \scr{M} _d ^{\scr{C}} \subsetneqq \scr{M} _d ^\scr{B} = \scr{M} _0 ^u. $$ The results in this paper readily imply that any NC function in one of these skew fields, $\scr{M} _d ^\scr{S}$, admits matrix-centre realizations that take values in the given class, $\scr{S} (\cH)$, about any point $Y$ in its uniformly open NC domain. 

\begin{cor}
Let $f \in \scr{M} _d ^\scr{S}$, $\scr{S} \in \{ \scr{F}, \scr{T} _p, \scr{C}, \scr{B} \}$, If $Y \in \mr{Dom} _n \, f \subseteq \cdn$, then $f$ has a matrix-centre realization at $Y$, $f \sim _Y (A,b,c)$ that obeys $A_j : \C ^{n\times n} \rightarrow \scr{S} (\cH)$. 
\end{cor}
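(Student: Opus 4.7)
The plan is to express $f$ as an NC rational expression in germs belonging to $\scr{O}_d^\scr{S}$, translate a realization at $0$ of each such germ into a matrix-centre realization at $Y$ via the formulas of Subsection \ref{ss:mtrans}, and then assemble these using the matrix-centre Fornasini--Marchesini algorithm of Subsection \ref{ss:realalg}. At every stage I would verify that the $A$-component remains in the ideal $\scr{S}(\cH)$.

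Since $\scr{M}_d^\scr{S}$ is the universal skew field of fractions of the semifir $\scr{O}_d^\scr{S}$, by \cite[Theorem 6.10]{AMS-opreal}, there exist germs $g_1,\ldots,g_k\in\scr{O}_d^\scr{S}$ and an NC rational expression $\fr$ such that $f=\fr(g_1,\ldots,g_k)$. Using the identification of $\scr{M}_d^\scr{S}$ with evaluation-equivalence classes of rational expressions over $\scr{O}_d^\scr{S}$ (cf.~\cite[Section 5]{KVV-local}), together with $Y\in\nbdom f$, this representation may be chosen so that every sub-expression of $\fr$ is defined at $Y$ and every inverted sub-expression is invertible there. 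For each $g_i$, pick a realization $(A^{(i)},b^{(i)},c^{(i)})$ at $0$ with $A^{(i)}_j\in\scr{S}(\cH_i)$, and Kalman-reduce via Theorem \ref{Kalman}. Corollary \ref{Kalcompact} (and its analogue for $\scr{S}=\scr{B}$) ensures that the reduced realization has invertibility domain at least as large as the original; we arrange that $Y$ lies in the invertibility domain $\scr{D}^0(A^{(i)})$.

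Now translate each such realization from $0$ to $Y$ using Subsection \ref{ss:mtrans}:
$$\hat A_j^{(i)}(G):=L_{A^{(i)}}(Y)^{-1}\bigl(G\otimes A_j^{(i)}\bigr);\quad G\in\C^{n\times n}.$$
Since each of the classes $\scr{F},\scr{T}_p,\scr{C},\scr{B}$ is a two-sided ideal of $\scr{B}(\cH_i)$ stable under tensoring with $\C^{n\times n}$ in the first factor, we have $G\otimes A_j^{(i)}\in\scr{S}(\C^n\otimes\cH_i)$, and left-multiplication by the bounded operator $L_{A^{(i)}}(Y)^{-1}$ preserves this ideal. Hence $\hat A^{(i)}_j:\C^{n\times n}\to\scr{S}(\C^n\otimes\cH_i)$. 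Passing to an equivalent matrix-centre FM realization via Section \ref{sec:FM} preserves the $\scr{S}$-valued property, as it only restricts $\hat A^{(i)}$ to an invariant subspace.

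Finally, induct on the complexity of $\fr$, applying the matrix-centre FM formulas (\ref{matFMsum}), (\ref{matFMmult}), (\ref{matFMinv}). At each stage the new $A$-component is built from the old by a direct sum $A_j\oplus A'_j$, by a block-triangular matrix with off-diagonal entry $B_j(\cdot)C'$ of rank at most $n$, or by an additive correction $B_j(\cdot)D^{-1}C$ of rank at most $n$. Since each $\scr{S}$ is a two-sided ideal containing $\scr{F}$, every such operation preserves the property of $A_j$ taking values in $\scr{S}$. The resulting matrix-centre FM realization of $f$ at $Y$ therefore has $A_j:\C^{n\times n}\to\scr{S}(\cH)$; converting to a descriptor realization via Section \ref{sec:FM} appends only a finite-rank (hence $\scr{S}$-valued) block to $A$, producing the desired $(A,b,c)_Y$. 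The main obstacle is the first step: ensuring that the rational representation of $f$ may be chosen so that, for each $g_i$, there is a realization at $0$ within the class $\scr{S}$ whose invertibility domain reaches $Y$, and so that every sub-expression of $\fr$ evaluates cleanly at $Y$. This is compatibility between the universal skew field of fractions and evaluation at matrix points, combined with the Kalman reduction of Corollary \ref{Kalcompact}; the infrastructure of \cite[Section 6]{AMS-opreal} and \cite[Section 5]{KVV-local} should suffice, but has to be carried out while simultaneously tracking the ideal $\scr{S}$.
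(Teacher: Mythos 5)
Your proof follows the paper's argument almost step for step: write $f=\fr(g_1,\dots,g_k)$ with $g_i\in\scr{O}_d^\scr{S}$ via \cite[Section 6]{AMS-opreal} and \cite[Subsection 5.2]{KVV-local}, translate each realization of $g_i$ from $0$ to $Y$ using the formulas of Subsection \ref{ss:mtrans}, and assemble via the matrix-centre Fornasini--Marchesini algorithm of Subsection \ref{ss:realalg}, using that each $\scr{S}(\cH)$ is a two-sided ideal closed under finite-rank perturbations so that the $A$-component stays in $\scr{S}$ at every stage, including the final FM-to-descriptor conversion. That part of your bookkeeping is exactly the paper's.

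The one place you deviate is the detour through Kalman reduction and Corollary \ref{Kalcompact} ``and its analogue for $\scr{S}=\scr{B}$'' to ``arrange'' that $Y\in\scr{D}^0(A^{(i)})$. This is both unnecessary and, as stated, unfounded: Corollary \ref{Kalcompact} is specific to realizations with compact values, and no $\scr{B}$-analogue is stated in the paper (nor should one expect $\scr{D}^0(A^{(0)})\supseteq\scr{D}^0(A)$ to hold for general bounded $A$). The paper instead obtains $Y\in\scr{D}^0(A^{(i)})$ directly from the identification of $\mr{Dom}\,f$ as the set on which the evaluations of the representation $\fr(g_1,\dots,g_k)$ make sense: $Y\in\mr{Dom}_n f$ already forces $Y\in\mr{Dom}_n g_i$, and hence $L_{A^{(i)}}(Y)$ invertible, for the constituent germs of a suitable representation. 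You correctly identify this ``first-step obstacle,'' but the resolution is definitional (built into $\scr{M}_d^\scr{S}$ as evaluation-equivalence classes), not a Kalman-domain argument.
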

\begin{proof}
As described in \cite[Section 6]{AMS-opreal}, following \cite[Subsection 5.2]{KVV-local}, elements of $\scr{M} _d ^\scr{S}$ can be identified as NC rational expressions composed with elements of $\scr{O} _d ^\scr{S}$. Namely, if $f \in \scr{M} _d ^\scr{S}$, then there exists $k \in \N$, $\fr \in \fskewk$ and $g_1, \cdots, g_k \in \scr{O} _d ^\scr{S}$ so that $f = \fr (g_1, \cdots, g_k)$. The domain of $f$, is then the largest NC set on which the evaluations of $f$ make sense. By definition, each $g_i \in \scr{O} _d ^\scr{S}$ has a realization (at $0$), $g_i \sim _0 (A,b,c)$, with $A \in \scr{S}$. If $Y \in \mr{Dom} _n \, f$, then necessarily $Y \in \mr{Dom} _n \, g_i$, $1\leq i \leq k$, and by the results of Section \ref{sec:translate} and Theorem \ref{matrixtrans}, we can translate the realizations of $g_i$ at $0$ to obtain realizations of the $g_i$ at $Y \in \cdn$, $g_i \sim _Y (A', b',c')$ which obey $A' _j : \C ^{n \times n} \rightarrow \scr{S} (\cH)$, see Subsection \ref{ss:mtrans}. Each of the sets $\scr{S} (\cH) \in \{ \scr{F} (\cH), \scr{T} _p (\cH), \scr{C} (\cH), \scr{B} (\cH) \}$ are two-sided ideals in $\scr{B} (\cH)$ that are closed under finite--rank perturbations and so it follows that each of the sets $\scr{S}$ are closed under the application of the (FM or descriptor) realization algorithm for sums, products and inverses of NC functions with realizations about $Y$, see Subsection \ref{ss:realalg}. In conclusion, $f \sim _Y (\hat{A}, \hat{b}, \hat{c})$, with $\hat{A} _j : \C ^{n\times n} \rightarrow \scr{S} (\cH)$.  
\end{proof}

\begin{remark}
By Corollary \ref{anyNCreal}, any NC function, $f$, that is uniformly analytic in a uniformly open neighbourhood of a matrix point, $Y \in \cdn$, has a matrix-centre realization at $Y$. This does not mean, however, that $f \in \scr{M} _d ^\scr{B}$, the skew field of uniformly meromorphic germs at $0$. Indeed, \cite[Theorem 7.2]{KVV-local} shows that if $Y$ is a semisimple point one can construct a uniformly analytic NC function, $f$, in a uniformly open neighbourhood of $Y$ whose evaluations take values in nilpotent matrices of any fixed order. (Clearly, such an $f$ cannot belong to any skew field.) On the other hand, any NC function that admits a finite--dimensional matrix-centre realization is necessarily an NC rational function. This raises the open question, posed to us by V. Vinnikov: If an NC function, $f$, has a matrix-centre realization, $(A,b,c)_Y$, taking values in compact linear operators, do we have that $f \in \scr{M} _d ^\scr{C}$? As described in \cite{AMS-opreal}, $\scr{M} _d ^\scr{C}$ is a natural non-commutative and multivariate generalization of the field of meromorphic functions in $\C$. 
\end{remark}

\begin{thm}
Let $(A,b,c)_Y$ and $(A',b',c')_Z$ be two minimal and jointly compact matrix centre realizations about $Y \in \cdm$ and $Z \in \cdn$ that define the same uniformly analytic NC function on some uniformly open NC set. That is, $A_j : \C ^{m \times m} \rightarrow \scr{C} (\cH)$ and $A'_j : \C ^{n\times n} \rightarrow \scr{C} (\cH ')$. Then $\scr{D} ^Y _j (A) = \scr{D} ^Z _k (A')$ for any $j,k \in \N$ so that $jm = kn$. Moreover, given any $X \in \C ^{(jm \times jm) \cdot d} = \C ^{(kn \times kn) \cdot d}$, the resolvent functions, $R _{X \otimes A} (\la )$ and $R_{X \otimes A'} (\la)$, $\la \in \C \sm \{ 0 \}$, have the same poles with the same orders. 
\end{thm}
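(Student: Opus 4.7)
The plan is to translate both realizations to a common matrix-centre via Theorem~\ref{matrixtrans}, extract a pseudo-similarity between the translated realizations from Theorem~\ref{uniqueness}, upgrade this via the Riesz projection calculus on compact operators to a matching of Jordan structures, and finally transfer this spectral data back to the original pencils by recognising the translation pre-multiplier as a left-equivalence of operator pencils.

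Since $f$ is uniformly analytic on a non-empty uniformly open NC set $\Omega\subseteq\scr{D}^Y(A)\cap\scr{D}^Z(A')$ closed under direct sums, fix $W\in\Omega$ at some level $L$ divisible by both $m$ and $n$; set $l_1=L/m$ and $l_2=L/n$. By Theorem~\ref{matrixtrans}, translating the two realizations to $W$ produces minimal matrix-centre realizations $(\hat A,\hat b,\hat c)_W$ and $(\hat A',\hat b',\hat c')_W$ of the same $f$, with $\scr{D}^W_l(\hat A)=\scr{D}^Y_{ll_1}(A)$ and $\scr{D}^W_l(\hat A')=\scr{D}^Z_{ll_2}(A')$; because the translation formula composes $(\mr{id}\otimes A_i)$ with the bounded invertible operator $L_A(W-I_{l_1}\otimes Y)^{-1}$, both $\hat A$ and $\hat A'$ remain compact-valued. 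Theorem~\ref{uniqueness} then supplies a pseudo-similarity $S\colon\nbdom S\subseteq\C^{l_1}\otimes\cH\to\C^{l_2}\otimes\cH'$ whose ampliation $I_l\otimes S$ is itself a pseudo-similarity intertwining $\hat A(G)$ and $\hat A'(G)$ on the dense, $\hat A(G)$-invariant controllable subspace for every $G\in\C^{(lL\times lL)\cdot d}$.

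Since $\hat A(G)$ and $\hat A'(G)$ are compact, the standard Riesz-projection argument upgrades the intertwining to matching Jordan structure on every non-zero generalized eigenspace: closure of the intertwining extends it to resolvents $(I_l\otimes S)(\la I-\hat A(G))^{-1}=(\la I-\hat A'(G))^{-1}(I_l\otimes S)$ on $\nbdom(I_l\otimes S)$; integrating around a small contour enclosing a non-zero $\mu$ gives $(I_l\otimes S)P_\mu=P'_\mu(I_l\otimes S)$ for the finite-rank Riesz projections, and the density of $P_\mu(\nbdom(I_l\otimes S))$ in the finite-dimensional generalized eigenspace $V_\mu$ forces $V_\mu\subseteq\nbdom(I_l\otimes S)$, with $S|_{V_\mu}$ an injective linear map into $V'_\mu$. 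The symmetric argument through the pseudo-similarity $S^{-1}$ gives $\dim V_\mu=\dim V'_\mu$ and so makes $S|_{V_\mu}$ a bounded similarity conjugating the nilpotent restrictions of $\hat A(G)-\mu I$ and $\hat A'(G)-\mu I$. Applied to the affine family $G=\la X-I_l\otimes W$, this shows that the meromorphic operator-valued functions $L_{\hat A}(\la X-I_l\otimes W)^{-1}$ and $L_{\hat A'}(\la X-I_l\otimes W)^{-1}$ have identical pole locations and pole orders in $\la$.

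It then suffices to transfer this pole-structure identity to the original pencils. A direct calculation from the translation formula together with the linearity of $A$ yields the identity $L_{\hat A}(\la X-I_l\otimes W)=(I_l\otimes L_A(W-I_{l_1}\otimes Y)^{-1})\cdot L_A(\la X-I_{ll_1}\otimes Y)$, and likewise for the primed data. Left-multiplication of an operator pencil by a bounded invertible operator is a left-equivalence of pencils and preserves the locations and orders of the poles of its inverse, so the pole-structure matching transfers to the original pencils $L_A(\la X-I_{ll_1}\otimes Y)^{-1}$ and $L_{A'}(\la X-I_{ll_2}\otimes Z)^{-1}$. Setting $\la=1$ specialises this to $\scr{D}^Y_{ll_1}(A)=\scr{D}^Z_{ll_2}(A')$, and replacing $X$ by its direct-sum power $X^{\oplus s}$ (so that $L_A(X^{\oplus s}-I_{sj}\otimes Y)=L_A(X-I_j\otimes Y)^{\oplus s}$) extends both claims to every pair $(j,k)$ with $jm=kn$. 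I expect the main technical obstacle to be the Riesz-projection step, specifically verifying that the generalized eigenspaces lie inside the domain of the ampliated, generally unbounded $I_l\otimes S$ and that the resulting finite-dimensional similarity preserves the full Jordan structure, not merely the algebraic multiplicity.
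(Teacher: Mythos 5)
Your plan is a genuinely different route from the paper, and it contains a real gap at exactly the point you flag as the ``main technical obstacle.'' The paper avoids the pseudo-similarity entirely: it works with the matrix-valued meromorphic functions $g_{p,q}(z) := I_m\otimes b^* p(A)[\vec{G}]\,R_{A(X-Y)}(z)\,q(A)[\vec{H}]\, I_m\otimes c$, shows by an identity-theorem argument (using that $\scr{D}^Y(A)\cap\scr{D}^Z(A')$ is open, dense, and path-connected, via the Zariski-density result from \cite{AMS-opreal}) that $g_{p,q}=g'_{p,q}$, and then uses the Riesz--Dunford calculus on these \emph{scalar-valued} quantities together with minimality and the Laurent expansion of the resolvent of a compact operator to match pole orders. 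In your approach, the pseudo-similarity $S$ from Theorem~\ref{uniqueness} is closed and satisfies $S A(G)\subseteq A'(G)S$ (this part is fine: closedness plus the core relation gives it), but that inclusion does \emph{not} imply the resolvent inclusion $S(\la - A(G))^{-1}\subseteq(\la-A'(G))^{-1}S$. The obstruction is that $(\la - A(G))^{-1}$ need not map $\nbdom S$ into $\nbdom S$: from $S A\subseteq A'S$ one only gets $(\la - A)(\nbdom S)\subseteq\nbdom S$, not surjectivity onto $\nbdom S$, and the core $\nbran \C\langle A\rangle c$ is not obviously preserved by $(\la-A)^{-1}$ either. Without this resolvent inclusion the contour-integral step that produces $P'_\mu S\subseteq S P_\mu$ does not go through, so the Jordan-structure matching is not established. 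The paper's formulation sidesteps the issue precisely because the contour integral is applied inside a bounded-operator sandwich $b^* p(A)[\vec{G}]\,(\cdot)\,q(A)[\vec{H}]\,c$, after which only \emph{minimality} (not an intertwining operator) is invoked to conclude.

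There is also a secondary, smaller gap: you pick $W\in\Omega$ at some level $L$ with $l_1 = L/m$, $l_2 = L/n$, and after translation and direct-sum powers you control $\scr{D}^Y_{l\,l_1}(A) = \scr{D}^Z_{l\,l_2}(A')$ for $l\in\N$. This covers only those pairs $(j,k)$ with $jm=kn$ where $l_1\mid j$ (equivalently $l_2\mid k$); if $\Omega$ happens to contain points only at levels that are non-minimal common multiples of $m$ and $n$, the smallest pairs are missed. The paper handles this by picking the common centre $Y'$ directly inside $\scr{D}^Y_j(A)\cap\scr{D}^Z_k(A')$ for \emph{arbitrary} $j,k$ with $jm=kn$, relying on the fact (from \cite{AMS-opreal}) that each $\scr{D}^Y_j(A)$ is open and dense at every level — a fact you would also need to invoke to close this gap. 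Your translation and left-equivalence observations at the end are correct in spirit and match the paper's Theorem~\ref{matrixtrans}.
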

In the above, if $T \in \scr{B} (\cH)$ is any linear operator, $R_T (\la) :=(\la I_\cH - T) ^{-1}$, $\la \in \C$.
\begin{proof}
By \cite[Subsection 6.1 and Theorem 6.8]{AMS-opreal}, $\scr{D} ^Y _j (A)$ and $\scr{D} ^Z _k (A')$ are both analytic Zariski dense and open sets, and hence are matrix-norm open, dense and connected. In particular, they have non-trivial intersection. If we fix a point $Y' \in \scr{D} ^Y _j (A) \cap \scr{D} ^Z _k (A')$, we can translate both realizations to $Y'$ to obtain two minimal, analytically equivalent and compact realizations at $Y'$ using the results of Section \ref{sec:translate}. That is, we can assume, without loss of generality that $Z=Y \in \cdn$, and that $(A,b,c) \sim _Y (A',b',c')$ are analytically equivalent at $Y$, \emph{i.e.} they define the same uniformly analytic NC function, $f$, in a uniformly open neighbourhood of $Y \in \cdn$. 

Given any free polynomials, $p,q \in \fp$ and $X \in \scr{D} _m ^Y (A) \cap \scr{D} _m ^Y (A')$ consider 
$$ f_{p,q} (X) := I_m \otimes b^* q(A)[\vec{G}] L_A (X - I_m \otimes Y) ^{-1} p(A)[\vec{H}] I_m \otimes c, $$
and $$ f_{p,q} ' (X) = I_m \otimes b^{'*} q(A')[\vec{G}] L_{A'} (X - I_m \otimes Y) ^{-1} p(A')[\vec{H}] I_m \otimes c'. $$ We claim that $f_{p,q} = f ' _{p,q}$. Indeed, replacing $X$ by $X(\la) := \la (X - I_m \otimes Y) + I_m \otimes Y$ for $\la \in \C$ of sufficiently small modulus, $X(\la)$ will belong to a column-ball centred at $Y$ of arbitrarily small radius so that $X (\la) \in \scr{D} _m ^Y (A) \cap \scr{D} _m ^Y (A')$. Then, by assumption, $f_{p,q} (X(\la)) = f_{p,q} ' (X(\la))$. However, 
$$  A (X (\la) - I_m \otimes Y) = \la A (X - I_m \otimes Y), $$ and $A(X - I_m \otimes Y)$ takes values in compact operators, and similarly so does $A' (X - I_m \otimes Y)$. Hence, $\scr{D} ^Y _m (A) \cap \scr{D} ^Y _m (A')$ is open and path-connected, and it follows from the identity theorem that $f_{p,q} (X) = f_{p,q} ' (X)$. 

Fix an arbitrary $X \in \C ^{(mn \times mn) \cdot d}$, let $X(\la) := \la (X - I_m \otimes Y) + I_m \otimes Y$, as before, for $\la \in \C$, and consider
$$ L_A (X (\la) - I_m \otimes Y) = I_m \otimes I_\cH - \la A(X- I_m \otimes Y), $$ as well as $L_{A'} (X(\la) - I_m \otimes Y)$. Then, if we set $z = \la ^{-1}$, 
$$ R_{A (X-I_m \otimes Y)} (z) = (z I_m \otimes I_\cH - A (X - I_m \otimes Y) ) ^{-1} = \la L_A (X(\la) - I_m \otimes Y) ^{-1}. $$ Since $A (X - I_m \otimes Y)$ and $A' (X-I_m \otimes Y)$ are compact, $R(z):= R_{A(X-I_m\otimes Y)} (z)$ and $R' (z) := R_{A' (X-I_m \otimes Y)} (z)$ are meromorphic operator-valued functions in $\C \sm \{ 0 \}$. 

Hence, if $X \notin \scr{D} _m ^Y (A)$, then $z=1$ is a pole of finite order for $R(z)$. 
Let $\ell, m \in \N \cup \{ 0 \}$ be the orders of the poles of $R(z)$ and $R'(z)$ at $z =1$. For any free polynomials, $p,q$, consider the matrix-valued meromorphic function on $\C \sm \{ 0 \}$, 
\ba g_{p,q} (z) & := & z^{-1} f_{p,q}(X(z^{-1})) = I_m \otimes b^* p(A)[\vec{G}] R(z) q(A) [\vec{H}] I_m \otimes c \\
& = & I_m \otimes b^{'*} p(A')[\vec{G}] R'(z) q(A') [\vec{H}] I_m \otimes c'. \ea 
Then, since $g_{p,q}$ has a pole of order at most $\ell$ at $z=1$, for sufficiently small $r>0$, 
$$ 0 = \cint _{r\cdot \partial \D (1)} (z-1) ^\ell g_{p,q} (z) dz, $$ and then by the Riesz--Dunford holomorphic functional calculus, 
\ba 0 & = & \frac{1}{2\pi i} \cint _{r\cdot \partial \D (1)} (1-z) ^\ell g_{p,q} (z) dz \\
& = & I_m \otimes b^* p(A) [\vec{G}] \, \underbrace{\frac{1}{2\pi i} \cint _{r\cdot \partial \D (1)} (z-1) R _{A (X - I_m \otimes Y)} (z) dz}_{(I_m \otimes I_\cH - A(X-I_m \otimes Y) ) ^\ell E_{1} (A (X-I_m\otimes Y)) }  \, q(A) [\vec{H} ] I_m \otimes c \\
& = & I_m \otimes b^{'*} p(A') [\vec{G}] \, (I_m \otimes I_\cH - A'(X-I_m \otimes Y) ) ^\ell E_{1} (A' (X - I_m \otimes Y))  \, q(A') [\vec{H} ] I_m \otimes c', \ea for any tuples of $mn \times mn$ matrices, $\vec{G}, \vec{H}$. In the above, $E_{1} (A (X-I_m\otimes Y))$ denotes the Riesz idempotent corresponding to the isolated eigenvalue, $\la =1$, of the compact linear operator $A (X-I_m \otimes Y)$. That is, $E_1 (A (X-I_m\otimes Y))$ is defined, via the holomorphic functional calculus, by the characteristic function of an open neighbourhood, $\Om$, of $1 \in \C$, so that $\sigma (A (X-I_m\otimes Y)) \cap \Om = \{ 1 \}$. 

By minimality, 
$$ 0 = (I_m \otimes I_\cH - A'(X-I_m \otimes Y) ) ^\ell E_{1} (A' (X-I_m\otimes Y)), $$ and this proves that $m \leq \ell$, by the Laurent series formula for the resolvent function of a compact operator, see \cite[Theorem 5.2]{AMS-opreal}. By symmetry, we obtain $\ell =m$. In particular, if $\ell =0$ so that $X \in \scr{D} ^Y (A)$, then $X \in \scr{D} ^Y (A')$, and we conclude that $\scr{D} ^Y (A) = \scr{D} ^Y (A')$. 
\end{proof}

Recall that by \cite[Section 6.1 and Theorem 6.4]{AMS-opreal}, any $f \in \scr{M} _d ^\scr{C}$ can be viewed as an ``evaluation" equivalence classes of NC rational expressions composed with elements of $\scr{O} _d ^{\scr{C}}$. The above theorem shows that given any $f \in \scr{M} _d ^\scr{C}$, so that $F := \fr \circ (g_1, \cdots, g_k) \in f$, $\fr \in \fskewk$ and $g_i \in \scr{O} _d ^\scr{C}$, \emph{i.e.} $F \in \fskewk \circ \scr{O} _d ^\scr{C}$, that if $Y \in \mr{Dom} _m \, F \subseteq \mr{Dom} _m \, f$, where $m \in \N$ is minimal, then we can construct a minimal, matrix-centre realization of $F$ at $Y$, $F \sim (A,b,c) _Y$, with the $A_j$ taking values in compact operators, and then 
$$ \nbdom F = \nbdom f = \scr{D} ^Y (A). $$

\setstretch{0.9}
\setlength{\parskip}{0pt}
\setlength{\itemsep}{0pt}

\end{document}